\newtheorem{Lemma}{Lemma}[section]
\newtheorem{example}{Example}[section]
\newtheorem{remark}{Remark}[section]
\newtheorem{Corollary}{Corollary}[section]
\numberwithin{equation}{section}
\newcommand{\al}{\alpha}
\def\Dal{{\partial_t^\al}}
\def\Om{\Omega}
\def\II{(\Om)}
\def\d{{\mathrm d}}
\begin{document}

\title{Time-Fractional Allen-Cahn Equations: Analysis and Numerical Methods\thanks{This research is supported in part by the ARO MURI Grant W911NF-15-1-0562 and NSF DMS-1719699.  The work of J. Yang is partially supported by supported  by National Science Foundation of China NSFC-11871264. The work of Z. Zhou is supported by the start-up grant from the Hong Kong Polytechnic University and Hong Kong RGC
grant No. 25300818.}}

\author{Qiang Du\thanks{Department of Applied Physics and Applied Mathematics,
Columbia University, New York, NY 10027, {\tt qd2125@columbia.edu}}
\and
Jiang Yang\thanks{Department of Mathematics, Southern University of Science and Technology,
Shenzhen, China, {\tt yangj7@sustech.edu.cn}}
\and
Zhi Zhou\thanks{Department of Applied Mathematics, The Hong Kong Polytechnic University, Kowloon, Hong Kong
{\tt zhizhou@polyu.edu.hk, zhizhou0125@gmail.com}}
}

\maketitle
%\author{Qiang Du}
%\address{Department of Applied Physics and Applied Mathematics, Columbia University,  New York 10027, USA}
%\email{qd2125@columbia.edu}
%\author{Jiang Yang}
%\address{Department of Mathematics, Southern University of Science and Technology, Shenzhen, China}
%\email{yangj7@sustech.edu.cn}
%\author{Zhi Zhou}
%\address{Department of Applied Mathematics, The Hong Kong Polytechnic University, Kowloon, Hong Kong}
%\email{zhizhou@polyu.edu.hk, zhizhou0125@gmail.com}

\begin{abstract}
In this work, we consider a time-fractional Allen-Cahn equation, where the conventional first order time derivative is replaced by a Caputo fractional derivative
with order $\alpha\in(0,1)$.
First, the well-posedness and (limited) smoothing property are systematically analyzed, by using the maximal $L^p$ regularity of fractional evolution equations
and the fractional Gr\"onwall's inequality. We  also show the maximum principle like their conventional local-in-time counterpart. Precisely, the time-fractional equation preserves the
property that the solution only takes value  between the wells of the double-well potential when the initial
data does the same.
Second, after discretizing the fractional derivative by backward Euler convolution quadrature,
we develop several unconditionally solvable and stable time stepping schemes, i.e., convex splitting scheme, weighted convex splitting scheme and linear weighted stabilized scheme.
Meanwhile, we study the discrete energy dissipation property (in a weighted average sense), which is important for gradient flow type models, for the two weighted schemes.
Finally, by using a discrete version of fractional Gr\"onwall's inequality
and maximal $\ell^p$ regularity, we prove that the convergence rates of those time-stepping schemes are $O(\tau^\alpha)$ without any extra regularity assumption on the solution.
We also present extensive numerical results to support our theoretical findings and to offer new
insight on the time-fractional Allen-Cahn dynamics.
\end{abstract}

\begin{keywords}
time-fractional Allen-Cahn, regularity, time stepping scheme, energy dissipation, error estimate
\end{keywords}

\begin{AMS}
%AMS subject classifications 2010:
65M70,\ 65R20
\end{AMS}

\section{Introduction}\label{sec:intro}
Classical phase-field models are diffuse interface models that have found numerous applications
in diverse research areas, e.g., hydrodynamics \cite{Anderson:1998, Liu:2003, Qian:2003},
material sciences \cite{Allen:1979, Cahn:1958}, biology \cite{Du:2004, Shao:2010, Wise:2008}
and image processing \cite{Paragios:2001, Xu:1998}, to name just a few.
Recently, there are also many studies on nonlocal phase field models involving spatially nonlocal interactions
\cite{Akagi:2016,Antil:2017,Bates:2006,Bates:2009,DLLZ:2018,DLLZ:2019,Guan:2014,Gui:2015,Song:2017,Valdinoci:2013}, 
see \cite{Du:2019,DF:2019} for more extensive reviews of the literature. Historically,
nonlocal interactions in phase field models expressed mathematically in terms of integral operators have been noted
in the work of van der Waals \cite{Waals:1894}, see discussions made in \cite{Pismen:2001}.  
In fact, one can deduce the usual differential equation form of the local phase field energy from the nonlocal version via the so-called 
Landau expansion \cite{Landau:2013}, under the usual assumption on the smooth and slowly varying nature of the phase field variables.
Meanwhile,  it has been reported that the presence of nonlocal operators in  either time \cite{ChenZhang:2018, Liu:2018, Tang:2018} or space \cite{Ainthworth:2017, Caffarelli:2010, DuYang:2016, Song:2017} in phase field equations  may change diffusive dynamics significantly.

In this paper, we shall focus on the following time-fractional Allen-Cahn equation \cite{Liu:2018, Tang:2018}: % whose associate governing equation is given by
\begin{align}\label{eqn:phase-field}
\left\{\begin{aligned}
\partial_t^\alpha u - \kappa^2 \Delta u &= -F'(u)=:f(u)
&&\mbox{in}\,\,\,\Omega\times(0,T) ,\\
u(x,t)&=0  &&\mbox{in}\,\,\,\partial\Omega\times(0,T),\\
u(x,0)&=u_0  &&\mbox{in}\,\,\,\Omega,\\
\end{aligned}\right.
\end{align}
where the function $F(u)$ is bistable, e.g.,$$F(s)=\frac14(1-s^2)^2.$$
%\red{QD: any reason not to say time-fractional in title and abstract? anyway, the term nonlocal-in-time should
%be reserved for more general cases than time-fractional}
Here $\Omega$ is a smooth domain of $\mathbb{R}^d$ with $d=1,2,3$ with boundary $\partial \Omega$.
The operator $\partial_t^\alpha $ denotes the Caputo-type fractional derivative of order $\alpha\in(0,1)$ in time,
which is a typical example of nonlocal operators and defined
by \cite[p. 70]{KilbasSrivastavaTrujillo:2006}
\begin{align}\label{eqn:Caputo}
   \Dal u(x,t)= \frac{1}{\Gamma(1-\alpha)}\int_0^t(t-s)^{-\alpha}\partial_s u (x,s)\d s.
\end{align}

Time-fractional PDEs have attracted some attention in the modeling anomalous  diffusion,
e.g., protein diffusion within cells \cite{golding2006physical}, contaminant transport in groundwater \cite{kirchner2000fractal},
and thermal diffusion in media with fractal geometry \cite{nigmatullin1986realization}.
 For the phase-field model \eqref{eqn:phase-field}, some numerical studies have been presented
\cite{Liu:2018}. It was reported there that  the order $\alpha$ affects significantly the relaxation time to the equilibrium
and the sharpness of the interface. Their simulations were based on
a fully discrete scheme for  \eqref{eqn:phase-field} that uses a
 piecewise linear interpolation to discretize the fractional derivative,
and the Fourier spectral method in space.
Meanwhile, a fast algorithm was proposed to reduce the computational complexity and the storage requirement.
However, they did not provide any stability and convergence analysis.
 Very recently, Chen et al. numerically studied
a time-fractional molecular beam epitaxy (MBE) model using a similar fully discrete scheme  \cite{ChenZhang:2018}.
They observed that both of energy decay rate and coarsening rate satisfy a power-law
determined by the fractional order. The first theoretical work regarding the energy stability for the model \eqref{eqn:phase-field} was given by Tang et al. in \cite{Tang:2018}, where
they showed that the energy at a latter time must be bounded by the initial energy, i.e.,
\begin{align}\label{eqn:energy-bound}
E(u(t)) \le E(u_0),\quad \text{where}~~  E(u) = \int_\Omega \frac{\kappa^2}2 |\nabla u(x) |^2   + F(u(x)) \,dx.
\end{align}
In the discrete level, they proposed a stabilized time stepping scheme which keeps this kind of stability.
Note that the original local-in-time integer-order Allen-Cahn is a gradient flow, so the free energy $E(u(t))$ is decreasing.
Nevertheless, the energy dissipation has not been discussed, except for the boundedness of the energy
 \eqref{eqn:energy-bound},
at neither continuous nor discrete level.  Moreover,
none of the aforementioned works study the well-posedness and the regularity.
The sharp error estimate of those time stepping schemes is also unavailable in the literature,
due to the unknown smoothing property of the solution.
Let us also add that a related time-fractional Cahn-Hilliard model was also {numerically} studied in \cite{Liu:2018, Tang:2018}, where the energy dissipation rate
is reported to satisfy a power-law $t^{-\alpha/3}$.
Indeed, in-depth studies of these time-fractional phase field type models remain fairly scarce.

The goal of our work is to provide more extensive studies of nonlocal-in-time phase field models, and to develop and analyze stable numerical schemes.
%The studies on other-types of nonlocal operators are fairly scarce, e.g.,
%the tempered fractional derivative \cite{Sabzikar:2015}, the distributed order
%derivative \cite{Chechkin:2002} or the hyper-singular integral operator with more general kernels \cite{DuYangZhou:2017}.
%There have already existed some numerical studies on the time-fractional phase-field models. %, and most of them are numerical in nature. \red{QD: do you mean numerical studies in the first part of the sentence?}
We shall present a rigorous analysis of the model \eqref{eqn:phase-field} with smooth initial data,
provide sharp regularity estimate, investigate time stepping
schemes, establish the optimal error estimates, and study the discrete energy dissipation law.
%We expect that the argument for the fractional model \eqref{eqn:phase-field} could be extended to other types
%of nonlocal-in-time operators \cite{Chechkin:2002, DuYangZhou:2017, Sabzikar:2015}
%as well as other types of nonlocal phase-filed models \cite{ChenZhang:2018}.
Compared with the classical phase-field equation,
the analysis of the nonlocal one will be more challenging, since the nonlocal
(differential) operator does not preserve some of the rules enjoyed by their local counterpart,
e.g., the product rule and the chain rule.

The first contribution of the paper is to study the well-posedness and regularity of the model \eqref{eqn:phase-field}, in Section \ref{sec:PDE}.
Nonlinear time-fractional diffusion equations with globally Lipschitz continuous potential term has already been investigated in \cite{JinLiZhou:nonlinear}.
However, the loss of global Lipschitz continuity in the time-fractional phase-field equation \eqref{eqn:phase-field} will result in additional troubles.
To overcome this difficulty, we apply the energy argument, the maximal $L^p$ regularity and
fractional Gr\"onwall's inequality, to prove the existence and uniqueness of a weak solution, which satisfies (Theorem \ref{thm:exists})
\begin{equation*}
\|  \partial_t^\alpha u  \|_{L^p(0,T;L^2\II)} +  \|  \Delta u  \|_{L^p(0,T;L^2\II)} \le c,\qquad \text{for any} ~~p\in[2,2/\alpha).
\end{equation*}
provided that $u_0\in H_0^1\II$. Further, the regularity will be improved that $u\in L^\infty((0,T)\times\Omega)$ under the assumption that $u_0\in H^2\II\cap H_0^1\II$.
These results combining with the argument in \cite[Theorem 3.1]{JinLiZhou:nonlinear} result in the sharp regularity estimates.
One application of those regularity estimates is to prove the maximum bound principle
in the sense that if $|u_0|\le 1$ then the solution $u$ shares the same pointwise maximum norm  bound at all time.
This is a  property enjoyed by the conventional local-in-time Allen Cahn equation.
Note that such a  principle has been shown in \cite{Tang:2018}, but it was
under certain a priori regularity assumptions that have not been rigorously confirmed so far.

Our second contribution is to develop some unconditionally solvable and stable schemes in Section \ref{sec:scheme}.
The first approach is motivated by the classical convex splitting method (CS). This scheme satisfies
the discrete maximum norm bound principle. Moreover, it satisfies the energy stability, i.e.,
\begin{equation*}
E(u_n) \le E(u_0) \qquad \text{for all}\quad  n \ge 1.
\end{equation*}
However, the energy dissipation law of the CS scheme is hard to establish due to the nonlocal effect of the fractional derivative.
This motivates us to develop two other schemes, named as
weighted convex splitting (WCS) scheme and linear weighted stabilized (LWS) scheme. Both schemes satisfy the discrete  maximum bound principle
as well as a weighted energy dissipated law, i.e.,
\begin{equation*}
 E(u_n)\le E(u_{n,\alpha}),\qquad \text{for all}\quad n\ge 1,
\end{equation*}
where $u_{n,\alpha}$ is a convex combination of $u_0,u_1,\ldots,u_{n-1}$. In cases where the free energy adopted under
consideration is convex (e.g., the linear problem or time-fractional
Allen-Cahn with $|u_n|\ge \frac{\sqrt3}{3}$), the weighted energy stability indicates the fractional energy dissipation law as
$\bar\partial_\tau^\alpha E(u_n)\le 0$.
This is consistent with the energy decay property of classical gradient flow ($\alpha=1$),
$\bar\partial_\tau E(u_n)\le 0.$
See more discussion on Remark \ref{rem:egy-disc}.

As additional analytical studies, we analyze the convergence of those time-stepping schemes in Section \ref{sec:error}.
The pointwise-in-time errors are derived
by applying the regularity estimates in Section \ref{sec:PDE}, the discrete fractional Gr\"onwall's inequality established in \cite{JinLiZhou:nonlinear}
and discrete maximal $\ell^p$ regularity derived in \cite{JinLiZhou:max-reg},
as well as some novel stability estimates proved in Section \ref{sec:error}.
In particular, under the assumption that $u_0\in H^2\II\cap H_0^1\II$  and $|u_0(x)|\le 1$, we prove that
all those three time stepping solutions satisfy, under no additional regularity assumptions, that
\begin{equation*}
\max_{1\le n\le N}  \|u(t_n)-u_n\|_{L^2(\Omega)}\le c\tau^\alpha .
\end{equation*}
where $c$ denotes a generic constant depending on the $\alpha$, $u_0$, $T$ and $\kappa$, but always independent of $\tau$ and any smoothness of $u$.

Finally, in Section \ref{sec:numerics}, we present some numerical experiments to confirm the theoretical findings and to offer new
insight on the time-fractional Allen-Cahn dynamics.  Numerically, we observe that the relaxation time to the steady state gets longer for smaller $\alpha$,
and the time-fractional Allen--Cahn equation converge to some slow mean curvature flow.
Those phenomena await further theoretical understanding in the future.

Throughout this paper, the notation $c$ denotes a generic constant, which may depends on $\alpha, \kappa, T, u_0$
but is always independent of $u$ and step size $\tau$.

\bigskip
\section{Solution theory of the time-fractional Allen-Cahn equations}\label{sec:PDE}

In this section, we shall study the well-posedness of the time-fractional Allen-Cahn equation \eqref{eqn:phase-field} and prove the sharp regularity.
The argument for nonlinear fractional diffusion equation with globally lipschitz continuous potential has been investigated in \cite{JinLiZhou:nonlinear}.
To overcome the lack of global Lipschitz property of the nonlinear potential $f(s)$, it is important to
show $u\in L^\infty((0,T)\times\Omega)$ in the first place. To this end, we shall use the following fractional Gr\"onwall's inequality.

\begin{Lemma}\label{Frac-Gronwall}
Suppose that $y$ is nonnegative and $y$ satisfies the inequality
\begin{equation*}
\partial_t^\alpha y(t) \le \beta y(t) + \sigma(t),
\end{equation*}
where the function $\sigma\in L^\infty(0,T)$ and the constant $\beta>0$. Then
\begin{equation*}
y(t) \le c_T \big(y(0) + \| \sigma \|_{L^\infty(0,T)}\big).
\end{equation*}
where the constant $C_T$ where the constant $c$ is independent of $\sigma$ and $y$,
but may depend on $\alpha$,  $\beta$ and $T$.
\end{Lemma}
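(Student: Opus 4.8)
\noindent\emph{Proof strategy.}\ The plan is to turn the fractional differential inequality into a linear Volterra integral inequality with weakly singular kernel and then run the classical Gr\"onwall iteration, whose limit is governed by a Mittag--Leffler function. First I would apply the Riemann--Liouville fractional integral of order $\alpha$,
\[
  (I^\alpha g)(t) := \frac{1}{\Gamma(\alpha)}\int_0^t (t-s)^{\alpha-1} g(s)\,\d s ,
\]
to both sides of $\partial_t^\alpha y \le \beta y + \sigma$. Because the kernel $(t-s)^{\alpha-1}$ is nonnegative on $(0,t)$, $I^\alpha$ is order preserving, and since the Caputo derivative satisfies $\partial_t^\alpha = I^{1-\alpha}\tfrac{\d}{\d t}$ one has the identity $I^\alpha\partial_t^\alpha y = y(t)-y(0)$ for absolutely continuous $y$. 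Hence
\[
  y(t) \le y(0) + \frac{\beta}{\Gamma(\alpha)}\int_0^t (t-s)^{\alpha-1} y(s)\,\d s + \frac{1}{\Gamma(\alpha)}\int_0^t (t-s)^{\alpha-1}\sigma(s)\,\d s .
\]
Estimating the last term by $\|\sigma\|_{L^\infty(0,T)}\,T^\alpha/\Gamma(\alpha+1)$ and writing $a := y(0) + T^\alpha\|\sigma\|_{L^\infty(0,T)}/\Gamma(\alpha+1)$ and $b := \beta/\Gamma(\alpha)$, this reads $y(t) \le a + b\int_0^t(t-s)^{\alpha-1}y(s)\,\d s$ on $[0,T]$.

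Next I would iterate this inequality. Writing $B$ for the Volterra operator $\phi\mapsto b\int_0^t(t-s)^{\alpha-1}\phi(s)\,\d s$, repeated substitution gives $y\le \sum_{k=0}^{n-1}B^k a + B^n y$ for every $n\ge1$, and the semigroup property of fractional integration (a Beta-function computation) yields the closed form $(B^k a)(t) = a\,(\beta t^\alpha)^k/\Gamma(k\alpha+1)$. Since $y$ is locally bounded, the remainder obeys $(B^n y)(t) \le \|y\|_{L^\infty(0,T)}\,(\beta T^\alpha)^n/\Gamma(n\alpha+1)\to 0$ as $n\to\infty$, since the Mittag--Leffler series converges for every argument. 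Letting $n\to\infty$ gives $y(t)\le a\sum_{k\ge0}(\beta t^\alpha)^k/\Gamma(k\alpha+1) = a\,E_\alpha(\beta t^\alpha)$, where $E_\alpha$ is the Mittag--Leffler function. Evaluating at $t=T$, where $E_\alpha(\beta t^\alpha)$ is largest on $[0,T]$, and absorbing the harmless factor $\max\{1,T^\alpha/\Gamma(\alpha+1)\}$, one arrives at $y(t)\le c_T(y(0)+\|\sigma\|_{L^\infty(0,T)})$ with $c_T := E_\alpha(\beta T^\alpha)\max\{1,T^\alpha/\Gamma(\alpha+1)\}$, a constant depending only on $\alpha$, $\beta$, $T$.

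An equivalent route, which I would only sketch, is a comparison argument: let $z$ solve the scalar fractional ODE $\partial_t^\alpha z = \beta z + \|\sigma\|_{L^\infty(0,T)}$ with $z(0)=y(0)$, whose explicit Mittag--Leffler representation is bounded on $[0,T]$ by $c_T(y(0)+\|\sigma\|_{L^\infty(0,T)})$, and then deduce $y\le z$ from the same weakly singular Volterra comparison. Either way, the only genuine technical point is the passage from the differential to the integral inequality and the vanishing of the iteration remainder: this needs $y$ to be absolutely continuous (so that the Caputo derivative in \eqref{eqn:Caputo} and the identity $I^\alpha\partial_t^\alpha y = y-y(0)$ are meaningful) and locally bounded on $[0,T]$. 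In the applications of this lemma in Section \ref{sec:PDE} both properties will follow from the regularity already established for the energy-type quantities to which it is applied, so the remaining work is the routine weakly singular Gr\"onwall estimate and I would not belabor it.
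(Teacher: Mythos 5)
Your proof is correct, but your primary argument is genuinely different from the paper's. The paper proceeds by comparison: it defines the auxiliary function $w(t)=E_{\alpha,1}(\beta t^\alpha)y(0)+\int_0^t s^{\alpha-1}E_{\alpha,\alpha}(\beta s^\alpha)\sigma(t-s)\,ds$, observes that $w$ solves the scalar fractional ODE $\partial_t^\alpha w=\beta w+\sigma$ with $w(0)=y(0)$, bounds $w$ using the positivity and boundedness of the Mittag--Leffler functions on $[0,T]$, and then simply invokes the comparison principle $y\le w$ --- which is stated, not proved. Your main route instead converts the differential inequality into the weakly singular Volterra inequality $y\le a+\beta I^\alpha y$ and iterates, showing the remainder $B^n y$ vanishes and summing the series to $a\,E_\alpha(\beta t^\alpha)$; your ``equivalent route'' sketch is essentially the paper's argument. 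What your approach buys is self-containedness: the iteration is precisely the standard proof of the fractional comparison/Gr\"onwall principle that the paper takes for granted, so you have filled in the one step the paper leaves implicit, at the cost of needing $y$ locally bounded and absolutely continuous (hypotheses you correctly flag, and which hold in the paper's applications to energy quantities of the Galerkin approximations). The paper's route is shorter and isolates the Mittag--Leffler structure cleanly, but rests on an unproved comparison lemma; either version yields the same constant $c_T\sim E_\alpha(\beta T^\alpha)$.
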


\begin{proof}
We define an axillary function $w(t)$ such that
\begin{equation*}
w(t) = E_{\alpha,1}(\beta t^\alpha)y(0) + \int_0^t s^{\alpha-1} E_{\alpha,\alpha}(\beta s^\alpha) \sigma(t-s) \,ds
\end{equation*}
where the $E_{a,b}(z)$ denotes the Mittag-Leffler function \cite{KilbasSrivastavaTrujillo:2006}.
Then the function $w(t)$ satisfies the fractional initial value problem
\begin{equation*}
\partial_t^\alpha w(t) = \beta w(t) + \sigma(t),\quad \text{with}~~ w(0)=y(0),
\end{equation*}
Using the positivity and boundedness of the Mittag-Leffler function, we obtain that
\begin{equation*}
w(t) \le  c_1 y(0) + c_2 \int_0^t s^{\alpha-1}  \sigma(t-s) \,ds \le c_T( y(0) +  \| \sigma \|_{L^\infty(0,T)}).
\end{equation*}
and the desired result follows from the comparison principle, i.e., $y(t) \le w(t)$.
\end{proof}

In order to study the well-posedness and regularity, we shall use the Bochner space. For a Banach space $B$, we define
\begin{equation*}
  L^r(0,T;B) = \{u(t)\in B \mbox{ for a.e. } t\in (0,T) \mbox{ and } \|u\|_{L^r(0,T;B)}<\infty\},
\end{equation*}
for any $r\geq 1$, and the norm $\|\cdot\|_{L^r(0,T;B)}$ is defined by
\begin{equation*}
  \|u\|_{L^r(0,T;B)} = \left\{\begin{aligned}\left(\int_0^T\|u(t)\|_B^rdt\right)^{1/r}, &\quad r\in [1,\infty),\\
       \mathrm{esssup}_{t\in(0,T)}\|u(t)\|_B, &\quad r= \infty.
       \end{aligned}\right.
\end{equation*}
Besides, we shall use extensively Bochner-Sobolev spaces $W^{s,p}(0,T;B)$. For any $s\ge 0$ and
$1\le p < \infty$, we denote by $W^{s,p}(0,T;B)$ the space of functions $v:(0,T)\rightarrow B$,
with the norm defined by interpolation. Equivalently, the space is equipped with the quotient norm
\begin{align}\label{quotient-norm}
\|v\|_{W^{s,p}(0,T;B)}:=
\inf_{\widetilde v}\|\widetilde v\|_{W^{s,p}({\mathbb R};B)} ,
\end{align}
where the infimum is taken over all possible extensions $\widetilde v$ that extend $v$ from $(0,T)$ to ${\mathbb R}$.
For any $0<s< 1$ and $1 \le p<\infty$, one can define Sobolev--Slobodecki\v{\i} seminorm $|\cdot|_{W^{s,p}(0,T;L^2(\Omega))}$ by
\begin{equation}\label{eqn:SS-seminorm}
   | v  |_{W^{s,p}(0,T;L^2(\Omega))}^p := \int_0^T\int_0^T \frac{\|v(t)-v(\xi)\|_{B}^p}{|t-\xi|^{1+ps}} \,\d t\d\xi ,
\end{equation}
and the full norm $\|\cdot\|_{W^{s,p}(0,T;B)}$ by
\begin{equation*}
\|v\|_{W^{s,p}(0,T;B)}^p = \|v\|_{L^p(0,T;B)}^p+|v|_{W^{s,p}
(0,T;B)}^p .
\end{equation*}

Now we are ready to prove the well-posedness.
\begin{theorem}\label{thm:exists}
For every $u_0\in H_0^1(\Omega)$ there exists a unique weak solution
$$u \in W^{\alpha,p}(0,T;L^2\II) \cap L^p(0,T;H^2\II \cap H_0^1\II),\qquad \text{for any} ~~p\in[2,2/\alpha).$$
Moreover, if $u_0\in H^2\II \cap H_0^1\II$, the solution
$$u\in L^\infty((0,T)\times\Omega).$$ %Meanwhile, the maximum principle holds that
%\begin{equation}\label{eqn:apriori-1}
% | u(x,t) | \le 1.
%\end{equation}
%provided that $|u_0| < 1$
\end{theorem}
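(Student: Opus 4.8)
The plan is to regularize the cubic nonlinearity, invoke the existing theory for globally Lipschitz potentials, extract $M$-uniform \emph{a priori} bounds, and then pass to the limit. Concretely, I would replace $f(s)=s-s^3$ by a globally Lipschitz truncation $f_M$ that coincides with $f$ on $[-M,M]$ and is extended affinely (with matched derivative) outside, arranged so that $f_M'\le1$ and $f_M(s)s\le s^2$ for all $s\in\mathbb R$. By \cite{JinLiZhou:nonlinear} the truncated problem then has a unique solution $u_M\in W^{\alpha,p}(0,T;L^2\II)\cap L^p(0,T;H^2\II\cap H_0^1\II)$, with enough regularity to justify the energy manipulations below.

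Next I would derive the \emph{a priori} bounds. Testing the $f_M$-equation with $u_M$, using the fractional convexity inequality $\tfrac12\partial_t^\alpha\|u_M\|_{L^2\II}^2\le(\partial_t^\alpha u_M,u_M)$ and $f_M(u_M)u_M\le u_M^2$, gives $\tfrac12\partial_t^\alpha\|u_M\|_{L^2\II}^2+\kappa^2\|\nabla u_M\|_{L^2\II}^2\le\|u_M\|_{L^2\II}^2$, whence Lemma \ref{Frac-Gronwall} yields $\|u_M\|_{L^\infty(0,T;L^2\II)}\le c$; testing with $-\Delta u_M$ and using $f_M'\le1$ gives $\partial_t^\alpha\|\nabla u_M\|_{L^2\II}^2+2\kappa^2\|\Delta u_M\|_{L^2\II}^2\le2\|\nabla u_M\|_{L^2\II}^2$, so (now using $u_0\in H_0^1\II$) $\|u_M\|_{L^\infty(0,T;H_0^1\II)}\le c$, all constants independent of $M$. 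Since $d\le3$ forces $H_0^1\II\hookrightarrow L^6\II$, this already gives $\|f_M(u_M)\|_{L^\infty(0,T;L^2\II)}\le c$. Feeding $g:=f_M(u_M)\in L^\infty(0,T;L^2\II)\subset L^p(0,T;L^2\II)$ into the maximal $L^p$ regularity for the linear equation $\partial_t^\alpha v-\kappa^2\Delta v=g$, $v(0)=u_0$, yields the first assertion; the admissible range is exactly the one in which $u_0\in H_0^1\II$ is compatible with the temporal trace space, i.e. $H_0^1\II\subset(L^2\II,H^2\II\cap H_0^1\II)_{1-1/(\alpha p),p}$, which holds iff $1-1/(\alpha p)\le\tfrac12$, that is, $p<2/\alpha$.

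For the boundedness assertion, assume $u_0\in H^2\II\cap H_0^1\II=D(\Delta)$ and use the Duhamel/Mittag--Leffler representation $u_M(t)=F_\alpha(t)u_0+\int_0^t\bar E_\alpha(t-s)f_M(u_M(s))\,\d s$, where $F_\alpha,\bar E_\alpha$ are the solution operators of the linear problem built from $E_{\alpha,1}$ and $E_{\alpha,\alpha}$. Since $F_\alpha(t)$ commutes with $\Delta$ and is uniformly bounded on $L^2\II$, one gets $\|F_\alpha(t)u_0\|_{L^\infty\II}\le c\|F_\alpha(t)u_0\|_{H^2\II}\le c\|u_0\|_{H^2\II}$. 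Fixing $\beta\in(d/4,1)$ (nonempty since $d\le3$) so that $H^{2\beta}\II\hookrightarrow L^\infty\II$, and using the smoothing estimate $\|(-\Delta)^\beta\bar E_\alpha(\sigma)\|_{L^2\II\to L^2\II}\le c\,\sigma^{\alpha(1-\beta)-1}$ with $\alpha(1-\beta)-1>-1$, the convolution term is bounded by $c\,T^{\alpha(1-\beta)}\|f_M(u_M)\|_{L^\infty(0,T;L^2\II)}\le c$. Hence $\|u_M\|_{L^\infty((0,T)\times\Omega)}\le R$ with $R$ independent of $M$, so choosing $M>R$ makes $u_M$ an actual solution of \eqref{eqn:phase-field} with all the stated regularity. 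Uniqueness follows by testing the difference $w=u-v$ of two solutions with $w$: since $u^3-v^3=(u^2+uv+v^2)w$ with $u^2+uv+v^2\ge0$ pointwise, $(f(u)-f(v),w)\le\|w\|_{L^2\II}^2$, and Lemma \ref{Frac-Gronwall} with $w(0)=0$ forces $w\equiv0$.

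The main obstacle is the lack of global Lipschitz continuity of $f$. The obvious alternative — iterating $L^{2k}$ energy estimates to reach $L^\infty$ — fails because, for $\alpha<1$, the constant $c_T$ in Lemma \ref{Frac-Gronwall} degenerates as its coefficient grows, so no $k$-uniform bound survives. The two-stage device above circumvents this: the fixed-coefficient fractional Grönwall inequality is used only to pin $u$ down in $H_0^1\II$, which (because $d\le3$) already makes $f(u)$ bounded in $L^2\II$; the remaining climb up to $L^\infty$ in space is then paid for by the smoothing of the fractional resolvent. A lesser technical point is correctly identifying the temporal trace space of maximal $L^p$ regularity, which is what forces the threshold $p<2/\alpha$ in the first assertion.
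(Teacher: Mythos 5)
Your proposal is correct in substance but follows a genuinely different route from the paper. The paper regularizes by a spectral Galerkin projection: it constructs $u_N\in X_N=\mathrm{span}\{\phi_j\}_{j=1}^N$, runs the two energy estimates (testing with $u_N$ and with $-\Delta u_N$, plus Lemma \ref{Frac-Gronwall}) at the finite-dimensional level where they are unambiguously rigorous, bounds $f(u_N)$ in $L^\infty(0,T;L^2\II)$ via $H_0^1\II\hookrightarrow L^6\II$, applies maximal $L^p$ regularity uniformly in $N$, and passes to the limit by compactness of $W^{\alpha,p}(0,T;L^2\II)\cap L^p(0,T;H^2\II)$ in $C([0,T];L^2\II)$. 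You instead regularize the nonlinearity (a Lipschitz truncation $f_M$ with $f_M'\le 1$ and $f_M(s)s\le s^2$, which does hold for your affine extension when $M\ge1$) and import existence from the globally Lipschitz theory; the two energy estimates and the Sobolev embedding step are then identical. For the $L^\infty((0,T)\times\Omega)$ bound the routes diverge again: the paper uses maximal $L^p$ regularity for all $p$ and a space-time real-interpolation embedding with $p$ large, whereas you use the Duhamel representation together with the interpolated smoothing bound $\|A^\beta E(\sigma)\|_{L^2\to L^2}\le c\sigma^{\alpha(1-\beta)-1}$ and $D(A^\beta)\hookrightarrow L^\infty\II$ for $\beta\in(d/4,1)$. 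Your version is arguably more transparent (it makes the exponent bookkeeping explicit and yields a quantitative radius $R$), and it has the pleasant feature that choosing $M>R$ converts $u_M$ into an exact solution with no limiting argument at all in the $H^2$-data case.

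There is one genuine gap as written: the first assertion concerns $u_0\in H_0^1\II$ only, and in that case you cannot invoke the $L^\infty$ bound to choose $M>R$, so $u_M$ solves the truncated equation, not \eqref{eqn:phase-field}. Your $M$-uniform bounds in $W^{\alpha,p}(0,T;L^2\II)\cap L^p(0,T;H^2\II\cap H_0^1\II)$ do give what is needed, but you must still pass to the limit $M\to\infty$: extract a subsequence converging weakly in that space and strongly in $C([0,T];L^2\II)$ (compactness for $p>1/\alpha$), and identify $\lim_M f_M(u_M)=f(u)$ using the a.e.\ convergence together with the uniform $L^\infty(0,T;L^2\II)$ bound on $f_M(u_M)$. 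This is exactly the role played by Step 2 of the paper's Galerkin argument, with $M$ in place of $N$; it is routine but cannot be omitted. A second, milder caveat: your energy manipulations (in particular $\tfrac12\partial_t^\alpha\|v\|^2\le(\partial_t^\alpha v,v)$ and the pointwise-in-time fractional Gr\"onwall step) are performed at the PDE level for $u_M$, so you are leaning on the truncated theory to supply enough temporal regularity to justify them; the paper avoids this by doing the same estimates on the finite-dimensional Galerkin system. Your closing trace-space calculation correctly explains the threshold $p<2/\alpha$, and your uniqueness argument coincides with the paper's Theorem \ref{thm:unqiue}.
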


\begin{proof}
 {\bf Step 1.} Following the routine of the Galerkin method, let
$\{\lambda_j\}_{j=1}^\infty$ and $\{\phi_j\}_{j=1}^\infty$ be respectively the eigenvalues and
the $L^2(\Omega)$-orthonormal eigenfunctions of the negative Laplace operator $-\Delta$ on the domain
$\Omega$ with the homogeneous boundary condition.
For every $N\in \mathbb{N}$, by setting $X_N = \text{span} \{ \phi_j \}_{j=1}^N$,
we consider the finite dimensional problem: find $u^N \in X_N$ such that
\begin{equation}\label{eqn:uN}
\big(\Dal u_N, v \big) - \kappa^2\big(\nabla u_N, \nabla v\big) = \big( f(u^N), v \big)\quad \forall~v\in X_N \quad \text{and}\quad u_N(0) = P_N u_0,
\end{equation}
where $P_N$ is a $L^2$-projection from $L^2\II$ onto $X_N$ by
\begin{equation}\label{eqn:P_N}
\big( u, v \big) =  \big(  P_N u, v \big)\quad \forall~v\in X_N.
\end{equation}
The existence and uniqueness of a local solution to the finite dimensional problem \eqref{eqn:uN} can be proved by the Banach fixed point theorem,
by noting that $f(s)$ is smooth and hence locally Lipshitz continuous \cite[Section 42]{KilbasSrivastavaTrujillo:2006}. Then by the energy argument, we let $v = u_N$ in \eqref{eqn:uN} and using the fact that
\begin{equation*}
\frac12\Dal  \|  u_N(t) \|_{L^2\II}^2 \le \big(\Dal u_N(t), u_N(t)\big).
\end{equation*}
Then we obtain the following estimate
\begin{equation*}
 \Dal \|  u_N(t) \|_{L^2\II}^2 + 2 \kappa^2 \|  \nabla u_N(t)  \|_{L^2\II}^2 +   2\|  u_N (t)\|_{L^4\II}^4 \le  2\| u_N (t)\|_{L^2\II}^2.
\end{equation*}
%This implies
%\begin{equation*}
% \Dal (\|  u_N(t) \|_{L^2\II}^2-\|  u_N(0) \|_{L^2\II}^2)   \le  2 (\| u_N (t)\|_{L^2\II}^2 - \| u_N (0)\|_{L^2\II}^2) +  2\| u_N (0)\|_{L^2\II}^2.
%\end{equation*}
which together with the fractional Gr\"onwall's inequality in Lemma \ref{Frac-Gronwall} yield  that
$$\|  u_N(t) \|_{L^2\II}  \le c_T \| u_N (0)\|_{L^2\II}  \le c_T \|  u_0  \|_{L^2\II}  ,$$
with a constant $c_T$ independent of $N$. That means $u_N\in C([0,T];L^2\II)$, and hence the solution is a global solution.\vskip5pt

{\bf Step 2.} Now we shall prove that $u_N$ approaches the weak solution of the time-fractional Allen-Cahn equation \eqref{eqn:phase-field}
as $N\rightarrow\infty$.
Now we repeat the energy argument by taking $v=-\Delta u_N$ in \eqref{eqn:uN},
and use the fact that
\begin{equation*}
 -\left(u^N - (u^N)^3,\Delta u_N \right) = \|\nabla u^N\|_{L^2\II}^2 - 3\|  u^N |\nabla u^N| \|^2_{L^2\II}.
 \end{equation*}
Then we arrive at the estimate
 \begin{equation*}
\Dal \|  \nabla u_N (t)\|_{L^2\II}^2 + 2\kappa^2 \|  \Delta u_N  (t)\|^2 +  6\|  u^N |\nabla u^N|(t) \|_{L^2\II}^2 \le 2\| \nabla u_N (t)\|_{L^2\II}^2.
 \end{equation*}
By Lemma \ref{Frac-Gronwall}, it holds that
$$ \|\nabla u_N\|_{L^\infty (0,T; L^2\II)}\le c_T \|  \nabla u_N \|_{L^2\II} \le c_T \|  \nabla u_0  \|_{L^2\II} ,$$
with a constant $c_T$ independent of $N$. As a result, the Sobolev embedding inequality leads to
that $u_N\in L^\infty (0,T; L^6\II)$ for $\Omega\subset \mathbb{R}^d$ with $d \le 3$, and hence
 $$\|f(u_N)\|_{L^\infty(0,T; L^2\II)}\le c.$$ Now under the assumption that $u_0\in H_0^1(\Omega)$, we apply
and the maximal $L^p$ regularity \cite{JinLiZhou:max-reg}, we may obtain  %
   \begin{equation*}
 \|  \partial_t^\alpha u_N  \|_{L^p(0,T;L^2\II)} + \| \Delta u_N \|_{L^p(0,T;L^2\II)} \le c_{p,\kappa}, \qquad \text{for all}~~ p\in[2,2/\alpha).
 \end{equation*}
where the constant $c_{p,\kappa}$ is independent of $N$. This further implies
$$u_N \in W^{\alpha,p}(0,T;L^2\II) \cap  L^p(0,T; H^2 \II\cap H_0^1\II),$$
which is compactly embedded in $C([0,T];L^2\II)$ for $p \in (1/\alpha,2/\alpha)$.
Therefore, there exist function $u$ and a subsequence, which we denote $\{u_N\}$
again, such that
 $$ u_{N} \longrightarrow u \quad\text{weak--* in}\quad L^\infty(0,T;H_0^1\II),  $$
$$\partial_t^\alpha u_{N} \longrightarrow \partial_t^\alpha  u \quad\text{weakly in}\quad L^p(0,T;L^2\II),$$
$$  u_{N} \longrightarrow  u \quad\text{weakly in}\quad L^p(0,T;H^2\II \cap H_0^1\II),$$
 $$ u_{N} \longrightarrow u \quad\text{strongly in }\quad C([0,T];L^2\II),  $$
 as $N\rightarrow0$.  Consequently, we can pass to the limit in \eqref{eqn:uN} and
 the function $u$ satisfies
\begin{equation}\label{eqn:var}
( \partial_t^\alpha u, v ) + \kappa^2 (\nabla u, \nabla v) = ( f(u),v ),\qquad \text{for all} ~~v\in H_0^1\II.
 \end{equation}
  Moreover, by the strong convergence of $u_N$ in $C ([0, T ]; L^2\II)$, we know that $u_N (0)$ converges to $u_0$ in $L^2(\Omega)$ and so $u(0) = u_0$.
Therefore, the fractional Allen-Cahn equation \eqref{eqn:phase-field}  uniquely admits a weak solution in  $W^{\alpha,p}(0,T;L^2\II) \cap L^p(0,T; H^2(\Omega)\cap H_0^1\II)$
with $p\in[2,2/\alpha)$.\vskip5pt

{\bf Step 3.} Finally, we shall prove the $L^\infty((0,T)\times\Omega)$ bound under the assumption on initial data.
The preceding argument implies that $f(u) \in L^\infty(0,T; L^2\II)$. Here we assume that $u_0 \in H^2(\Omega)\cap H_0^1\II$,
and apply the maximal $L^p$ regularity again to obtain that
  \begin{equation*}
 \|  \partial_t^\alpha u  \|_{L^p(0,T;L^2\II)} + \| \Delta u \|_{L^p(0,T;L^2\II)} \le c, \qquad \text{for all}~~ p\in(1,\infty),
 \end{equation*}
which implies that
\begin{equation*}
  u  \in W^{\alpha,p}(0,T;L^2\II) \cap  L^p(0,T;H^2\II\cap H_0^1\II).
\end{equation*}
Therefore, by means of the real interpolation with sufficient large $p$, we obtain that
\begin{equation*}
  u  \in L^\infty((0,T)\times\Omega) .
\end{equation*}
This completes the proof the theorem.
\end{proof}

The next theorem gives the uniqueness of the solution.
\begin{theorem}\label{thm:unqiue}
The weak solution of the fractional-in-time Allen-Cahn equation \eqref{eqn:phase-field} is unique.
\end{theorem}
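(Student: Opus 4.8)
The plan is to run a standard energy estimate on the difference of two solutions, exploiting the one-sided Lipschitz (monotonicity-type) structure of $f$ that survives even though $f$ is not globally Lipschitz, and then to close the argument with the fractional Gr\"onwall inequality of Lemma~\ref{Frac-Gronwall} applied with vanishing data. First I would let $u_1,u_2$ be two weak solutions in the class $W^{\al,p}(0,T;L^2\II)\cap L^p(0,T;H^2\II\cap H_0^1\II)$, $p\in[2,2/\al)$, provided by Theorem~\ref{thm:exists}, both emanating from the same datum $u_0\in H_0^1\II$, and set $w:=u_1-u_2$. Subtracting the two copies of the variational identity \eqref{eqn:var} gives
\begin{equation*}
(\Dal w,v)+\kappa^2(\nabla w,\nabla v)=\big(f(u_1)-f(u_2),v\big)\qquad\text{for all }v\in H_0^1\II,
\end{equation*}
together with $w(0)=0$. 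Taking $v=w$ and invoking the fractional convexity inequality $\tfrac12\Dal\|w(t)\|_{L^2\II}^2\le(\Dal w(t),w(t))$, already used in Step~1 of the proof of Theorem~\ref{thm:exists}, yields
\begin{equation*}
\tfrac12\Dal\|w(t)\|_{L^2\II}^2+\kappa^2\|\nabla w(t)\|_{L^2\II}^2\le\big(f(u_1)-f(u_2),w\big).
\end{equation*}

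Next I would handle the nonlinear term. Writing $f(s)=s-s^3$, one has the algebraic identity $f(u_1)-f(u_2)=w-\big(u_1^2+u_1u_2+u_2^2\big)w$, and since the quadratic form $a^2+ab+b^2=(a+\tfrac{b}{2})^2+\tfrac34 b^2$ is nonnegative,
\begin{equation*}
\big(f(u_1)-f(u_2),w\big)=\|w\|_{L^2\II}^2-\int_\Omega\big(u_1^2+u_1u_2+u_2^2\big)w^2\,\d x\le\|w\|_{L^2\II}^2.
\end{equation*}
The subtracted integral is finite because the energy estimates in the proof of Theorem~\ref{thm:exists} give $u_1,u_2\in L^\infty(0,T;H_0^1\II)\hookrightarrow L^\infty(0,T;L^6\II)$ for $d\le 3$, so that $u_i^2 w^2\in L^\infty(0,T;L^{3/2}\II)$ and in particular is integrable over $\Omega$ uniformly in $t$. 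Combining the last two displays and discarding the nonnegative gradient term, we arrive at
\begin{equation*}
\Dal\|w(t)\|_{L^2\II}^2\le 2\|w(t)\|_{L^2\II}^2,\qquad \|w(0)\|_{L^2\II}^2=0.
\end{equation*}
Applying Lemma~\ref{Frac-Gronwall} with $y(t)=\|w(t)\|_{L^2\II}^2$, $\beta=2$ and $\sigma\equiv0$ gives $y(t)\le c_T\big(y(0)+0\big)=0$, hence $w\equiv0$ and $u_1=u_2$.

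The step I expect to require the most care is the rigorous justification of the pointwise fractional convexity inequality $\tfrac12\Dal\|w\|_{L^2\II}^2\le(\Dal w,w)$ and of the energy identity leading to it, at the regularity available for weak solutions. This is the fractional surrogate for the chain rule, which fails in general; one typically establishes the needed inequality directly from the convolution kernel representation of $\Dal$ (the argument of Vergara--Zacher / Alikhanov), possibly after a density or Yosida-type regularization so that $\Dal w(t)$ can legitimately be paired with $w(t)$ in $L^2\II$. A secondary, milder point is confirming that $f(u_1)-f(u_2)$ pairs with $w$ in $L^1$ uniformly in time, which is exactly what the $L^\infty(0,T;L^6\II)$ bound inherited from Theorem~\ref{thm:exists} secures; no $L^\infty$ bound on $u_0$ is needed here, so the uniqueness holds already for $u_0\in H_0^1\II$.
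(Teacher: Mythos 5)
Your proposal is correct and follows essentially the same route as the paper: subtract the two variational identities, test with $w=u_1-u_2$, use the pointwise bound $\big(f(u_1)-f(u_2)\big)w\le w^2$ coming from the nonnegativity of $u_1^2+u_1u_2+u_2^2$, invoke $(\Dal w,w)\ge\tfrac12\Dal\|w\|_{L^2\II}^2$, and close with the fractional Gr\"onwall inequality of Lemma~\ref{Frac-Gronwall} with zero data. Your additional remarks on justifying the fractional convexity inequality and the integrability of the nonlinear pairing are sensible refinements of details the paper leaves implicit, but they do not change the argument.
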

\begin{proof}
Assume that $u_1$ and $u_2$ are two weak solution of \eqref{eqn:phase-field}. Then $w=u_1-u_2$ satisfies
$$  (\partial_t^\alpha w(t) ,v(t) ) + \kappa^2(\nabla w(t) , v(t))  = ( f(u_1) - f(u_2) , v(t)) ,\quad \text{for all} ~~v\in  H_0^1(\Omega)   $$
with $w(0) = 0$.
By taking $v = w(t)$ and using the facts that
$$( f(u_1) - f(u_2)) (u_1  - u_2) = (u_1 - u_2)^2(1-u_1^2-u_1u_2 - u_2^2) \le  (u_1 - u_2)^2$$
and
$$ (\partial_t^\alpha w, w) \ge \frac12 \partial_t^\alpha \|w(t)\|^2 $$
we obtained that
$$   \partial_t^\alpha \|w(t)\|_{L^2\II}^2  \le  2 \|w(t) \|_{L^2\II}^2 $$
with $w(0)=0$. Applying the fractional Gr\"onwall's inequality in Lemma \ref{Frac-Gronwall},
it follows directly that $w\equiv0$, i.e., $u_1=u_2$.
\end{proof}

With the help of Theorem \ref{thm:exists}, we know that $u\in L^\infty((0,T)\times\Omega)$ and hence $f(u)$ is Globally Lipschitz continuous.
Then the  regularity theory follows from the argument in \cite[Theorem 3.1]{JinLiZhou:nonlinear}.
To this end, we reformulate the fractional Allen-Cahn equation by
\begin{equation}\label{eqn:reform}
\partial_t^\alpha u + A u = u - u^3 \quad \text{for}~(x,t)\in \Omega\times(0,T] \quad \text{and} \quad u(0)=u_0,
\end{equation}
where $A=-\kappa^2\Delta$ with homogeneous Dirichlet boundary condition.
%In a bounded convex polygonal domain $\Omega$, the norm of $D$ is equivalent to the graph norm, i.e.,
%\begin{align}\label{norm-D}
%\|v\|_D \sim \|v\|_X+\|Av\|_X,\quad\forall\, v\in D.
%\end{align}
Let $\|\cdot\|_{L^2(\Omega)\to L^2(\Omega)}$ be the operator norm on the space $L^2(\Omega)$. Then the operator $A$ satisfies the following resolvent estimate
%for all $p\in(1,\infty)$
%\cite[Example 3.7.5 and Theorem 3.7.11]{ABHN}:% {\color{red}[2] contains also $A_h$?}
\begin{equation*}%\label{Deltah-resolvent}
  \| (z +A)^{-1} \|_{L^2(\Omega)\to L^2(\Omega)}\le c_{\phi,\kappa} |z|^{-1},  \quad \forall z \in \Sigma_{\phi},
  \,\,\,\forall\,\phi\in(0,\pi) ,
\end{equation*}
where for $\phi\in(0,\pi)$, $\Sigma_{\phi}:=\{z\in{\mathbb C}\backslash\{0\}: |{\rm arg}(z)|<\phi\}$.
This further implies
\begin{equation}\label{eqn:resol}
\begin{aligned}
 & \| (z^{\alpha}+A)^{-1} \|_{L^2(\Omega)\to L^2(\Omega)}
\le c_{\phi,\alpha,\kappa} |z|^{-\alpha},
 &&\forall z \in \Sigma_{\phi} ,
 \,\,\,\forall\,\phi\in(0,\pi) ,\\
& \| A(z^{\alpha}+A)^{-1} \|_{L^2(\Omega)\to L^2(\Omega)} \le c_{\phi,\alpha,\kappa} ,
&&\forall z \in \Sigma_{\phi} ,
 \,\,\,\forall\,\phi\in(0,\pi) .
 \end{aligned}
\end{equation}

Now we consider the solution representation of the following linear equation
\begin{equation}\label{eqn:linear}
    \partial_t^\alpha u(t)   + A u(t)  =  g(t)  ,
\end{equation}
with the homogeneous Dirichlet boundary condition and $u(0)=u_0$. By means of Laplace transform, denoted by $~\widehat{}~$, we obtain
\begin{equation*}
  z^\alpha \widehat{u}(z) + A\widehat w(z) = z^{\alpha-1}u_0+\widehat {g}(z),
\end{equation*}
which together with \eqref{eqn:resol} implies $\widehat{u}(z)=(z^\alpha-A)^{-1}(z^{\alpha-1}u_0+\widehat{g}(z))$. By
inverse Laplace transform and convolution rule, the solution
$u(t)$ to \eqref{eqn:linear} is given by
\begin{align}\label{re-form-nonlinear}
u(t)=F(t) u_0+\int_0^t E(t-s)g(s) d s ,
\end{align}
where the operators $F(t):X \to X $ and $E(t):X \to X $ are defined by %[{\color{red}give a reference?}]
\begin{equation}\label{eqn:EF}
F(t):=\frac{1}{2\pi {\rm i}}\int_{\Gamma_{\theta,\delta }}e^{zt} z^{\alpha-1} (z^\alpha+A )^{-1}\, \d z
\quad\mbox{and}\quad E(t):=\frac{1}{2\pi {\rm i}}\int_{\Gamma_{\theta,\delta}}e^{zt}  (z^\alpha+A )^{-1}\, \d z ,
\end{equation}
respectively. Clearly, we have $E(t)=F^\prime(t)$. The contour $\Gamma_{\theta,\delta}$ is defined by
\begin{equation}\label{contour-Gamma}
  \Gamma_{\theta,\delta}=\left\{z\in \mathbb{C}: |z|=\delta , |\arg z|\le \theta\right\}\cup
  \{z\in \mathbb{C}: z=\rho e^{\pm {\rm i}\theta}, \rho\ge \delta \},
\end{equation}
oriented with an increasing imaginary part, where $\theta\in(\pi/2,\pi)$ is fixed. \bigskip

The following lemma gives the smoothing properties of $F(t)$ and $E(t)$, which are important in the regularity estimate.
$\rm(i)$, $\rm(ii)$ and $\rm(iii)$ has been proved in \cite[Lemma 2.2]{JinLiZhou:var} and  $\rm(iv)$ was given in \cite[Lemma 3.2]{JinLiZhou:nonlinear}.
\begin{Lemma}\label{lem:smoothing}
For the operators $F$ and $E$ defined in \eqref{re-form-nonlinear}, the
following properties hold.
\begin{itemize}
\item[$\rm(i)$] $ \|F(t) v \|_{L^2\II }
+  t^\alpha \|A  F(t) v\|_{L^2\II }\le c \| v \|_{L^2\II}
,\quad\forall\,t\in(0,T]$ ,
\item[$\rm(ii)$]  $t^{1-\alpha} \|  F'(t)v\|_{L^2\II } + t \| AF'(t)v\|_{L^2\II } +t^{1-\beta\alpha}\|A^{-\beta}F'(t)v\|_{L^2\II}  \le c \| v\|_{L^2\II},\quad\forall\,t\in(0,T]$,
\item[$\rm(iii)$] $t^{1-\alpha}\|E(t)v\|_{L^2\II }+t^{2-\alpha}\|E'(t)v\|_{L^2\II }+t\|A  E(t)v\|_{L^2\II }\le c\| v \|_{L^2\II} , \quad\forall\,t\in(0,T]$;
 \item[$\rm(iv)$]$F(t)$ and $E(t):L^2\II \rightarrow H^2\II\cap H_0^1\II \,\,\,\mbox{is continuous with respect to $t\in(0,T]$}.$
\end{itemize}
\end{Lemma}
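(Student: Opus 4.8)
The plan is to derive all four assertions directly from the Dunford--Taylor representations \eqref{eqn:EF} together with the resolvent bounds \eqref{eqn:resol}. The single device used throughout is the following: for a fixed $t\in(0,T]$, shift the contour $\Gamma_{\theta,\delta}$ of \eqref{contour-Gamma} to $\Gamma_{\theta,1/t}$ (legitimate by Cauchy's theorem, since the integrand is analytic on $\Sigma_\phi$ for every $\phi\in(0,\pi)$ by \eqref{eqn:resol}, which contains the annular sector swept between the two contours, and since $e^{zt}$ with $t>0$ forces convergence), split $\Gamma_{\theta,1/t}$ into the circular arc $\{|z|=1/t,\ |\arg z|\le\theta\}$ and the two rays $\{z=\rho e^{\pm{\rm i}\theta}:\rho\ge 1/t\}$, and estimate the integrand by \eqref{eqn:resol}. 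On the rays $|e^{zt}|=e^{\rho t\cos\theta}$ with $\cos\theta<0$ because $\theta\in(\pi/2,\pi)$, so the substitution $\rho=s/t$ converts each ray integral into a fixed, convergent integral multiplied by the appropriate power of $t$; the arc contributes a term of order $1/t$ times the bound on the integrand at $|z|=1/t$. Tying the contour radius to $t$ (i.e.\ $\delta=1/t$) is exactly what produces the sharp negative powers of $t$; a fixed $\delta$ would not.

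For $\rm(i)$: combining $|z^{\alpha-1}|$ with $\|(z^\alpha+A)^{-1}\|_{L^2\II\to L^2\II}\le c|z|^{-\alpha}$ bounds the integrand of $F(t)v$ by $c|z|^{-1}|e^{zt}|$, and the arc/ray split gives $\|F(t)v\|_{L^2\II}\le c\|v\|_{L^2\II}$; combining $|z^{\alpha-1}|$ with $\|A(z^\alpha+A)^{-1}\|_{L^2\II\to L^2\II}\le c$ gives $\|AF(t)v\|_{L^2\II}\le ct^{-\alpha}\|v\|_{L^2\II}$. For $\rm(iii)$ (and, via the relation $E=F'$ recorded after \eqref{eqn:EF}, also the first two terms of $\rm(ii)$): $\|(z^\alpha+A)^{-1}\|\le c|z|^{-\alpha}$ yields $\|E(t)v\|_{L^2\II}\le ct^{\alpha-1}\|v\|_{L^2\II}$, $\|A(z^\alpha+A)^{-1}\|\le c$ yields $\|AE(t)v\|_{L^2\II}\le ct^{-1}\|v\|_{L^2\II}$, and differentiating under the integral sign (legitimate since the extra factor $z$ is absorbed by the exponential decay on the rays) gives $E'(t)v=\frac{1}{2\pi{\rm i}}\int_{\Gamma_{\theta,1/t}}ze^{zt}(z^\alpha+A)^{-1}v\,\d z$, whence $\|z(z^\alpha+A)^{-1}\|\le c|z|^{1-\alpha}$ produces $\|E'(t)v\|_{L^2\II}\le ct^{\alpha-2}\|v\|_{L^2\II}$. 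The remaining weighted term of $\rm(ii)$ is then immediate: since $A=-\kappa^2\Delta$ has a positive spectral gap $\lambda_1$, the negative power $A^{-\beta}$ acts as a bounded operator with $\|A^{-\beta}\|_{L^2\II\to L^2\II}\le\lambda_1^{-\beta}$, so $\|A^{-\beta}F'(t)v\|_{L^2\II}=\|A^{-\beta}E(t)v\|_{L^2\II}\le\lambda_1^{-\beta}\|E(t)v\|_{L^2\II}\le c\,t^{\alpha-1}\|v\|_{L^2\II}$, and multiplying by $t^{1-\beta\alpha}$ leaves $c\,t^{\alpha(1-\beta)}\le cT^{\alpha(1-\beta)}$ for $t\in(0,T]$ and $\beta\in[0,1)$.

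For $\rm(iv)$, recall that on the smooth domain $\Omega$ the norm of $H^2\II\cap H_0^1\II$ is equivalent to $\|A\,\cdot\|_{L^2\II}$, so it suffices to show that $t\mapsto AF(t)$ and $t\mapsto AE(t)$ are continuous in $\mathcal L(L^2\II)$ on $(0,T]$. Fixing any $t_0\in(0,T]$ and any \emph{fixed} $\delta>0$, on the rays of $\Gamma_{\theta,\delta}$ one has, uniformly for $t\in[t_0,T]$, the domination $|e^{zt}|\,\|z^{\alpha-1}A(z^\alpha+A)^{-1}\|\le c\,e^{\rho t_0\cos\theta}\rho^{\alpha-1}\in L^1(\delta,\infty)$, and analogously for $AE$; dominated convergence then gives continuity on $[t_0,T]$, and since $t_0$ was arbitrary, on $(0,T]$.

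I expect the main obstacle to be purely bookkeeping: one must keep the contour radius pinned to $t$ so that the arc and ray contributions carry exactly the claimed exponents, and one must handle $F(t)$ itself with some care, since its integrand decays only like $|z|^{-1}$ on the rays so that the contour integral is merely conditionally convergent---this forces the factor $e^{zt}$ to be retained throughout and the deformation and the differentiation under the integral to be justified as improper integrals. There is no genuine analytic difficulty here; the only non-automatic observation is that the negative fractional power $A^{-\beta}$ is bounded, which is why the constant in the last estimate of $\rm(ii)$ is allowed to depend on $\kappa$ and $\Omega$ through $\lambda_1$.
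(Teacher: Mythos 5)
Your contour-deformation argument (collapsing $\Gamma_{\theta,\delta}$ to radius $\delta=1/t$, splitting into arc and rays, and invoking \eqref{eqn:resol}) is the standard proof; note that the paper itself does not prove this lemma but delegates $\rm(i)$--$\rm(iii)$ to \cite[Lemma 2.2]{JinLiZhou:var} and $\rm(iv)$ to \cite[Lemma 3.2]{JinLiZhou:nonlinear}, and those references argue essentially as you do. Your treatment of $\rm(i)$, $\rm(iii)$ and $\rm(iv)$ is correct.

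There is, however, a genuine gap in $\rm(ii)$: the identity $E(t)=F'(t)$ that you lift from the line following \eqref{eqn:EF} is false (it is an error in the paper). Differentiating the Dunford--Taylor integral for $F$ brings down a factor $z$, giving $F'(t)=\frac{1}{2\pi\mathrm{i}}\int_{\Gamma_{\theta,\delta}}e^{zt}z^{\alpha}(z^\alpha+A)^{-1}\,\d z$; since $z^{\alpha}(z^\alpha+A)^{-1}=I-A(z^\alpha+A)^{-1}$ and $\int_{\Gamma_{\theta,\delta}}e^{zt}\,\d z=0$ for $t>0$, the correct relation is $F'(t)=-AE(t)$. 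With this, your route to the first two terms of $\rm(ii)$ collapses: $\|F'(t)v\|_{L^2(\Omega)}=\|AE(t)v\|_{L^2(\Omega)}\le ct^{-1}\|v\|_{L^2(\Omega)}$, and this rate is sharp uniformly in the spectrum (take $A$ equal to a scalar $\lambda$, so $F'(t)=-\lambda t^{\alpha-1}E_{\alpha,\alpha}(-\lambda t^\alpha)$, and optimize over $\lambda$), so $t^{1-\alpha}\|F'(t)v\|_{L^2(\Omega)}$ is \emph{not} bounded by $c\|v\|_{L^2(\Omega)}$; the first two terms of $\rm(ii)$ as printed appear to be misprints, with the weights meant to be paired as in the third term (the versions $t\,\|F'(t)v\|$ and $t^{1-\beta\alpha}\|A^{-\beta}F'(t)v\|$ are the provable ones). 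Your argument for the third term also needs repair: since $A^{-\beta}F'(t)=-A^{1-\beta}E(t)$, the mere boundedness of $A^{-\beta}$ does not suffice; what is needed is the interpolated smoothing estimate $\|A^{\gamma}E(t)\|_{L^2(\Omega)\to L^2(\Omega)}\le ct^{(1-\gamma)\alpha-1}$ for $\gamma\in[0,1]$ (obtained from your contour computation using $\|A^{\gamma}(z^\alpha+A)^{-1}\|\le c|z|^{-(1-\gamma)\alpha}$, or by interpolating the $\gamma=0$ and $\gamma=1$ cases of $\rm(iii)$), applied with $\gamma=1-\beta$, which yields exactly $t^{1-\beta\alpha}\|A^{-\beta}F'(t)v\|_{L^2(\Omega)}\le c\|v\|_{L^2(\Omega)}$. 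This last bound is in fact the only part of $\rm(ii)$ the paper uses later, in \eqref{eqn:e1}.
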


\begin{theorem}\label{THM:Reg}
Let $u_0\in H^2\II\cap H_0^1\II$. % with $p \ge 2$. %, and let $f:{\mathbb R}\rightarrow {\mathbb R}$ be Lipschitz continuous.
Then the time-fractional Allen-Cahn equation \eqref{eqn:phase-field} has a unique solution $u$ such that for $s\in[0,1)$
\begin{align}
&u\in C^\alpha([0,T];L^2(\Omega)) \cap
C([0,T]; H^2(\Omega)\cap H_0^1\II) ,\quad
\partial_t^\alpha u\in C([0,T];L^2(\Omega)) , \label{reg-PDE}\\
&\Delta u \in  C((0,T];H^{2s}\II ) ,\quad
  \|\Delta^{1+s} u (t)\|_{L^2(\Omega)} \le ct^{-s\alpha} \quad \mbox{for}\,\,\, t\in(0,T] . \label{reg-PDE2}\\
&\partial_tu(t)\in C((0,T];{H^{2s}\II  }) \quad
\mbox{and}\quad
 \| \Delta^s u_t (t)\|_{L^2\II}\le ct^{\alpha(1-s)-1}
\quad \mbox{for}\,\,\, t\in(0,T] .
 \label{reg-PDE3}
\end{align}
The constant $c$ above depends on $\|Au_0\|_{L^2\II}$ and $T$.
\end{theorem}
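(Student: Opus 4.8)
The single new ingredient is the $L^\infty$ bound of Theorem~\ref{thm:exists}: since $u_0\in H^2\II\cap H_0^1\II$ we have $M:=\|u\|_{L^\infty((0,T)\times\Om)}<\infty$, so along the trajectory of $u$ the nonlinearity $f(s)=s-s^3$ coincides with a fixed globally Lipschitz $C^1$ function $\widetilde f$ (a truncation of $f$ outside $[-M,M]$, Lipschitz constant $L=L(M)$), and $\|f(u)\|_{L^\infty(0,T;L^2\II)}\le c$. Hence $u$ solves a semilinear subdiffusion equation with globally Lipschitz source, and the plan is to run the bootstrap of \cite[Theorem~3.1]{JinLiZhou:nonlinear} on the representation \eqref{re-form-nonlinear}, which here reads $u(t)=F(t)u_0+\Phi(t)$ with $\Phi(t):=\int_0^t E(t-s)f(u(s))\,\d s$, using the smoothing estimates of Lemma~\ref{lem:smoothing}, the standard bound $\|F(t)v-v\|_{L^2\II}\le ct^\alpha\|Av\|_{L^2\II}$ for $v\in D(A):=H^2\II\cap H_0^1\II$, and the operator identity $A\!\int_0^t E(\tau)\,\d\tau=I-F(t)$ on $L^2\II$ (immediate from the Mittag--Leffler recurrence $zE_{\alpha,\alpha+1}(z)=E_{\alpha,1}(z)-1$, equivalently from $A(z^\alpha+A)^{-1}=I-z^\alpha(z^\alpha+A)^{-1}$ in \eqref{eqn:EF}).

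\emph{Step 1 (temporal H\"older regularity), then Step 2 ($D(A)$-regularity).} I would first show $u\in C^\alpha([0,T];L^2\II)$, hence $f(u)\in C^\alpha([0,T];L^2\II)$ by the Lipschitz property. For $0\le t_1<t_2\le T$ the linear part satisfies $\|F(t_2)u_0-F(t_1)u_0\|_{L^2\II}\le c(t_2-t_1)^\alpha\|Au_0\|_{L^2\II}$ for $u_0\in D(A)$ (a standard consequence of Lemma~\ref{lem:smoothing}: integrate $F'$ and use subadditivity of $t\mapsto t^\alpha$), and for $\Phi$ I split $\Phi(t_2)-\Phi(t_1)=\int_{t_1}^{t_2}E(t_2-s)f(u(s))\,\d s+\int_0^{t_1}\!\big(E(t_2-s)-E(t_1-s)\big)f(u(s))\,\d s$, bounding the first piece by $\|E(\tau)\|\le c\tau^{\alpha-1}$ and the second by $\|E'(\tau)\|\le c\tau^{\alpha-2}$ (Lemma~\ref{lem:smoothing}(iii)) and $\|f(u)\|_{L^\infty(0,T;L^2\II)}\le c$; an elementary integral computation turns the second into a $c(t_2-t_1)^\alpha$ estimate (this step uses only $f(u)\in L^\infty$ and $u_0\in D(A)$, so there is no circularity). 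Next, $Au(t)=F(t)(Au_0)+A\Phi(t)$; the first term is in $C([0,T];L^2\II)$ with value $Au_0$ at $t=0$ by Lemma~\ref{lem:smoothing}(i),(iv), while since $\|AE(\tau)\|\le c\tau^{-1}$ is \emph{not} integrable at $\tau=0$ I subtract $f(u(t))$ and use the identity above:
\[
A\Phi(t)=\int_0^t AE(\tau)\big(f(u(t-\tau))-f(u(t))\big)\,\d\tau+(I-F(t))f(u(t)).
\]
By Step~1 the first integrand is $O(\tau^{\alpha-1})$, so the integral converges absolutely, is continuous in $t$ and $O(t^\alpha)$; the second term is bounded by $c\|f(u(t))\|_{L^2\II}$ and continuous, vanishing at $t=0$. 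Hence $u\in C([0,T];H^2\II\cap H_0^1\II)$ and then $\partial_t^\alpha u=f(u)-Au\in C([0,T];L^2\II)$, which, with Step~1, is \eqref{reg-PDE}.

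\emph{Step 3 (weighted estimates \eqref{reg-PDE2}--\eqref{reg-PDE3}).} These follow by repeating Step~2 with fractional powers and with a time derivative, exactly as in \cite[Theorem~3.1]{JinLiZhou:nonlinear}. For \eqref{reg-PDE2} I apply $A^{1+s}$ to \eqref{re-form-nonlinear}: the linear part contributes $\|A^{1+s}F(t)u_0\|_{L^2\II}=\|A^sF(t)(Au_0)\|_{L^2\II}\le ct^{-s\alpha}\|Au_0\|_{L^2\II}$ (using $u_0\in D(A)$), and the Duhamel part, after the same $f(u(t))$-subtraction, is controlled by $\int_0^t\|A^{1+s}E(\tau)\|\,\|f(u(t-\tau))-f(u(t))\|\,\d\tau\le c\int_0^t\tau^{-1-s\alpha}\cdot\tau^{\alpha}\,\d\tau\le ct^{\alpha(1-s)}$, which converges \emph{precisely because $s<1$} (the same condition that makes $A^{1+s}E(\tau)$ a bounded operator), plus the term $A^s(I-F(t))f(u(t))$, which is $\le ct^{-s\alpha}$ once one checks $f(u(t))\in D(A)$ uniformly in $t$ (a chain-rule bound from $u\in C([0,T];H^2\II)\cap L^\infty$, valid for $d\le3$). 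Continuity of $\Delta u(\cdot)$ into $H^{2s}\II$ on $(0,T]$ then comes from Lemma~\ref{lem:smoothing}(iv) and dominated convergence. Estimate \eqref{reg-PDE3} is obtained in the same manner after differentiating \eqref{re-form-nonlinear} in $t$ and decomposing $\int_0^t E(t-s)f(u(s))\,\d s=\big(\int_0^t E(\tau)\,\d\tau\big)f(u_0)+\int_0^tE(t-s)\big(f(u(s))-f(u_0)\big)\,\d s$: differentiation of the first summand produces the leading term $E(t)f(u_0)$, which already gives $\|A^sE(t)f(u_0)\|_{L^2\II}\le ct^{\alpha(1-s)-1}$, and the remainder is handled via $\|E'(\tau)\|\le c\tau^{\alpha-2}$ and its interpolants together with the $C^\alpha$-in-time regularity of $f(u)$ from Step~1, as in \cite[Theorem~3.1]{JinLiZhou:nonlinear}.

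\emph{Anticipated main difficulty.} The genuinely delicate point is the non-integrable singularity of the Duhamel kernel ($\|AE(\tau)\|\sim\tau^{-1}$, and more generally $\|A^{1+s}E(\tau)\|\sim\tau^{-1-s\alpha}$, $\|E'(\tau)\|\sim\tau^{\alpha-2}$): it forces the order of the bootstrap — one must establish H\"older continuity in $L^2\II$, for which $u_0\in D(A)$ is essential, \emph{before} attempting $D(A)$- and $H^{2s}$-regularity — and the cancellation $f(u(s))-f(u(t))$ together with $A\!\int_0^t E(\tau)\,\d\tau=I-F(t)$ is exactly what restores integrability. The sharpest case is the time-derivative estimate \eqref{reg-PDE3}, where the strong kernel singularity and the near-circular differentiation of the convolution must be organised carefully; I would lean on \cite[Lemma~2.2]{JinLiZhou:var} and \cite[Theorem~3.1]{JinLiZhou:nonlinear} for this bookkeeping. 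A secondary technical point is the uniform-in-$t$ membership $f(u(t))\in D(A)$ needed for \eqref{reg-PDE2}--\eqref{reg-PDE3}, which rests on the Sobolev algebra structure of $H^2\II$ in dimension $d\le 3$.
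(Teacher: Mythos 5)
Your overall architecture coincides with the paper's: both rest on the $L^\infty$ bound from Theorem \ref{thm:exists} to render $f$ effectively globally Lipschitz, cite \cite{JinLiZhou:nonlinear} for the base regularity \eqref{reg-PDE}, and then bootstrap on the representation \eqref{re-form-nonlinear} with the smoothing estimates of Lemma \ref{lem:smoothing}. The differences are local. For \eqref{reg-PDE2} the paper does not use your H\"older-cancellation trick at all: it commutes one full power of $A$ onto the nonlinearity, writing $A^{1+s}E(t-y)f(u(y))=A^{s}E(t-y)\,[Af(u(y))]$, and closes with $\|A^{s}E(y)\|\le cy^{(1-s)\alpha-1}$ together with the uniform bound $\|A(u-u^3)(y)\|_{L^2\II}\le c_T$, obtained from $\Delta(u^3)=6u|\nabla u|^2+3u^2\Delta u$, $u\in C([0,T];H^2\II\cap H_0^1\II)$ and $u\in L^\infty((0,T)\times\Om)$. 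Your route (subtract $f(u(t))$, pay $\tau^{\alpha}$ from Step 1 against $\|A^{1+s}E(\tau)\|\le c\tau^{-1-s\alpha}$) also converges for $s<1$, but it is not lighter: the leftover term $A^{s}\big(I-F(t)\big)f(u(t))$ still forces you to verify $f(u(t))\in D(A^{s})$ uniformly in $t$, i.e.\ the same chain-rule computation, so the two decompositions cost the same and yield the same $ct^{-s\alpha}$ bound.

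The one point to push back on is your remainder in \eqref{reg-PDE3}. Pairing $\|A^{s}E'(\tau)\|\le c\tau^{(1-s)\alpha-2}$ with the $C^\alpha$ increment of $f(u)$ gives $\int_0^t\tau^{(2-s)\alpha-2}\,\d\tau$, which diverges unless $(2-s)\alpha>1$; this mechanism therefore fails for small $\alpha$, and for every $\alpha<1$ once $s$ is close to $1$. The paper instead differentiates the convolution as $A^{s}E(t)f(u_0)+\int_0^tA^{s}E(y)\,[f'(u)u_t](t-y)\,\d y$, uses only the kernel bound $cy^{\alpha-1}$ after the multiplier estimate $\|A^{s}(f'(u)u_t)\|_{L^2\II}\le c\|A^{s}u_t\|_{L^2\II}$, and closes with a weakly singular Gr\"onwall inequality for $t\mapsto\|A^{s}u_t(t)\|_{L^2\II}$. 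Your leading term $E(t)f(u_0)$ is the correct one, and you do defer the bookkeeping to \cite{JinLiZhou:nonlinear}, but the specific estimate you propose for the remainder is not the one that works; the Gr\"onwall closure on $\|A^{s}u_t\|$ is essential here and should be stated as such.
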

\begin{proof}
The regularity estimate \eqref{reg-PDE} with $p=2$ has already been proved in \cite[Theorem 3.1]{JinLiZhou:nonlinear}. The result in case that $p\in(2,\infty)$
follows from the same argument and the resolvent estimate \eqref{eqn:resol}.
To show other estimates, we shall apply the representation \eqref{re-form-nonlinear} and smoothing properties in Lemma \ref{lem:smoothing}.
Specifically,
\begin{align*}
A^{1+s} u(t) =  A^{s}F(t) (\Delta u_0)+ \int_0^tA^s E(t-y) [A(u - u^3)(y)] \d y .
\end{align*}
Then we apply Lemma \ref{lem:smoothing} and obtain that
\begin{align*}
 \| A^{1+s} u(t) \|_{L^2\II} \le  c t^{-s\alpha} \| A u_0 \|_{L^2\II}+ \int_0^t y^{(1-s)\alpha-1} \|A (u - u^3)(y)\|_{L^2\II} \d y .
\end{align*}
Now applying the fact that $ \Delta u^3  = 3u |\nabla u|^2 + 3u^2 \Delta u$, $u \in C([0,T];H^2\II\cap H_0^1\II)$ from \eqref{reg-PDE}
and $u \in L^\infty((0,T)\times\Omega)$ by Theorem \ref{thm:exists} we have
\begin{align*}
\|A ((2u - u^3)) \|_{L^2\II} \le c\| \Delta u \|_{L^2\II} + c \|u |\nabla u|^2\|_{L^2\II} + c \| u^2 \Delta u \|_{L^2\II} \le c_T ,
\end{align*}
with a constant $c_T$ independent of $t$. Therefore, we obtain that
\begin{align*}
 \| A^{1+s} u(t) \|_{L^2\II} %\le c t^{-s\alpha} \| A u_0 \|_{L^2\II} + \int_0^t (t-y)^{(1-s)\alpha-1} \|A(2u - u^3)(y)\|_{L^2\II} \d y \\
 &\le   c t^{-s\alpha}  .
\end{align*}
Now we turn to \eqref{reg-PDE3}. The case that $s=0$ has be confirmed in \cite[Theorem 3.1]{JinLiZhou:nonlinear}. In general,
\begin{align*}
A^{s} u_t(t) & =  A^{s-1}F'(t) (A u_0)+ \frac{d}{dt}\int_0^t A^s E(y) [(u - u^3)(t-y)] \d y \\
&= A^{s-1}F'(t) (A u_0) + A^s E(t) [(2u - u^3)(0)] + \int_0^t A^s E(y)  [(u_t - 3u^2 u_t)(t-y)] \d y
\end{align*}
The first term could be bounded using Lemma \ref{lem:smoothing} (ii)
\begin{align}\label{eqn:e1}
  \| A^{s-1}F'(t) (A u_0) \|_{L^2\II} \le  ct^{(1-s)\alpha-1}  \| \Delta u_0 \|_{L^2\II}.
\end{align}
For the second term, we use  Lemma \ref{lem:smoothing} (iii) to have
\begin{align}\label{eqn:e2}
 \| A^s E(t) [(2u - u^3)(0)] \|_{L^2\II} \le c t^{(1-s)\alpha-1}  \| 2u_0 - u_0^3 \|_{L^2\II} \le c t^{(1-s)\alpha-1} \| \Delta u_0\|_{L^2\II}.
\end{align}
Similarly, the third term follows analogously
\begin{equation*}%\label{eqn:e3}
\begin{split}
 \int_0^t  \| A^s E(y) (2u_t - 3u^2 u_t)(t-y)\|_{L^2\II} \d y &\le  c \int_0^t y^{\alpha-1}\| A^s u_t(t-y)\|_{L^2\II} \d y.
\end{split}
\end{equation*}
This together with \eqref{eqn:e1} and \eqref{eqn:e2}, and the Gr\"onwall's inequality lead to the desired result.

Finally, the continuity follows directly from the continuity of the solution operators $E(t)$ and $F(t)$, i.e., Lemma \ref{lem:smoothing} (iv).
\end{proof}

Next, we shall prove the maximum principle
in sense that
if $|u_0|\le 1$, then the solution $u(t)$ is also bounded by $1$ in
$L^\infty(\Omega)$. Note that the maximum principle was showed in \cite{Tang:2018}
under certain a priori strong regularity assumptions
on the solution $u$, which have not been rigorously proven yet.

\begin{Lemma}\label{lem:Ext}
{Assume that $f\in C[0,T]\cap C^1(0,T]$
and  $f$ attains its minimum (maximum) at  $t_0\in(0,T]$. Then there holds that
$$\Dal f(t_0)\le (\ge)~0.$$}
\end{Lemma}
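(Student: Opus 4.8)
The plan is to reduce to the case of a minimum and then read off the sign of $\Dal f(t_0)$ from the sign structure of the Caputo kernel, after one integration by parts that is valid under the bare $C^1$ hypothesis.

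\emph{Reduction.} The maximum statement follows from the minimum statement applied to $-f$, so I would assume $f$ attains its minimum at $t_0\in(0,T]$. Since $\Dal$ annihilates constants, $\Dal f(t_0)=\Dal h(t_0)$ with $h(s):=f(s)-f(t_0)$; note that $h\ge 0$ on $[0,T]$, $h(t_0)=0$, and $h\in C[0,T]\cap C^1(0,T]$.

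\emph{Integration by parts.} In
\begin{equation*}
\Dal h(t_0)=\frac{1}{\Gamma(1-\al)}\int_0^{t_0}(t_0-s)^{-\al}h'(s)\,\d s
\end{equation*}
I would write $h'(s)\,\d s=\d h(s)$ and integrate by parts so that the derivative falls on the kernel — this uses only $h\in C^1$, not $C^2$ — obtaining
\begin{equation*}
\int_0^{t_0}(t_0-s)^{-\al}h'(s)\,\d s=\Big[(t_0-s)^{-\al}h(s)\Big]_{s=0}^{s=t_0}-\al\int_0^{t_0}\frac{h(s)}{(t_0-s)^{1+\al}}\,\d s .
\end{equation*}
Since $h\in C^1$ near $t_0$ with $h(t_0)=0$, the mean value theorem gives $|h(s)|\le c\,(t_0-s)$ there, so the boundary term at $s=t_0$ is $O\big((t_0-s)^{1-\al}\big)\to0$ (using $\al<1$), and the same bound makes the improper integral convergent; the boundary term at $s=0$ equals $t_0^{-\al}h(0)$.

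\emph{Conclusion.} Assembling the pieces gives the representation
\begin{equation*}
\Dal f(t_0)=\frac{1}{\Gamma(1-\al)}\left(-\,t_0^{-\al}h(0)-\al\int_0^{t_0}\frac{h(s)}{(t_0-s)^{1+\al}}\,\d s\right),
\end{equation*}
and since $\Gamma(1-\al)>0$, $h(0)=f(0)-f(t_0)\ge0$, and $h\ge0$ pointwise, the right-hand side is $\le0$; hence $\Dal f(t_0)\le0$. (The same identity incidentally yields the sharper bound $\Dal f(t_0)\le \frac{t_0^{-\al}}{\Gamma(1-\al)}\big(f(t_0)-f(0)\big)$.) The only genuinely delicate point — and hence the main obstacle — is justifying this integration by parts and the convergence of $\int_0^{t_0}(t_0-s)^{-1-\al}h(s)\,\d s$ from merely $f\in C^1(0,T]$: the $C^1$ regularity is needed precisely to produce $h(s)=O(t_0-s)$ near the singular endpoint $s=t_0$, which tames the $(t_0-s)^{-1-\al}$ blow-up. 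Near $s=0$ the kernel is bounded (because $t_0>0$), so there one only invokes the standing assumption that $\Dal f(t_0)$ is well defined, i.e. that $f'\in L^1(0,t_0)$.
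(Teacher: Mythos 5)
Your proof is correct. The paper actually states Lemma \ref{lem:Ext} without any proof (it is the classical extremum principle for the Caputo derivative, due to Luchko), so there is nothing to compare against; your argument --- subtract $f(t_0)$, integrate by parts on $[\epsilon,t_0]$ so the derivative falls on the kernel, kill the boundary term at $t_0$ via $h(s)=O(t_0-s)$, and read off the sign from $h\ge 0$ --- is precisely the standard proof, and you correctly identify the only delicate points (convergence of $\int_0^{t_0}(t_0-s)^{-1-\al}h(s)\,\d s$ near $s=t_0$, and integrability of $f'$ near $s=0$, which is implicit in $\Dal f(t_0)$ being defined). Your byproduct bound $\Dal f(t_0)\le \frac{t_0^{-\al}}{\Gamma(1-\al)}\bigl(f(t_0)-f(0)\bigr)$ is likewise the sharpened form found in the literature.
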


\begin{theorem}\label{thm:max}
Let $u_0\in H^2\II\cap H_0^1\II$.
Further, we assume that $|u_0(x)|\le 1$, then the solution of the time-fractional Allen Cahn equation \eqref{eqn:phase-field} with homogeneous boundary conditions satisfies
the maximum principle that
$$|u(x,t)|\le 1\qquad \text{for all} \quad (x,t)\in\Omega\times(0,T].$$
\end{theorem}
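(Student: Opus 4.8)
The plan is to read off the maximum principle from the sharp regularity of Theorem~\ref{THM:Reg} together with the pointwise extremum inequality of Lemma~\ref{lem:Ext}, by a contradiction argument. The first task is to give a pointwise meaning to \eqref{eqn:phase-field}. I would fix $s\in(3/4,1)$, which is admissible for every $d\le 3$; then the estimates \eqref{reg-PDE}--\eqref{reg-PDE3}, elliptic regularity, and the Sobolev embeddings $H^{2s}\II\hookrightarrow C(\overline\Omega)$ and $H^{2+2s}\II\hookrightarrow C^2(\overline\Omega)$ (both valid for $d\le 3$ with this choice of $s$) show that $u\in C(\overline\Omega\times[0,T])$, that $u(\cdot,t)\in C^2(\overline\Omega)$ for every $t\in(0,T]$, that $\partial_t u$ is continuous on $\overline\Omega\times(0,T]$, and consequently that $\partial_t^\alpha u=\kappa^2\Delta u+f(u)$ is continuous on $\overline\Omega\times(0,T]$ as well. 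In particular the identity $\partial_t^\alpha u-\kappa^2\Delta u=f(u)$, with $f(s)=s-s^3$, holds for every $(x,t)\in\Omega\times(0,T]$, and for each fixed $x\in\Omega$ the profile $t\mapsto u(x,t)$ lies in $C[0,T]\cap C^1(0,T]$, so Lemma~\ref{lem:Ext} is applicable to it.

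With this in hand, suppose for contradiction that $M:=\max_{\overline\Omega\times[0,T]}u>1$; by Step~1 the maximum is attained at some $(x_0,t_0)$. Since $u=0$ on $\partial\Omega\times(0,T]$ and $u(\cdot,0)=u_0$ with $|u_0|\le 1$, we must have $x_0\in\Omega$ and $t_0\in(0,T]$. As $x_0$ is an interior spatial maximum of the $C^2$ function $u(\cdot,t_0)$, its Hessian there is negative semidefinite, hence $\Delta u(x_0,t_0)\le 0$; as $t_0\in(0,T]$ is a maximum point over $[0,T]$ of $t\mapsto u(x_0,t)$, Lemma~\ref{lem:Ext} gives $\partial_t^\alpha u(x_0,t_0)\ge 0$. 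Evaluating the equation at $(x_0,t_0)$ then forces $f(M)=\partial_t^\alpha u(x_0,t_0)-\kappa^2\Delta u(x_0,t_0)\ge 0$, which contradicts $f(M)=M(1-M^2)<0$. Hence $u\le 1$. The bound $u\ge -1$ follows by the symmetric argument applied to $m:=\min_{\overline\Omega\times[0,T]}u$: if $m<-1$ it is attained at an interior point $(x_1,t_1)$ with $t_1\in(0,T]$, where $\Delta u(x_1,t_1)\ge 0$ and, by Lemma~\ref{lem:Ext}, $\partial_t^\alpha u(x_1,t_1)\le 0$, so $f(m)\le 0$, contradicting $f(m)=m(1-m^2)>0$. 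Combining the two inequalities yields $|u(x,t)|\le 1$ on $\Omega\times(0,T]$.

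I expect the main obstacle to be not the extremum argument itself but the preliminary upgrade of regularity: one must make sure that the abstract smoothing estimates of Theorem~\ref{THM:Reg} really do translate, in dimension $d\le 3$, into enough classical regularity — continuity of $\Delta u$ up to the boundary so that the interior second-derivative test is legitimate, and continuity of $\partial_t u$ on $(0,T]$ so that the temporal profiles meet the hypotheses of Lemma~\ref{lem:Ext} — which is what makes the pointwise reading of \eqref{eqn:phase-field} valid. If one prefers to avoid invoking $C^2$-regularity in space, the inequality $\Delta u(x_0,t_0)\le 0$ can instead be deduced from the continuity of $\Delta u$ alone via the spherical-mean identity centred at $x_0$, and the whole argument then goes through using only the weaker embedding $H^{2s}\II\hookrightarrow C(\overline\Omega)$.
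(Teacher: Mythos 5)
Your proposal is correct and follows essentially the same route as the paper's own proof: a contradiction argument at an interior space-time extremum, using Lemma~\ref{lem:Ext} to control the sign of $\partial_t^\alpha u$ and the interior second-derivative test (via the continuity of $\Delta u$ from Theorem~\ref{THM:Reg}) to control the sign of $\Delta u$, then evaluating the equation pointwise. The only difference is that you spell out in more detail the Sobolev-embedding step that upgrades the abstract regularity of Theorem~\ref{THM:Reg} to the classical continuity needed for the pointwise reading of the equation, which the paper asserts more tersely.
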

\begin{proof}
Assume that the minimum of $u$
is smaller than $-1$ and achieved at $(x_0,t_0)\in \Omega\times(0,T]$.
Then by the regularity \eqref{reg-PDE} and \eqref{reg-PDE3} in Theorem \ref{THM:Reg}, and Lemma \ref{lem:Ext}, we have that
$$\Dal u(x_0,t_0)\le 0.$$
Meanwhile, via applying Theorem \ref{THM:Reg}, we have that $\Delta u$ is continuous in $\overline \Omega$ and hence
 $$\Delta u(x_0,t_0) \ge 0.$$
Therefore, we arrive at the result
\begin{equation*}
  0 \ge \Dal u(x_0,t_0) - \Delta u(x_0,t_0) = u(x_0,t_0) - u(x_0,t_0)^3 >0,
\end{equation*}
which leads to a contradiction.
The upper bound of the solution can be proved using the same way.
\end{proof}

\begin{remark}\label{rem:energy-stable}
We are also interested in the energy bound of the solution.
Suppose that the solution satisfies
$u\in W^{1,\frac{2}{2-\alpha}}(0,T; L^2(\Omega))\cap L^2(0,T; H^2\II\cap H_0^1\II)$,
then the stability of energy can be directly observed by taking $v = u_t$ in \eqref{eqn:var} and integrate over $(0,T)$,
\begin{equation*}
\int_0^T (\partial_t^\alpha u (t), u_t(t)) +  (\nabla u (t),\nabla u_t(t)) \,\d t = \int_0^T (f (u(t)), u_t(t))\,\d t
\end{equation*}
Using the regularity assumption and applying the fact that (see \cite[Corollary 2.1.]{Tang:2018} or \cite[Lemma 3.1]{Mustapha:2014})
\begin{equation*}
\int_0^T (\partial_t^\alpha u (t), u_t(t)) \,dt \ge 0,
\end{equation*}
we obtain that
\begin{equation*}
\frac{1}{2}\int_0^T\frac{d}{dt}  \int_\Omega  |\nabla u (t)|^2 \,\d x\,\d t + \int_0^T \frac{d}{dt} \int_\Omega F(u(t)) \,\d x \,\d t \le 0.
\end{equation*}
and hence we derive the energy bound that
\begin{equation*}
E(u(T)) \le E(u_0).
\end{equation*}
This property has been proved in \cite{Tang:2018} without confirmation of the regularity assumption.
By Theorem \ref{THM:Reg}, one can easily verify those assumption, provided that $u_0\in H^2(\Omega)\cap H_0^1(\Omega)$.
\end{remark}

\section{Numerical schemes for the time-fractional Allen-Cahn equation}\label{sec:scheme}
In this section, we shall propose time stepping schemes and study some quantitative properties, such as discrete maximum principle and energy dissipation,
which are important for phase field models.
To this end, we descretize the Caputo fractional derivative by using
 the convolution quadrature \cite{Lubich:1988} generated by backward Euler method (BE-CQ),
which is commonly known as the Gr\"unwald-Letnikov scheme in the literature \cite{OldhamSpanier:1974}.
We divide the interval $[0,T]$ into a uniform grid
with a time step size $\tau = T/N$, $N\in\mathbb{N}$, so that $0=t_0<t_1<\ldots<t_N=T$, and $t_n=n
\tau$, $n=0,\ldots,N$. Then the Caputo fractional derivative is approximated by
 \begin{equation}\label{eqn:BE-CQ}
 \bar \partial_\tau^\alpha u_n  := \tau^{-\alpha}\sum_{j=1}^n \omega_{n-j} (u_j-u_0),\quad \text{where}~~
\omega_j= (-1)^j\frac{\alpha(\alpha-1)\ldots(\alpha-j+1)}{j!}.
\end{equation}
\subsection{Convex splitting scheme and the energy stability.}
%Next, we propose a fully implicit time stepping scheme, which is unconditionally solvable.
The first method is the convex splitting scheme (CS).
For given $u_0,u_1,\ldots,u_{n-1}$, we find $u_n$ by solving a nonlinear elliptic problem
\begin{equation}\label{eqn:cs}
\bar\partial_\tau^\alpha u_n - \kappa^2\Delta u_n=u_{n-1}-(u_{n})^3, \qquad \text{for}~~ 1 \le n\le N,
\end{equation}
with the homogeneous Dirichlet boundary condition.
This simple time stepping method is a nonlinear scheme inspired by the standard convex splitting method for general gradient flows.
which handles the convex part $u^3$ implicitly and replaces the concave part $u_n$ with $u_{n-1}$.
Meanwhile, the CS scheme \eqref{eqn:cs} can be written as the nonlinear elliptic problem
\begin{equation}\label{eqn:cs-1}
\left(I-\tau^\alpha \kappa^2\Delta\right)u_n+\tau^\alpha(u_n)^3=  u_{n,\alpha} + \tau^{\alpha} u_{n-1}.
\end{equation}
where $u_{n,\alpha}$ denotes the fractional extrapolation \eqref{eqn:frac-interp2}.
Since left part is monotone with respect to $u_n$. By the implicit function theorem, there exists a unique solution and hence the CS scheme \eqref{eqn:cs} is uniquely solvable.

The following theorem states that the CS scheme satisfies the discrete maximum principle.
This is an important and well-known property of the standard CS scheme of the classical Allen-Cahn equation.
More investigations on maximum principle preserving schemes for integer order Allen--Cahn equations can be found in \cite{STY1,TY1}.
\begin{theorem}\label{thm:dmp-cs}
The CS scheme \eqref{eqn:cs} satisfies the discrete maximum principle unconditionally, i.e.,
\begin{equation*}
 \|u_0\|_{L^\infty(\Omega)}\le 1 \Longrightarrow \|u_n\|_{L^\infty(\Omega)}\le 1 \quad \text{for all}\quad n\ge 1.
\end{equation*}
\end{theorem}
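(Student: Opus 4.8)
The plan is an induction on $n$ whose engine is the reformulation \eqref{eqn:cs-1} together with a classical comparison principle for a monotone semilinear elliptic equation; the only genuinely \emph{fractional} ingredient is checking that the right-hand side of \eqref{eqn:cs-1} is a convex combination (plus a controlled perturbation) of the previously computed iterates. First I would record the sign structure of the Gr\"unwald--Letnikov weights in \eqref{eqn:BE-CQ}: since $\alpha\in(0,1)$ one has $\omega_0=1$, $\omega_j<0$ for all $j\ge1$, and the partial sums satisfy $0<\sum_{j=0}^m\omega_j\le1$ for every $m\ge0$ (they decrease monotonically to $0$, as $\sum_{j\ge0}\omega_j z^j=(1-z)^\alpha$ vanishes at $z=1$). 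Substituting $\bar\partial_\tau^\alpha u_n=\tau^{-\alpha}\bigl[(u_n-u_0)+\sum_{j=1}^{n-1}\omega_{n-j}(u_j-u_0)\bigr]$ into \eqref{eqn:cs} and multiplying by $\tau^\alpha$ reproduces \eqref{eqn:cs-1},
\begin{equation*}
(I-\tau^\alpha\kappa^2\Delta)u_n+\tau^\alpha u_n^3 = u_{n,\alpha}+\tau^\alpha u_{n-1}, \quad\text{with}\quad u_{n,\alpha}=\Bigl(\sum_{j=0}^{n-1}\omega_j\Bigr)u_0+\sum_{j=1}^{n-1}(-\omega_{n-j})u_j ,
\end{equation*}
and the identity $\sum_{j=0}^{n-1}\omega_j+\sum_{j=1}^{n-1}(-\omega_{n-j})=\omega_0=1$ together with the sign facts above shows that the coefficients appearing in $u_{n,\alpha}$ are nonnegative and sum to $1$; i.e.\ $u_{n,\alpha}$ is a convex combination of $u_0,\dots,u_{n-1}$.

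Now the induction. The case $n=0$ is the hypothesis. Assume $\|u_j\|_{L^\infty(\Omega)}\le1$ for $0\le j\le n-1$. Since $u_{n,\alpha}$ is a convex combination of functions with sup-norm $\le1$ and $\|u_{n-1}\|_{L^\infty(\Omega)}\le1$, the right-hand side $g:=u_{n,\alpha}+\tau^\alpha u_{n-1}$ of \eqref{eqn:cs-1} obeys $|g|\le 1+\tau^\alpha$ a.e.\ in $\Omega$. Consider the monotone semilinear elliptic problem $(I-\tau^\alpha\kappa^2\Delta)w+\tau^\alpha w^3=g$ in $\Omega$ with $w=0$ on $\partial\Omega$, whose unique solution is $w=u_n$ (unique solvability was established just before the statement). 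The constant function $1$ is a supersolution, since $(I-\tau^\alpha\kappa^2\Delta)(1)+\tau^\alpha\cdot1^3=1+\tau^\alpha\ge g$ and $1\ge0$ on $\partial\Omega$; likewise $-1$ is a subsolution. Because $s\mapsto s+\tau^\alpha s^3$ is strictly increasing, the comparison principle for this operator yields $-1\le u_n\le1$, i.e.\ $\|u_n\|_{L^\infty(\Omega)}\le1$, which closes the induction. Equivalently, one may test the weak form of \eqref{eqn:cs-1} against $(u_n-1)^+\in H_0^1(\Omega)$ and against $(u_n+1)^-\in H_0^1(\Omega)$ and use monotonicity of the cubic together with the bound on $g$ to conclude that these truncations vanish; this avoids any appeal to pointwise regularity of $u_n$.

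The main obstacle is the first paragraph's bookkeeping: one has to pin down the sign and summation properties of the convolution-quadrature weights so that $u_{n,\alpha}$ is genuinely a convex combination, which is precisely what lets the nonlocal-in-time scheme inherit a maximum principle despite the memory term. Once that is in place, the elliptic part of the argument is completely standard.
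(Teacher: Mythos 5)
Your proposal is correct and follows essentially the same route as the paper: induction on $n$, the sign and summation properties of the convolution-quadrature weights to show $u_{n,\alpha}$ is a convex combination of $u_0,\dots,u_{n-1}$ (hence $\|u_{n,\alpha}+\tau^\alpha u_{n-1}\|_{L^\infty(\Omega)}\le 1+\tau^\alpha$), and then monotonicity of $s\mapsto s+\tau^\alpha s^3$ together with the maximum principle for $I-\tau^\alpha\kappa^2\Delta$ applied to \eqref{eqn:cs-1}. The only difference is that you spell out the final elliptic step via explicit constant sub/supersolutions (or the truncation test functions), where the paper simply invokes positivity and monotonicity; this is a welcome elaboration, not a different argument.
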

\begin{proof}
We prove the discrete maximum principle  by induction. To this end, we assume that $\|u_k\|_{L^\infty(\Omega)}\le 1$ for all $k<n$.
Note that the weights $\{ \omega_j \}$ satisfies the property that
\begin{equation*}
		(\text{i})\qquad	\omega_0>0 \ \ \text{and}\ \
		\omega_j<0\ \text{for}\  j\ge1,
	\quad\text{and}\qquad (\text{ii})\qquad
	\displaystyle \sum_{j=0}^n \omega_j>0, \text{for}\  n\ge1.
	\qquad\qquad
\end{equation*}
Thus $u_{n,\alpha}$ is a convex combination of $u_0, u_1,\ldots,u_{n-1}$, and hence $\|u_{n,\alpha}\|_{L^\infty(\Omega)}\le 1$. Therefore, we have
 $\|u_{n,\alpha} + \tau^\alpha u_{n-1}\|_{L^\infty(\Omega)}\le 1+\tau^\alpha$.
Besides, the operator $I-\tau^\alpha \kappa^2\Delta$ is  positive and $(u^n)^3$ has the same sign of $u^n$.
Then it follows immediately that $\|u^n\|_\infty\le 1$   by the monotonicity and \eqref{eqn:cs-1}.
\end{proof}

Besides the discrete maximum principle, we shall prove the energy stability of the time-stepping scheme \eqref{eqn:cs}.
To this end, we shall use the following lemma on the backward Euler convolution quadrature (BE-CQ).
A similar result has been proved in \cite[Lemma 3.1]{Tang:2018} for the time stepping method using piecewise linear polynomial interpolation,
known as L1 approximation in the literature \cite{LinXu:2007, SunWu:2006}.
However, the convolution quadrature does not satisfy the algebraic property like the L1 scheme. 
So here we apply a different approach, say discrete time Fourier transform.

\begin{Lemma}\label{lem:cq-int}
The BE-CQ satisfies the following positivity property
\begin{equation*}
  \sum_{j=1}^N (\bar \partial_\tau^\alpha u_n , \bar \partial_\tau^1 u_n) \ge 0.
\end{equation*}
\end{Lemma}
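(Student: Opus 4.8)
The plan is to prove the positivity $\sum_{n=1}^N (\bar\partial_\tau^\alpha u_n, \bar\partial_\tau^1 u_n) \ge 0$ by passing to the frequency domain via the discrete-time Fourier transform, turning the double sum into an integral of a quadratic form whose kernel can be shown to have nonnegative real part. First I would extend the sequence $(u_n)_{n=0}^N$ by zero outside $\{0,\dots,N\}$ (equivalently, work on the half-line $n\ge 0$ with $u_n$ eventually constant or zero after $n=N$; the subtraction of $u_0$ in $\bar\partial_\tau^\alpha$ and the difference $\bar\partial_\tau^1 u_n = (u_n - u_{n-1})/\tau$ make the relevant sequences finitely supported). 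Writing the generating functions, the BE-CQ operator $\bar\partial_\tau^\alpha$ corresponds to multiplication by $\tau^{-\alpha}\delta(\zeta)^\alpha$ where $\delta(\zeta) = 1-\zeta$ is the symbol of backward Euler, and $\bar\partial_\tau^1$ corresponds to multiplication by $\tau^{-1}\delta(\zeta)$. On the unit circle $\zeta = e^{-\mathrm i\theta}$, Parseval's identity for the (finite) sequences gives
\begin{equation*}
\sum_{n} (\bar\partial_\tau^\alpha u_n, \bar\partial_\tau^1 u_n)
= \frac{\tau^{-1-\alpha}}{2\pi}\int_{-\pi}^{\pi} \overline{\delta(e^{-\mathrm i\theta})^\alpha}\,\delta(e^{-\mathrm i\theta})\,\|\widehat u(\theta)\|_{L^2(\Omega)}^2 \,\d\theta ,
\end{equation*}
so that, taking real parts, it suffices to show $\operatorname{Re}\big(\overline{\delta(\zeta)^\alpha}\,\delta(\zeta)\big)\ge 0$, i.e. $\operatorname{Re}\big(\overline{(1-\zeta)^\alpha}(1-\zeta)\big)\ge 0$, for all $\zeta$ on the unit circle.

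The key computation is then purely scalar. For $\zeta = e^{-\mathrm i\theta}$ write $1-\zeta = 1 - e^{-\mathrm i\theta} = 2\sin(\theta/2)\, e^{\mathrm i(\pi/2-\theta/2)}$, so $1-\zeta$ has modulus $r = 2|\sin(\theta/2)|$ and argument $\psi$ with $|\psi|\le \pi/2$. Then $(1-\zeta)^\alpha$ has argument $\alpha\psi$, and $\overline{(1-\zeta)^\alpha}(1-\zeta)$ has argument $\psi - \alpha\psi = (1-\alpha)\psi$, whose absolute value is $(1-\alpha)|\psi|\le (1-\alpha)\pi/2 < \pi/2$ since $\alpha\in(0,1)$. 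A complex number with argument of absolute value strictly less than $\pi/2$ has strictly positive real part, which gives the pointwise nonnegativity of the kernel and hence, after integration, the claimed inequality. (One also checks the $\theta=0$ endpoint, where $1-\zeta=0$ and the integrand vanishes, causing no trouble.)

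The main obstacle I anticipate is the bookkeeping needed to justify the Parseval step rigorously, specifically making sure the boundary terms from truncating at $n=N$ are handled correctly — the cleanest route is to note that $\sum_{n=1}^N(\bar\partial_\tau^\alpha u_n,\bar\partial_\tau^1 u_n)$ depends only on $u_0,\dots,u_N$ and equals the corresponding infinite sum for the sequence that agrees with $(u_n)$ on $\{0,\dots,N\}$ and is constant equal to $u_N$ thereafter (so $\bar\partial_\tau^1$ of it is supported in $\{1,\dots,N\}$), and that the convolution weights $\omega_j$ for $(1-\zeta)^\alpha$ are absolutely summable since $\omega_j = O(j^{-1-\alpha})$, so all the transforms converge and Parseval applies on $L^2$ of the circle. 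A secondary minor point is confirming that the branch of $z\mapsto z^\alpha$ used to define $\omega_j$ is the principal branch on the slit plane containing the image of the unit disk under $1-\zeta$, which it is, so that the argument identity $\arg(1-\zeta)^\alpha = \alpha\arg(1-\zeta)$ is valid. Everything else is routine.
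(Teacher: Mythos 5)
Your proposal is correct and takes essentially the same route as the paper: both pass to the discrete-time Fourier transform, apply Parseval, and reduce the claim to the nonnegativity of the real part of the backward-Euler symbol on the unit circle --- your kernel $\overline{(1-\zeta)^\alpha}\,(1-\zeta)$ equals $|1-\zeta|^2\,\overline{(1-\zeta)^{\alpha-1}}$, i.e.\ the paper's symbol up to a positive factor, and your reduction via first differences and subtraction of $u_0$ matches the paper's substitution $v_n=u_n-u_0$, $y_n=\bar\partial_\tau^1 v_n$. Your explicit computation $\arg\left(1-e^{-\mathrm{i}\theta}\right)\in[-\pi/2,\pi/2]$, hence $(1-\alpha)\left|\arg\left(1-e^{-\mathrm{i}\theta}\right)\right|<\pi/2$, in fact supplies the one scalar detail the paper asserts without proof.
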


\begin{proof}
First, we assume that $u_0=0$. Then by setting $y_n= \partial_\tau^{1} u_n$ for $n\ge1$ and $y_0=0$,
the definition of convolution quadrature yields that
\begin{equation*}
  \sum_{j=1}^N (\bar \partial_\tau^\alpha u_n , \bar \partial_\tau^1 u_n) =  \sum_{j=1}^N (\bar \partial_\tau^{\alpha-1} y_n , y_n).
\end{equation*}
%we have $\bar \partial_\tau^\alpha u_n = \bar \partial_\tau^{\alpha-1}\bar \partial_\tau^{1} u_n$,
where
\begin{equation}\label{eqn:alpha-1}
   \bar \partial_\tau^{\alpha-1} y_n = \tau^{1-\alpha}\sum_{j=0}^n \omega_{n-j}^{(\alpha-1)} y_j\quad \text{with}~~
\omega_j= (-1)^j\frac{(\alpha-1)\ldots(\alpha-j )}{j!}.
\end{equation}
%Now by set $y_n= \partial_\tau^{1} u_n$ for $n\ge1$ and $y_0=0$, we observe
%\begin{equation*}
%  \sum_{j=1}^N (\bar \partial_\tau^\alpha u_n , \bar \partial_\tau^1 u_n) =  \sum_{j=1}^N (\bar \partial_\tau^{\alpha-1} y_n , y_n).
%\end{equation*}
To show this nonnegativity property, we shall extend
$\{ y_n \}_{n=0}^N$ to $\{ y_n \}_{n=-\infty}^{n=\infty}$ and $\{ \omega_n \}_{n=0}^{n=\infty}$ to
$\{ \omega_n \}_{n=-\infty}^{n=\infty}$ by zero extension. Then
 the discrete fractional derivative can be written as
\begin{equation}\label{eqn:alpha-2}
\bar \partial_\tau^{\alpha-1} u_n = \tau^{1-\alpha}\sum_{j=-\infty}^\infty \omega_{n-j}^{(\alpha-1)} y_j
\end{equation}
From now on, we denote the discrete time Fourier transform $ \widetilde {[u_n]}(\zeta)$ by
\begin{equation}\label{eqn:DTFT}
  \widetilde {[u_n]}(\zeta) = \sum_{n=-\infty}^\infty u_n e^{-\mathrm{i} n\zeta}.
\end{equation}
Then by the Parseval's theorem we have
\begin{equation*}
 \sum_{j=0}^N (\bar \partial_\tau^{\alpha-1} y_n , y_n) = \frac{1}{2\pi}\int_{-\pi}^\pi  (\widetilde{[\bar \partial_\tau^{\alpha-1} y_n]}(\zeta) , \widetilde{[y_n]}(\zeta)^*) \,\d\zeta
\end{equation*}
By the property of discrete time Fourier transform \eqref{eqn:DTFT} and the discrete convolution \eqref{eqn:alpha-1}, we have
\begin{equation*}
\begin{split}
 \sum_{j=0}^N (\bar \partial_\tau^{\alpha-1} y_n , y_n) &= \frac{\tau^{1-\alpha}}{2\pi}\int_{-\pi}^\pi \widetilde{[\omega_n^{(\alpha-1)}]}~ \Big|\widetilde{[y_n]}(\zeta)\Big|^2 \,\d\zeta\\
 & = \frac{\tau^{1-\alpha}}{2\pi}\int_{-\pi}^\pi \Big( 1-e^{-\mathrm{i}\zeta} \Big)^{\alpha-1} \Big|\widetilde{[y_n]}(\zeta)\Big|^2 \,\d\zeta\\
 & = \frac{\tau^{1-\alpha}}{2\pi}\int_{0}^\pi 2\Big[\mathbb{Re}\Big( 1-e^{-\mathrm{i}\zeta} \Big)^{\alpha-1}\Big] \Big|\widetilde{[y_n]}(\zeta)\Big|^2 \,\d\zeta \ge 0.\\
 \end{split}
\end{equation*}
In case that $u_0\neq0$, we let $v_n=u_n-u_0$ and note that
\begin{equation*}
  \sum_{j=1}^N (\bar \partial_\tau^\alpha u_n , \bar \partial_\tau^1 u_n) =  \sum_{j=1}^N (\bar \partial_\tau^\alpha v_n , \bar \partial_\tau^1 v_n) .
\end{equation*}
The repeating the argument for $v_n$ leads to the desired result. That completes the proof of the lemma.
\end{proof}

Then the next theorem states that the CS scheme \eqref{eqn:cs} is energy stable.
\begin{theorem}\label{thm:wes-cs}
Suppose that $u_0\in H_0^1\II$. Then the CS scheme \eqref{eqn:cs} satisfies the energy stability
$$E(u_n)\le E(u_{0}),\qquad \text{for all}\quad n\ge 1.$$
%provided that
%$$S+\tau^{-\alpha}\ge2$$
%where $S$ is the stabilizing constant.
\end{theorem}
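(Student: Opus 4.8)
The plan is to establish the energy inequality $E(u_n)\le E(u_0)$ by summing the scheme against the discrete time derivative $\bar\partial_\tau^1 u_k$ over $k=1,\dots,n$, exactly mimicking the continuous argument in Remark \ref{rem:energy-stable} but now using the discrete positivity property from Lemma \ref{lem:cq-int} in place of the continuous inequality $\int_0^T(\partial_t^\alpha u,u_t)\,\d t\ge 0$. First I would rewrite \eqref{eqn:cs} tested against $v=\bar\partial_\tau^1 u_k=(u_k-u_{k-1})/\tau$ in $L^2(\Omega)$, giving
\begin{equation*}
(\bar\partial_\tau^\alpha u_k,\bar\partial_\tau^1 u_k)+\kappa^2(\nabla u_k,\nabla(u_k-u_{k-1}))/\tau=(u_{k-1}-u_k^3,\bar\partial_\tau^1 u_k).
\end{equation*}
Multiplying by $\tau$ and summing over $k=1,\dots,n$, the first term contributes $\sum_{k=1}^n\tau(\bar\partial_\tau^\alpha u_k,\bar\partial_\tau^1 u_k)\ge 0$ by Lemma \ref{lem:cq-int}, so it may be dropped from the right-hand side after moving it over.

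The second (gradient) term is handled by the elementary identity $(\nabla u_k,\nabla(u_k-u_{k-1}))=\tfrac12\|\nabla u_k\|^2-\tfrac12\|\nabla u_{k-1}\|^2+\tfrac12\|\nabla(u_k-u_{k-1})\|^2$, which telescopes and yields $\tfrac{\kappa^2}2\|\nabla u_n\|^2-\tfrac{\kappa^2}2\|\nabla u_0\|^2$ plus a nonnegative remainder. For the potential term, the whole point of convex splitting is that it is designed so that $\tau(u_{k-1}-u_k^3,\bar\partial_\tau^1 u_k)=(u_{k-1}-u_k^3,u_k-u_{k-1})$ is bounded above by $\int_\Omega F(u_{k-1})-F(u_k)\,\d x$: one writes $F(u)=\tfrac14(u^2-1)^2$, splits $F=F_c-F_e$ with $F_c$ convex ($F_c'=u^3$) and $F_e$ convex with $F_e'=u$, and uses convexity of $F_c$ to get $F_c(u_k)-F_c(u_{k-1})\le (u_k^3,u_k-u_{k-1})$ and concavity of $-F_e$ to get $-F_e(u_k)+F_e(u_{k-1})\le(-u_{k-1},u_k-u_{k-1})$ — wait, more carefully, concavity of $-F_e$ gives $-F_e(u_k)+F_e(u_{k-1})\le(-u_k,u_k-u_{k-1})$, so one actually needs to check the sign works with $u_{k-1}$ in the explicit term; the standard convex-splitting computation shows $(u_{k-1}-u_k^3,u_k-u_{k-1})\le\int_\Omega(F(u_{k-1})-F(u_k))\,\d x$ holds because $F_e(b)-F_e(a)\le F_e'(b)(b-a)$ fails but $\ge F_e'(a)(b-a)$ holds with the roles arranged by the explicit treatment of the concave part. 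This telescopes to $\int_\Omega F(u_0)-F(u_n)\,\d x$.

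Combining the three pieces, all the telescoped boundary terms at level $0$ assemble into $E(u_0)$, those at level $n$ into $E(u_n)$, and every discarded term (the CQ quadratic form and the $\tfrac12\|\nabla(u_k-u_{k-1})\|^2$ sum) is nonnegative, giving $E(u_n)\le E(u_0)$. I would also note at the start that the regularity needed to test with $\bar\partial_\tau^1 u_k$ is automatic here since $u_k\in H^2(\Omega)\cap H_0^1(\Omega)$ by the unique solvability of \eqref{eqn:cs-1} discussed above (elliptic regularity), so no extra hypothesis beyond $u_0\in H_0^1(\Omega)$ is required.

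The main obstacle is getting the convex-splitting inequality for the potential term with the correct direction of the inequality, since the explicit term is $u_{k-1}$ rather than $u_k$; this is where one must be careful to use convexity of $F_c$ evaluated so that $u_k^3$ (the implicit cubic) bounds $F_c(u_k)-F_c(u_{k-1})$ from below, and convexity of $F_e$ so that $u_{k-1}$ (the explicit linear term) bounds $F_e(u_k)-F_e(u_{k-1})$ from below as well, i.e. $F_e(u_k)-F_e(u_{k-1})\ge F_e'(u_{k-1})(u_k-u_{k-1})=(u_{k-1},u_k-u_{k-1})$. Subtracting gives exactly $F(u_k)-F(u_{k-1})\le (u_k^3-u_{k-1},u_k-u_{k-1})$, i.e. $(u_{k-1}-u_k^3,u_k-u_{k-1})\le F(u_{k-1})-F(u_k)$ pointwise, then integrate. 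Everything else is routine telescoping and the invocation of Lemma \ref{lem:cq-int}.
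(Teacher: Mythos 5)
Your proposal is correct and follows essentially the same route as the paper: test the scheme against the discrete time increment $u_k-u_{k-1}$, telescope the gradient term via the standard identity, bound the potential term by the convex-splitting inequality $(u_{k-1}-u_k^3)(u_k-u_{k-1})\le F(u_{k-1})-F(u_k)$ (which the paper states directly as the ``fundamental inequality'' \eqref{eqn:u3} and you derive from the tangent-line characterization of convexity), and discard the nonnegative convolution-quadrature sum via Lemma \ref{lem:cq-int}. Your sign analysis of the explicit/implicit treatment of the concave and convex parts resolves correctly, so the argument is complete.
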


\begin{proof}
Taking $L^2$-inner product of \eqref{eqn:cs} with $-(u_n-u_{n-1})$ yields
$$-(\bar\partial_\tau^\alpha u_n, u_n - u_{n-1})=\kappa^2(\nabla u_n,\nabla u_n-\nabla u_{n-1})+\left((u_n)^3-u_{n-1},u_n-u_{n-1}\right).$$
Here we note that
$$\kappa^2(\nabla u_n,\nabla u_n-\nabla u_{n-1}) = \frac{\kappa^2}2\| \nabla u_n \|_{L^2\II}^2 -   \frac{\kappa^2}2\| \nabla u_{n-1} \|_{L^2\II}^2
+ \frac{\kappa^2}{2}\| \nabla(u_n-u_{n-1}) \|_{L^2\II}^2,$$
Meanwhile, the fundamental inequality
\begin{equation}\label{eqn:u3}
(a^3-b,a-b)\ge \frac{1}{4}\left(a^2-1\right)^2-\frac{1}{4}\left(b^2-1\right)^2
\end{equation}
yields that
\begin{equation}\label{eqn:u33}
\left((u_n)^3-u_{n-1},u_n-u_{n-1}\right) \ge \frac{1}{4}\|(u_n)^2-1\|_{L^2\II}^2-\frac{1}{4}\|(u_{n-1})^2-1\|_{L^2\II}^2.
\end{equation}
Therefore, we arrive at the estimate
\begin{equation*}
\begin{split}
-(\bar\partial_\tau^\alpha u_n, u_n - u_{n-1}) \ge E(u_n) - E(u_{n-1})
\end{split}
\end{equation*}
Summing up both sides for $n = 1,\ldots,N$, we obtain that
\begin{equation*}
\begin{split}
-\tau \sum_{n=1}^N(\bar\partial_\tau^\alpha u_n, \bar\partial_\tau^1 u_n) \ge E(u_N) - E(u_{0}).
\end{split}
\end{equation*}
Using Lemma \ref{lem:cq-int}, we have that
\begin{equation*}
E(u_N) -  E(u_{0}) \le 0
\end{equation*}
which completes the proof.
\end{proof}

\begin{remark}
However, little is known about the energy dissipation law, which is important to the gradient flow models. For the convex splitting scheme of the conventional Allen-Cahn eqaution,
it is well-known that the energy at each time level is decreasing, i.e., $E(u_n) \le E(u_{n-1})$ for all $n\ge 1$, which might not be true in the fractional case.
Therefore, to develop a time stepping scheme which satisfies some energy dissipation principles is  a very interesting
and important task, which will be explored in the next part.
\end{remark}

\subsection{Fractional weighted schemes and energy dissipation law.}
To derive some novel time stepping schemes, we shall introduce the following backward fractional interpolation
\begin{equation}\label{eqn:frac-interp}
I_\alpha u(t_n)=  - \sum_{j=1}^{n-1} \omega_{n-j} \big(u(t_{n-j})-u(0)\big) + \omega_0 u(0),
\end{equation}
which is independent of $u(t_n)$.
Then it is easy to see that
\begin{equation}\label{eqn:frac-interp2}
u(t_n) - I_\alpha u(t_n) =  \sum_{j=1}^{n} \omega_{n-j} (u(t_j)-u(0)) =\tau^\alpha \bar \partial_\tau^\alpha u(t_n),
\end{equation}
which means the approximation error is of order $O(\tau^\alpha)$.

Then we propose the so-called weighted convex splitting (WCS) scheme, reading that for given $u_0, u_1,\ldots,u_{n-1}$, we shall look for the function $u_n$ satisfying
\begin{equation}\label{eqn:wcs}
\bar\partial_\tau^\alpha u_n - \kappa^2\Delta u_n = u_{n,\alpha}-(u_n)^3, \qquad \text{for}~~ 1 \le n\le N,
\end{equation}
with the homogeneous Dirichlet boundary condition,
where $u_{n,\alpha}$ denotes the fractional extrapolation \eqref{eqn:frac-interp2}.
In this nonlinear scheme, we handle the convex part $u^3$ implicitly and replaces the concave part $u$ with the fractional extrapolation $u_{n,\alpha}$.
The WCS scheme \eqref{eqn:wcs} can be written as the following nonlinear elliptic problem
\begin{equation}\label{eqn:wcs2}
\left(I-\tau^\alpha \kappa^2\Delta\right)u_n+\tau^\alpha(u_n)^3=\left(1+ \tau^{\alpha} \right)u_{n,\alpha}
\end{equation}
%with the homogeneous Dirichlet boundary condition.
Since the left hand side is monotone with respect to $u_n$, the unique solvability follows directly from the implicit function theorem.

The next time stepping scheme, called linear weighted stabilized (LWS) scheme, is developed as
\begin{equation}\label{eqn:lws}
\bar\partial_\tau^\alpha u_n - \kappa^2\Delta u_n  = u_{n,\alpha}-(u_{n,\alpha})^3-S\left(u_n-u_{n,\alpha}\right), \qquad \text{for}~~ 1 \le n\le N,
\end{equation}
where $S>0$ is a stabilization constant. Such the scheme is linear because $u_{n,\alpha}$ is independent of $u_n$.
Then at each time level, the scheme \eqref{eqn:lws} requires  solving the elliptic problem
\begin{equation*}
\left((1+\tau^\alpha S)I - \tau^\alpha \kappa^2\Delta\right)u_n =  (1+ (S+1) \tau^\alpha) u_{n,\alpha}- \tau^\alpha (u_{n,\alpha})^3
\end{equation*}
with the homogeneous Dirichlet boundary condition, which is uniquely solvable.

Next, we intend to show that both of schemes \eqref{eqn:wcs} and \eqref{eqn:lws} satisfy the discrete maximum principle.
\begin{theorem}\label{thm:dmp}
The WCS scheme \eqref{eqn:wcs} satisfies the discrete maximum principle unconditionally, i.e.,
\begin{equation*}
 \|u_0\|_{L^\infty(\Omega)}\le 1 \Longrightarrow \|u_n\|_{L^\infty(\Omega)}\le 1 \quad \text{for all}\quad n\ge 1.
\end{equation*}
Meanwhile, the LWS scheme \eqref{eqn:lws} satisfies the discrete maximum principle with the constraint $$S+ \tau^{-\alpha} \ge2,$$
where $S$ is the stabilization constant in \eqref{eqn:lws}.
\end{theorem}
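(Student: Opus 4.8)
The plan is to argue by induction on $n$ for both schemes, reducing the discrete maximum principle to a pointwise comparison argument for the single elliptic problem solved at each time step, exactly in the spirit of the proof of Theorem~\ref{thm:dmp-cs}. The base case $n=0$ is the hypothesis $\|u_0\|_{L^\infty(\Omega)}\le1$. Assume $\|u_k\|_{L^\infty(\Omega)}\le1$ for all $k<n$. The only place the nonlocal structure enters is through $u_{n,\alpha}$: by the weight properties $\omega_0>0$, $\omega_j<0$ for $j\ge1$ and $\sum_{j=0}^m\omega_j>0$ (already used in Theorem~\ref{thm:dmp-cs}), $u_{n,\alpha}$ is a convex combination of $u_0,\dots,u_{n-1}$, hence $\|u_{n,\alpha}\|_{L^\infty(\Omega)}\le1$ by the induction hypothesis. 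Everything after this is a statement about one elliptic equation with a right-hand side bounded in $L^\infty(\Omega)$.

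For the WCS scheme I would use the equivalent form \eqref{eqn:wcs2}, $(I-\tau^\alpha\kappa^2\Delta)u_n+\tau^\alpha u_n^3=(1+\tau^\alpha)u_{n,\alpha}=:g$ with $\|g\|_{L^\infty(\Omega)}\le1+\tau^\alpha$. Comparing $u_n$ with the constant super-solution $1$: since $(I-\tau^\alpha\kappa^2\Delta)1+\tau^\alpha\cdot1^3=1+\tau^\alpha\ge g$, subtraction gives $(I-\tau^\alpha\kappa^2\Delta)(1-u_n)+\tau^\alpha(1-u_n^3)\ge0$, and factoring $1-u_n^3=(1-u_n)(1+u_n+u_n^2)$ with $1+u_n+u_n^2\ge\tfrac34>0$ shows that $w=1-u_n$ satisfies $\bigl(I-\tau^\alpha\kappa^2\Delta+\tau^\alpha(1+u_n+u_n^2)\bigr)w\ge0$ with $w|_{\partial\Omega}=1\ge0$; the weak maximum principle for this nonnegatively perturbed positive operator yields $u_n\le1$. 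The lower bound $u_n\ge-1$ follows identically by comparison with $-1$, using $1+u_n^3=(1+u_n)(1-u_n+u_n^2)$ and $1-u_n+u_n^2\ge\tfrac34$. No restriction on $\tau$ is needed here precisely because the cubic is treated implicitly with the right sign/monotonicity.

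For the LWS scheme I would instead use its linear elliptic form $\bigl((1+\tau^\alpha S)I-\tau^\alpha\kappa^2\Delta\bigr)u_n=(1+(S+1)\tau^\alpha)u_{n,\alpha}-\tau^\alpha u_{n,\alpha}^3=:h$, where the cubic now sits on the known right-hand side. Since $u_{n,\alpha}(x)\in[-1,1]$ pointwise, I would bound $\|h\|_{L^\infty(\Omega)}$ by studying the scalar function $\phi(s):=(1+(S+1)\tau^\alpha)s-\tau^\alpha s^3$ on $[-1,1]$: one has $\phi(\pm1)=\pm(1+S\tau^\alpha)$ and $\phi'(s)=1+(S+1)\tau^\alpha-3\tau^\alpha s^2\ge1+(S-2)\tau^\alpha$ for $|s|\le1$. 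Hence, exactly under the stated constraint $S+\tau^{-\alpha}\ge2$ — equivalently $1+(S-2)\tau^\alpha\ge0$ — $\phi$ is nondecreasing on $[-1,1]$, so $\phi([-1,1])=[-(1+S\tau^\alpha),\,1+S\tau^\alpha]$ and $\|h\|_{L^\infty(\Omega)}\le1+S\tau^\alpha$. The operator $(1+\tau^\alpha S)I-\tau^\alpha\kappa^2\Delta$ with homogeneous Dirichlet data obeys the comparison principle, and the constants $\pm1$ are super/sub-solutions for the right-hand side bound $\pm(1+S\tau^\alpha)$; therefore $\|u_n\|_{L^\infty(\Omega)}\le1$, closing the induction.

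The step I expect to require the most care is pinning down the LWS constraint: one has to recognize that the relevant quantity is the \emph{range} of the scalar cubic $\phi$ over $[-1,1]$, and that forcing this range into $[-(1+S\tau^\alpha),\,1+S\tau^\alpha]$ is equivalent to the monotonicity of $\phi$ on $[-1,1]$, which is precisely $1+(S-2)\tau^\alpha\ge0$. If $\phi$ were allowed to be non-monotone its interior critical values could overshoot the bound and the comparison argument would break down; this is the sole reason the LWS scheme — unlike WCS — needs a mild condition on $S$ and $\tau$. The remaining ingredients, namely the convex-combination property of $u_{n,\alpha}$ and the weak maximum principle for $I-c\Delta$ and its nonnegative zeroth-order perturbations under homogeneous Dirichlet conditions, are standard and already invoked in the proof of Theorem~\ref{thm:dmp-cs}.
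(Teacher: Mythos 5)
Your proposal is correct and follows essentially the same route as the paper: induction plus the convex-combination property of $u_{n,\alpha}$, monotonicity/comparison for the implicit cubic in the WCS elliptic problem, and for LWS the observation that the scalar cubic $(1+c)s-s^3$ is monotone on $[-1,1]$ exactly when $c=S+\tau^{-\alpha}\ge 2$, combined with the $L^\infty$ bound on the resolvent of the shifted Laplacian. Your WCS step merely spells out, via the explicit factorization $1-u_n^3=(1-u_n)(1+u_n+u_n^2)$ and a super/sub-solution comparison, what the paper compresses into ``by the monotonicity.''
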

\begin{proof}
We prove the discrete maximum principle  by induction. Recall that
$u_{n,\alpha}$ is a convex combination of $u_0, u_1,\ldots,u_{n-1}$, and hence $\|u_{n,\alpha}\|_{L^\infty(\Omega)}\le 1$.
Again, we rewrite the WCS scheme \eqref{eqn:wcs} into the form \eqref{eqn:wcs2} %that
and note that the operator $I-\tau^\alpha \kappa^2\Delta$ is  positive. Meanwhile, $(u_n)^3$ has the same sign of $u^n$.
Hence, it follows immediately that $\|u^n\|_\infty\le 1$ by the monotonicity. \vskip5pt
Now we turn to the LWS scheme \eqref{eqn:lws}, we rewrite the time stepping scheme as
\begin{equation*}
\left((S+\tau^{-\alpha})I-\kappa^2\Delta\right)u_n=\left(1+S+\tau^{-\alpha} \right)u_{n,\alpha}-(u_{n,\alpha})^3.
\end{equation*}
The right side in the form of $h(x)=(1+c)x-x^3$ with the constant $c=S+\tau^{-\alpha}$ and $x\in[-1,1]$.
Under the assumption that $c=S+\tau^{-\alpha}\ge2$,
the function $h(x)$ is monotonically increasing as well as
$$\min_{x\in[-1,1]}h(x)=h(-1)=-c\quad\text{and}\quad \max_{x\in[-1,1]}h(x)=h(1)=c,$$
due to $h'(x)=(1+c)-3x^2 \ge 0$. As a result, we have
$$\left\|\left(1+S+ \tau^{-\alpha} \right) u_{n,\alpha}-(u_{n,\alpha})^3\right\|_{L^\infty\II}\le S+ \tau^{-\alpha},$$
which together with the property that
$$\left\|\left( (S+ \tau^{-\alpha})I-\kappa^2\Delta\right)^{-1}\right\|_{L^\infty\II\rightarrow L^\infty\II} \le \left(S+ \tau^{-\alpha} \right)^{-1}$$
compete the proof of the theorem.
\end{proof}

Finally, we intend to study the energy dissipation property.
\begin{theorem}\label{thm:wes-wcs}
The WCS scheme \eqref{eqn:wcs} satisfies the weighted energy stability unconditionally, i.e.,
$$E(u_n)\le E(u_{n,\alpha}),\qquad \text{for all}\quad n\ge 1.$$
The LWS scheme \eqref{eqn:lws} also satisfies the weighted energy stability with the constraint $S+\tau^{-\alpha}\ge2$.
\end{theorem}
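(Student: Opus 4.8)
The plan is to mimic the energy-stability argument used for the CS scheme (Theorem~\ref{thm:wes-cs}), but now testing against the increment relative to the fractional extrapolation rather than relative to $u_{n-1}$. Concretely, for the WCS scheme I would take the $L^2$-inner product of \eqref{eqn:wcs} with $-(u_n-u_{n,\alpha})$. The diffusion term gives, via the polarization identity,
\begin{equation*}
\kappa^2(\nabla u_n,\nabla u_n-\nabla u_{n,\alpha})=\frac{\kappa^2}{2}\|\nabla u_n\|_{L^2\II}^2-\frac{\kappa^2}{2}\|\nabla u_{n,\alpha}\|_{L^2\II}^2+\frac{\kappa^2}{2}\|\nabla(u_n-u_{n,\alpha})\|_{L^2\II}^2 .
\end{equation*}
For the nonlinear term I would invoke the same elementary inequality \eqref{eqn:u3}, namely $(a^3-b,a-b)\ge\frac14(a^2-1)^2-\frac14(b^2-1)^2$, with $a=u_n$ and $b=u_{n,\alpha}$, to get
\begin{equation*}
\big((u_n)^3-u_{n,\alpha},u_n-u_{n,\alpha}\big)\ge\frac14\|(u_n)^2-1\|_{L^2\II}^2-\frac14\|(u_{n,\alpha})^2-1\|_{L^2\II}^2 .
\end{equation*}
Adding these and discarding the nonnegative term $\frac{\kappa^2}{2}\|\nabla(u_n-u_{n,\alpha})\|^2$ yields the pointwise-in-time inequality $-(\bar\partial_\tau^\alpha u_n,u_n-u_{n,\alpha})\ge E(u_n)-E(u_{n,\alpha})$. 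Since by \eqref{eqn:frac-interp2} we have $u_n-u_{n,\alpha}=\tau^\alpha\bar\partial_\tau^\alpha u_n$, the left-hand side equals $-\tau^\alpha\|\bar\partial_\tau^\alpha u_n\|_{L^2\II}^2\le 0$, and therefore $E(u_n)\le E(u_{n,\alpha})$ for every $n\ge1$. Note this is genuinely simpler than the CS case: no summation over $n$ and no appeal to Lemma~\ref{lem:cq-int} is needed, because the ``good'' test function $u_n-u_{n,\alpha}$ is itself proportional to $\bar\partial_\tau^\alpha u_n$.

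For the LWS scheme \eqref{eqn:lws} I would again test with $-(u_n-u_{n,\alpha})$. The diffusion term is handled identically. The stabilization term contributes $S\|u_n-u_{n,\alpha}\|_{L^2\II}^2\ge 0$, which is favorable. The only delicate point is the nonlinear term, which now appears in the linearized form $\big(u_{n,\alpha}-(u_{n,\alpha})^3,\,-(u_n-u_{n,\alpha})\big)$, i.e.\ we must bound $F(u_n)-F(u_{n,\alpha})$ from above by $-\big(F'(u_{n,\alpha}),u_n-u_{n,\alpha}\big)$ plus the stabilization slack. By Taylor expansion, $F(u_n)-F(u_{n,\alpha})=F'(u_{n,\alpha})(u_n-u_{n,\alpha})+\frac12 F''(\xi)(u_n-u_{n,\alpha})^2$ for some intermediate value $\xi$; under the discrete maximum principle (Theorem~\ref{thm:dmp}, valid here since $S+\tau^{-\alpha}\ge2$) both $u_n$ and $u_{n,\alpha}$ lie in $[-1,1]$, hence so does $\xi$, and $F''(\xi)=3\xi^2-1\le 2\le 2(S+\tau^{-\alpha}-1)=:2S'$ — actually $F''(\xi)\le 2\le 2S$ whenever $S\ge1$, and more precisely $F''(\xi)\le 2\le 2(S+\tau^{-\alpha})$ under the stated constraint. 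Thus the quadratic remainder is dominated by the stabilization term, giving $-(\bar\partial_\tau^\alpha u_n,u_n-u_{n,\alpha})\ge E(u_n)-E(u_{n,\alpha})+\big(S-\tfrac12 F''(\xi)\big)\|u_n-u_{n,\alpha}\|^2\ge E(u_n)-E(u_{n,\alpha})$, and again the left side is $-\tau^\alpha\|\bar\partial_\tau^\alpha u_n\|^2\le 0$.

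The main obstacle — such as it is — is precisely the bookkeeping in the LWS case: one has to be careful that the constant appearing in front of the quadratic remainder from the Taylor expansion of $F$ is genuinely controlled by the stabilization constant under the constraint $S+\tau^{-\alpha}\ge2$, which is exactly the same constraint already needed for the discrete maximum principle, and that the maximum principle is legitimately available (so that $\xi\in[-1,1]$ and $|F''(\xi)|\le 2$). Everything else is a routine repetition of the polarization identity and inequality \eqref{eqn:u3}; the key structural observation that makes the whole theorem short is the identity $u_n-u_{n,\alpha}=\tau^\alpha\bar\partial_\tau^\alpha u_n$, which turns the tested time-derivative term into a manifestly nonpositive quantity without any convolution-quadrature positivity lemma.
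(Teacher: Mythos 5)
Your proposal is correct and follows essentially the same route as the paper: test with $-(u_n-u_{n,\alpha})$, use the polarization identity for the gradient term and the inequality \eqref{eqn:u3} (for WCS) or its Taylor-remainder analogue under the discrete maximum principle (for LWS), and observe that the tested fractional-derivative term equals $-\tau^{-\alpha}\|u_n-u_{n,\alpha}\|_{L^2\II}^2\le 0$. The only slip is in your final LWS display, where the coefficient $S-\tfrac12F''(\xi)$ need not be nonnegative under the stated constraint (e.g.\ $S<1$ with $\tau^{-\alpha}$ large); as in the paper, the quadratic remainder must be absorbed by the combined coefficient $S+\tau^{-\alpha}\ge 2\ge \tfrac12 F''(\xi)$ obtained after moving the time-derivative contribution to the right-hand side, which is exactly the fix you already hint at.
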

\begin{proof}
Taking $L^2$-inner product of \eqref{eqn:wcs} with $-(u_n-u_{n,\alpha})$ yields
$$-\frac{1}{\Gamma(2-\alpha)\tau^{\alpha}}\left\|u_n-u_{n,\alpha}\right\|_{L^2}^2=\kappa^2(\nabla u_n,\nabla u_n-\nabla u_{n,\alpha})+\left((u_n)^3-u_{n,\alpha},u_n-u_{n,\alpha}\right).$$
By a fundamental equaliy
$$a(a-b)=\frac{1}{2}a^2-\frac{1}{2}b^2+\frac{1}{2}(a-b)^2,$$
and a fundamental inequality
$$(a^3-b,a-b)\ge \frac{1}{4}\left(a^2-1\right)^2-\frac{1}{4}\left(b^2-1\right)^2,$$
we have  %$\|\|_{L^2}^2$
$$
   \begin{aligned}
  & \frac{\kappa^2}{2}\|\nabla u_n\|_{L^2}^2-\frac{\kappa^2}{2}\|\nabla u_{n,\alpha}\|_{L^2}^2+\frac{1}{4}\|(u_n)^2-1\|_{L^2}^2-\frac{1}{4}\|(u_{n,\alpha})^2-1\|_{L^2}^2 \\
   \le&-\frac{1}{\Gamma(2-\alpha)\tau^{\alpha}}\left\|u_n-u_{n,\alpha}\right\|_{L^2}^2-\frac{\kappa^2}{2}\|\nabla u_n-\nabla u_{n,\alpha}\|_{L^2}^2.
   \end{aligned}
  $$
  Thus, for the WCS scheme \eqref{eqn:wcs} the weighted energy stability $E(u_n)\le E(u_{n,\alpha})$ holds unconditionally.

\bigskip For the LWS scheme \eqref{eqn:lws}, we follow the similar technique, we can derive
  $$\kappa^2(\nabla u_n,\nabla u_n-\nabla u_{n,\alpha})+\left((u_{n,\alpha})^3-u_{n,\alpha},u_n-u_{n,\alpha}\right)+\left(S+ \tau^{-\alpha}\right)\left\|u_n-u_{n,\alpha}\right\|_{L^2}^2=0.$$
  We again use the fundamental equality
$$a(a-b)=\frac{1}{2}a^2-\frac{1}{2}b^2+\frac{1}{2}(a-b)^2,$$
and obtain the following inequality
$$(b^3-b)(a-b)+(a-b)^2\ge \frac{1}{4}(a^2-1)^2-\frac{1}{4}(b^2-1)^2, \quad \forall a,b\in[-1,1].$$
As in last theorem we have shown the discrete maximum principle, it follows above inequality that
$$\left((u_{n,\alpha})^3-u_{n,\alpha},u_n-u_{n,\alpha}\right)+\left(S+ \tau^{-\alpha}\right)\left\|u_n-u_{n,\alpha}\right\|_{L^2}^2\ge \frac{1}{4}\|(u_n)^2-1\|_{L^2}^2-\frac{1}{4}\|(u_{n,\alpha})^2-1\|_{L^2}^2,$$
whenever $S+\tau^{-\alpha}\ge 2$. This ends up with
$$
  \frac{\kappa^2}{2}\|\nabla u_n\|_{L^2}^2-\frac{\kappa^2}{2}\|\nabla u_{n,\alpha}\|_{L^2}^2+\frac{1}{4}\|(u_n)^2-1\|_{L^2}^2-\frac{1}{4}\|(u_{n,\alpha})^2-1\|_{L^2}^2 \le 0,
  $$
  which is exactly the desired result.
\end{proof}

\begin{remark}\label{rem:egy-disc}
%The weighted energy stability together with the discrete maximum principle indicates the boundedness of energy.
It is seen that the weighted energy stability can be extended for more general form free energy.
We intend to emphasize that if the free energy is given convex (e.g., the linear problem or time-fractional
Allen-Cahn with $|u_n|\ge \frac{\sqrt3}{3}$), the weighted energy stability indicates the fractional energy dissipation law as
\begin{equation}\label{eqn:egy-disc}
\bar\partial_\tau^\alpha E(u_n)\le 0.
\end{equation}
This is consistent with the energy decay property of classical gradient flow ($\alpha=1$), i.e.,
$\bar\partial_\tau E(u_n)\le 0.$
At the continuous level, it is possible to derive the fractional energy dissipation law for the linear subdiffusion model
\begin{align}\label{eqn:fde-linear}
\partial_t^\alpha u - \kappa^2 \Delta u=0.
\end{align}
By taking $L^2$-inner product of \eqref{eqn:fde-linear} with $-\partial_t^\alpha u$, we derive that
\begin{equation*}
 0\ge - \| \partial_t^\alpha u(t) \|_{L^2\II}^2 = \kappa^2 (\nabla u (t), \Dal \nabla u(t)) \ge \Dal \Big(\frac{1}{2} \|  \nabla u (t)\|_{L^2\II}^2\Big) =  \Dal E(u(t)),
\end{equation*}
which is consistent with the discrete energy dissipation law \eqref{eqn:egy-disc}.
Unfortunately, the free energy associated with Allen--Cahn equation is nonlinear and not always convex with respect to $u$.
So far, we cannot prove the fractional energy dissipation law but many numerical examples have already verified
it in some literatures. Seeking the theoretical proof the  fractional energy dissipation law is a very
interesting and meaningful future work.
\end{remark}

\section{Error analysis of time stepping schemes}\label{sec:error}
In this section, we shall derive the error analysis of the time stepping schemes proposed in Section \ref{sec:scheme}.
This requires some preliminary estimate for linear problem \eqref{eqn:linear} and the implicit BE-CQ scheme
\begin{equation}\label{eqn:BE-CQ-linear}
\bar\partial_\tau^\alpha u_n + A u_n = g(t_n)  \qquad \text{for}~~ 1 \le n\le N,
\end{equation}
with given initial condition $u_0$,  where $A=-\kappa^2 \Delta$ with the homogeneous Dirichlet boundary condition. The following lemma gives the error estimate,
which has been developed in \cite{JinLazarovZhou:SISC2016}.

\begin{Lemma}\label{lem:error-linear}
Let $u_0\in H^2\II\cap H_0^1\II$ and $u(t)$ be the solution of the linear time-fractional evolution equation \eqref{eqn:linear}.
Then $\{u_n\}$, the solutions of implicit BE-CQ scheme \eqref{eqn:BE-CQ-linear}  satisfies
\begin{equation*}
 \| u_n - u(t_n) \|_{L^2\II} \le c  t_n^{\alpha-1}\tau \|A u_0 + g(0)\|_{L^2(\Omega)} +c\tau\int_0^{t_n}(t_n-s)^{\alpha-1}\|g'(s)\|_{L^2(\Omega)}\d s.
\end{equation*}
\end{Lemma}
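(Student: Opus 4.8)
The plan is to derive the error equation satisfied by $e_n := u_n - u(t_n)$, recognize that its right-hand side is the consistency error of the BE-CQ approximation of the Caputo derivative, and then invert the discrete resolvent using the same contour-integral machinery as in \eqref{eqn:EF} together with the resolvent bounds \eqref{eqn:resol}. Concretely, writing the continuous solution via the representation \eqref{re-form-nonlinear} and the discrete solution via a discrete analogue of it (obtained by applying the generating-function calculus of convolution quadrature), the difference $e_n$ is expressed as a discrete convolution of kernels $E(t)$ and $F(t)$ against the defect $d_n := (\bar\partial_\tau^\alpha - \partial_t^\alpha)u(t_n)$. The standard consistency analysis of the backward Euler CQ (see \cite{Lubich:1988, JinLazarovZhou:SISC2016}) gives that the generating function of the weights, $\delta_\tau(\zeta)/\tau = (1-\zeta)/\tau$, satisfies $\delta_\tau(e^{-z\tau})^\alpha = z^\alpha(1 + O(z\tau))$ for $z$ on the contour $\Gamma_{\theta,\delta}$, so replacing $z^\alpha$ by $\delta_\tau(e^{-z\tau})^\alpha$ in the Dunford--Cauchy integrals introduces an $O(\tau)$ perturbation that can be tracked quantitatively.

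The key steps, in order: first, I would rewrite \eqref{eqn:BE-CQ-linear} in the form $\bar\partial_\tau^\alpha(u_n - u_0) + A u_n = g(t_n)$ and take the discrete Laplace (i.e., $\zeta$-) transform to obtain a closed-form expression for the discrete solution, namely $u_n$ equals a contour integral over a small circle (or equivalently over the image of $\Gamma_{\theta,\delta}$ under $z \mapsto e^{-z\tau}$) of $e^{zt_n}\,\delta_\tau(e^{-z\tau})^{\alpha-1}(\delta_\tau(e^{-z\tau})^\alpha + A)^{-1}$ applied to the data. Second, I would split the error into the part coming from the initial data, $A u_0$, and the part coming from $g$; for the latter, integration by parts in time (to shift a derivative onto $g$, producing the $g'(s)$ term and a boundary contribution involving $g(0)$) is what produces the two-term structure on the right-hand side of the claim. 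Third, I would estimate each contour integral: on the ray part of $\Gamma_{\theta,\delta}$ one uses the decay of $e^{zt_n}$ and the bounds \eqref{eqn:resol} (namely $\|(\delta_\tau(e^{-z\tau})^\alpha + A)^{-1}\| \le c|z|^{-\alpha}$ and $\|A(\cdots)^{-1}\| \le c$, valid uniformly in $\tau$ because $\delta_\tau(e^{-z\tau})$ stays in a sector); the error kernel carries an extra factor $O(z\tau)$, and performing the resulting scalar integrals $\int_{\Gamma} e^{\mathrm{Re}(z)t_n}|z|^{\,\cdot\,}\tau\,|dz|$ yields exactly the weights $t_n^{\alpha-1}\tau$ and $\tau(t_n-s)^{\alpha-1}$. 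Combining gives the stated bound.

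The main obstacle I expect is the uniform-in-$\tau$ control of the perturbed symbol: one must verify that for the backward Euler generating function $\delta_\tau(e^{-z\tau}) = (1 - e^{-z\tau})/\tau$, the quantity $\delta_\tau(e^{-z\tau})^\alpha$ remains in a sector $\Sigma_{\phi'}$ with $\phi' < \pi$ and is comparable to $z^\alpha$ (both in magnitude and with an $O(z\tau)$ relative error) for all $z$ on a suitably chosen, $\tau$-dependent truncation of $\Gamma_{\theta,\delta}$ — typically one truncates the contour at $|\mathrm{Im}(z)| \sim \tau^{-1}$ so that $|z\tau|$ stays bounded, and handles the truncated tail separately. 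This sectoriality and comparability of the CQ symbol is precisely the technical heart of convolution-quadrature error analysis; once it is in hand, the remaining estimates are routine applications of \eqref{eqn:resol} and elementary bounds on the scalar contour integrals. Since the lemma is quoted directly from \cite{JinLazarovZhou:SISC2016}, in the write-up I would either reproduce this argument in condensed form or simply invoke that reference, noting that \eqref{eqn:resol} supplies exactly the resolvent hypotheses needed there.
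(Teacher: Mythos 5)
Your proposal is correct and follows essentially the same route as the paper, which does not prove Lemma \ref{lem:error-linear} itself but quotes it from \cite{JinLazarovZhou:SISC2016}; the argument you outline (discrete Laplace/generating-function representation, comparison of the CQ symbol $\delta_\tau(e^{-z\tau})^\alpha$ with $z^\alpha$ on a truncated contour, splitting into the initial-data and source contributions with integration by parts producing the $g(0)$ and $g'$ terms, and the resolvent bounds \eqref{eqn:resol}) is precisely the proof given in that reference. You also correctly identify the genuine technical heart, namely the uniform-in-$\tau$ sectoriality and $O(z\tau)$ comparability of the backward Euler symbol.
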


%Meanwhile, the time stepping scheme satisfies the following discrete fractional Gr\"onwall's inequality.
For $1\le p\leq \infty$ and $X$ being a Banach space, we denote by $\ell^p(X)$ the space of sequences $v^n\in X$, $n=0,1,\dots$,
such that
$\|(v^n)_{n=0}^\infty\|_{\ell^p(X)}<\infty$, where
$$
\|(v^n)_{n=0}^\infty\|_{\ell^p(X)}:=
\left\{
\begin{aligned}
&\bigg(\sum_{n=0}^\infty\tau\|v^n\|_{X}^p\bigg)^{\frac{1}{p}}  &&\mbox{if}\,\,\, 1\le p<\infty,\\
&\sup_{n\ge 0}\|v^n\|_{X} &&\mbox{if}\,\,\, p=\infty .
\end{aligned}\right.
$$
For a finite sequence $v^n\in X$, $n=0,1,\dots,m$, we denote
$\|(v^n)_{n=0}^m\|_{\ell^p(X)}:=\|(v^n)_{n=0}^\infty\|_{\ell^p(X)}$,
by setting $v^n=0$ for $n>m$. The next two lemma shows that the BE-CQ scheme \eqref{eqn:BE-CQ-linear}
satisfies the discrete fractional Gr\"onwall's inequality and discrete maximal $\ell^p$ regularity, whose complete proof is given in
\cite[Theorem 2.2]{JinLiZhou:nonlinear} and \cite[Theorem 5]{JinLiZhou:max-reg}.

\begin{Lemma}\label{lem:disc-Gronwall}
Let $\bar\partial_\tau^\alpha\,$ denote the BE-CQ
given by \eqref{eqn:BE-CQ}.
If $\alpha\in(0,1)$ and $p\in (1/\alpha,\infty)$, and a sequence
$v^n\in X$, $n=0,1,2,\dots$, with $v^0=0$, satisfies
\begin{equation*}
\|(\bar\partial_\tau^\alpha v^n)_{n=1}^m\|_{\ell^p(X)}
\le c_0 \|(v^n)_{n=1}^m\|_{\ell^p(X)}+\sigma ,
\quad\forall\, 0\le m\le N ,
\end{equation*}
for some positive constants $ \kappa$ and $\sigma$, then
there exists a $\tau_0>0$ such that for any $\tau<\tau_0$ there holds
\begin{equation*}
\|(v^n)_{n=1}^N\|_{\ell^\infty(X)}+\|(\bar\partial_\tau^\alpha v^n)_{n=1}^N\|_{\ell^p(X)}
\le c  \sigma ,
\end{equation*}
where the constants $c$ and $\tau_0$ are independent of $\sigma$, $\tau$, $N$, $X$ and $v^n$,
but may depend on $\alpha$, $p$, $c_0$ and $T$.
\end{Lemma}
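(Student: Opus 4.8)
The plan is to read the hypothesis as a bound for the discrete fractional \emph{integration} operator (the inverse of $\bar\partial_\tau^\alpha$ subject to $v^0=0$), to localise it on short time subintervals where that operator carries a small norm, and then to propagate the resulting bound to all of $[0,T]$ by a finite block iteration; the $\ell^\infty$ bound then drops out of the $\ell^p$ bound by a Hölder estimate, and it is precisely here and in controlling the ``memory'' that the restriction $p>1/\alpha$ is used.

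First I would invert the BE--CQ operator. Writing $w^n:=\bar\partial_\tau^\alpha v^n$ and using $v^0=0$, the definition \eqref{eqn:BE-CQ} reads, in generating-function form, $\widehat v(\xi)=\tau^\alpha(1-\xi)^{-\alpha}\widehat w(\xi)$, i.e. $v^n=\tau^\alpha\sum_{j=1}^n b_{n-j}w^j$ with $(b_j)_{j\ge0}$ the (nonnegative) power-series coefficients of $(1-\xi)^{-\alpha}$. Two elementary facts will be used: $0\le b_j\le c(1+j)^{\alpha-1}$, so that $\tau^\alpha\sum_{j=0}^{m-1}b_j\le c\,t_m^{\alpha}$; and $\sum_{j=0}^\infty b_j^{p'}<\infty$ whenever $(\alpha-1)p'<-1$, that is, whenever $p>1/\alpha$ (here $p'$ is the conjugate exponent). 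From the first fact and the weighted discrete Young inequality, $\|(v^n)_{n=1}^m\|_{\ell^p(X)}\le c\,t_m^{\alpha}\,\|(w^n)_{n=1}^m\|_{\ell^p(X)}$ for every $m$. Combined with the hypothesis, and writing $\phi(m):=\|(v^n)_{n=1}^m\|_{\ell^p(X)}$, this gives $\phi(m)\le c\,t_m^{\alpha}\bigl(c_0\phi(m)+\sigma\bigr)$; choosing $\delta>0$ so small that $c\,c_0\,\delta^\alpha\le\tfrac12$ and restricting to $m$ with $t_m\le\delta$, the first term is absorbed and $\phi(m)\le c\,\delta^\alpha\sigma$ on that initial block.

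Next I would continue over $[0,T]$. Fix $\delta$ as above, set $M:=\lfloor\delta/\tau\rfloor$ (which is $\ge1$ once $\tau<\tau_0:=\delta$), and split the indices into consecutive blocks $I_k$ of length $M$. For $n$ in the $(k+1)$-st block decompose $v^n=H_k^n+L_k^n$ with $H_k^n=\tau^\alpha\sum_{j\le kM}b_{n-j}w^j$ and $L_k^n=\tau^\alpha\sum_{j>kM}b_{n-j}w^j$. The local part obeys $\|(L_k^n)_{n\in I_k}\|_{\ell^p(X)}\le c\,\delta^\alpha\|(w^n)_{n\in I_k}\|_{\ell^p(X)}$ by the same Young estimate, while the history part is controlled pointwise by Hölder, $\|H_k^n\|_X\le\tau^{\alpha-1/p}\bigl(\sum_j b_j^{p'}\bigr)^{1/p'}\|(w^n)_{n=1}^{kM}\|_{\ell^p(X)}\le c\,T^{\alpha-1/p}\|(w^n)_{n=1}^{kM}\|_{\ell^p(X)}$ (using $p>1/\alpha$), hence $\|(H_k^n)_{n\in I_k}\|_{\ell^p(X)}\le c\,\delta^{1/p}\|(w^n)_{n=1}^{kM}\|_{\ell^p(X)}$. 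Bounding every $\ell^p$ norm of $w$ on a sub-block by the hypothesis and setting $A_k:=\|(v^n)_{n=1}^{kM}\|_{\ell^p(X)}$, the subadditivity $A_{k+1}\le A_k+\|(v^n)_{n\in I_k}\|_{\ell^p(X)}$ yields $A_{k+1}\le A_k+c\,c_0\delta^\alpha A_{k+1}+c\,c_0\delta^{1/p}A_k+c(\delta^\alpha+\delta^{1/p})\sigma$; absorbing the $A_{k+1}$-term again via $c\,c_0\delta^\alpha\le\tfrac12$ gives $A_{k+1}\le\rho\,A_k+c\sigma$ with a fixed (possibly $>1$, but finite) $\rho$. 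Since the number of blocks $K=\lceil T/\delta\rceil$ depends only on $\alpha,p,c_0,T$, iterating from $A_0=0$ produces $\|(v^n)_{n=1}^N\|_{\ell^p(X)}\le c\sigma$.

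Finally, feeding this into the hypothesis gives $\|(\bar\partial_\tau^\alpha v^n)_{n=1}^N\|_{\ell^p(X)}\le c\sigma$, and the pointwise Hölder bound $\|v^n\|_X\le\tau^{\alpha-1/p}\bigl(\sum_j b_j^{p'}\bigr)^{1/p'}\|(\bar\partial_\tau^\alpha v^n)_{n=1}^N\|_{\ell^p(X)}$, with $\tau^{\alpha-1/p}\le T^{\alpha-1/p}$, delivers the $\ell^\infty$ bound. I expect the continuation step to be the main obstacle: one must extract genuine smallness from the factor $\delta^\alpha$ in the local part while ensuring the memory term $H_k^n$ is a bounded, non-accumulating perturbation rather than one that blows up with $k$ --- and the summability $\sum_j b_j^{p'}<\infty$, equivalently $p>1/\alpha$, is exactly what makes this so. Throughout, every inequality invoked (weighted Young, Hölder, $\ell^p$-subadditivity) is dimension-free, so all constants are automatically independent of $X$, of $N$, and of $v^n$, as required.
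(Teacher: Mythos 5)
The paper does not actually prove this lemma; it is quoted verbatim from \cite{JinLiZhou:nonlinear} (Theorem 2.2 there), and the strategy used in that reference --- invert the BE--CQ via the kernel of $(1-\xi)^{-\alpha}$, establish $\|(\bar\partial_\tau^{-\alpha}w^n)_{n=1}^m\|_{\ell^p(X)}\le c\,t_m^\alpha\|(w^n)_{n=1}^m\|_{\ell^p(X)}$ together with a pointwise $\ell^p\to\ell^\infty$ bound, and then iterate over finitely many subintervals of length $\delta$ with $c\,c_0\,\delta^\alpha\le\tfrac12$ --- is essentially the one you follow. Your block decomposition into a local part (absorbed via smallness of $\delta^\alpha$) and a history part (controlled by H\"older and fed into a finite recursion $A_{k+1}\le\rho A_k+c\sigma$) is sound, and the constants you track are indeed independent of $X$, $N$, $\tau$ and $v^n$.

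There is, however, one concrete error you should fix: the claim that $\sum_{j=0}^\infty b_j^{p'}<\infty$ whenever $(\alpha-1)p'<-1$, ``that is, whenever $p>1/\alpha$,'' has the equivalence backwards. Since $b_j\sim j^{\alpha-1}$, the condition $(\alpha-1)p'<-1$ is equivalent to $p<1/\alpha$; in the regime of the lemma, $p>1/\alpha$, one has $(\alpha-1)p'>-1$ and the series \emph{diverges}, so the H\"older bounds you write for $H_k^n$ and for the final $\ell^\infty$ estimate contain an infinite constant as literally stated. The repair is standard and does not change your architecture: use the partial sums $\sum_{j=0}^{n-1}b_j^{p'}\le c\,n^{(\alpha-1)p'+1}$, whose exponent is positive precisely because $p>1/\alpha$, so that
\begin{equation*}
\tau^{\alpha-1/p}\Big(\sum_{j=0}^{n-1}b_j^{p'}\Big)^{1/p'}\le c\,\tau^{\alpha-1/p}\,n^{\alpha-1/p}=c\,t_n^{\alpha-1/p}\le c\,T^{\alpha-1/p}.
\end{equation*}
This is the true role of the hypothesis $p>1/\alpha$: it makes the exponent $\alpha-1/p$ positive so that the pointwise bound is uniform in $\tau$ (for $p<1/\alpha$ the kernel sum would be finite but the prefactor $\tau^{\alpha-1/p}$ would blow up as $\tau\to0$). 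With this one-line correction your proof goes through and coincides in substance with the argument in the cited source.
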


\begin{Lemma}\label{lem:max-reg}
The BE-CQ scheme \eqref{eqn:BE-CQ-linear} with $u_0 = 0$ has the following maximal $\ell^p$-regularity
\begin{equation*}
  \|(\bar\partial_\tau^\alpha u^n)_{n=1}^m\|_{\ell^p(L^2\II)}+ \|(Au^n)_{n=1}^m\|_{\ell^p(L^2\II)}\leq c\|(f^n)_{n=1}^m\|_{\ell^p(L^2\II)},
\end{equation*}
where the constant $c$ is independent of $N, \tau$.
\end{Lemma}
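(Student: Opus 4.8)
The plan is to recast the discrete problem \eqref{eqn:BE-CQ-linear} (with $u_0=0$) as a discrete operator-valued Fourier multiplier problem on $\ell^p(L^2\II)$ and then to invoke a discrete operator-valued Fourier multiplier theorem of Blunck type. The decisive simplification is that $L^2\II$ is a Hilbert space, so the $\mathcal R$-boundedness hypotheses of that theorem reduce to plain uniform boundedness of the relevant operator families.

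First I would pass to the generating function. Set $\tilde u(\zeta)=\sum_{n\ge1}u^n\zeta^n$ and $\tilde f(\zeta)=\sum_{n\ge1}f^n\zeta^n$; since the BE-CQ weights $\omega_j$ in \eqref{eqn:BE-CQ} have generating function $(1-\zeta)^\alpha$, the scheme becomes
\[
\big(\tau^{-\alpha}(1-\zeta)^\alpha+A\big)\tilde u(\zeta)=\tilde f(\zeta),\qquad |\zeta|<1,
\]
so that $A\tilde u(\zeta)=M_\tau(\zeta)\tilde f(\zeta)$ with $M_\tau(\zeta):=A\big(\tau^{-\alpha}(1-\zeta)^\alpha+A\big)^{-1}$. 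Taking the radial limit $\zeta=e^{-\mathrm i\vartheta}$, $\vartheta\in(-\pi,\pi)$, and using Parseval, the map $(f^n)\mapsto(Au^n)$ is exactly the multiplier operator with symbol $\vartheta\mapsto M_\tau(e^{-\mathrm i\vartheta})$; bounding it on $\ell^p(L^2\II)$ gives the estimate for $(Au^n)$.

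Next I would check the multiplier hypotheses, uniformly in $\tau$. The geometric key is that for $\vartheta\in(-\pi,\pi)$ the point $1-e^{-\mathrm i\vartheta}$ lies in the closed right half-plane with $|\arg(1-e^{-\mathrm i\vartheta})|\le\pi/2$; hence $(1-e^{-\mathrm i\vartheta})^\alpha$, and — since $\tau^{-\alpha}>0$ — also $\tau^{-\alpha}(1-e^{-\mathrm i\vartheta})^\alpha$, lie in the fixed sector $\Sigma_{\alpha\pi/2}$ with $\alpha\pi/2<\pi/2$. Applying the resolvent estimate \eqref{eqn:resol} with any fixed $\phi\in(\alpha\pi/2,\pi)$ yields $\sup_{\vartheta}\|M_\tau(e^{-\mathrm i\vartheta})\|_{L^2\II\to L^2\II}\le c$, independently of $\tau$. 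For the Marcinkiewicz/derivative condition one differentiates $M_\tau$ in $\vartheta$, uses the resolvent identity together with the second bound in \eqref{eqn:resol}, and verifies that the weight $(e^{-\mathrm i\vartheta}-1)(e^{-\mathrm i\vartheta}+1)\,\mathrm d/\mathrm d\vartheta$ absorbs the degeneracy of $(1-e^{-\mathrm i\vartheta})^\alpha$ near $\vartheta=0$ (the discrete analogue of $|z|\to0$) and its growth near $\vartheta=\pm\pi$ (the analogue of $|z|\to\infty$), again with $\tau$-independent constants. Since in a Hilbert space uniform boundedness equals $\mathcal R$-boundedness, the discrete multiplier theorem gives $\|(Au^n)_{n=1}^m\|_{\ell^p(L^2\II)}\le c\|(f^n)_{n=1}^m\|_{\ell^p(L^2\II)}$ for all $p\in(1,\infty)$, with $c$ independent of $N$ and $\tau$.

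Finally, the bound for the fractional difference is immediate from the scheme: $\bar\partial_\tau^\alpha u^n=f^n-Au^n$, so $\|(\bar\partial_\tau^\alpha u^n)_{n=1}^m\|_{\ell^p(L^2\II)}\le\|(f^n)_{n=1}^m\|_{\ell^p(L^2\II)}+\|(Au^n)_{n=1}^m\|_{\ell^p(L^2\II)}$, and the claim follows. I expect the main obstacle to be the uniform-in-$\tau$ verification of the derivative condition: one must simultaneously control the symbol where $(1-\zeta)^\alpha$ degenerates and where it is largest, and ensure that no $\tau$-dependent constant enters; this is the technical core carried out in detail in \cite[Theorem~5]{JinLiZhou:max-reg}.
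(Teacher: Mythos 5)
Your proposal is correct and follows essentially the same route as the paper's source for this lemma: the paper gives no proof of its own but defers to \cite[Theorem 5]{JinLiZhou:max-reg}, whose argument is exactly the Blunck-type discrete operator-valued multiplier theorem applied to the symbol $A\big(\tau^{-\alpha}(1-\zeta)^\alpha+A\big)^{-1}$, with the $\mathcal R$-boundedness hypotheses reduced to uniform sectorial resolvent bounds because $L^2(\Omega)$ is a Hilbert space. Nothing further is needed.
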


\subsection{Error analysis of the CS scheme.}
With the help of error estimate in Lemma \ref{lem:error-linear}, the discrete fractional
Gr\"onwall's inequality in Lemma \ref{lem:disc-Gronwall}, the discrete maximal $\ell^p$ regularity in Lemma \ref{lem:max-reg} and the regularity estimate in Section \ref{sec:PDE},
we can derive the error analysis of the CS scheme \eqref{eqn:cs}.
To this end,
%we rewrite the fractional Allen-Cahn equation into the form
%\eqref{eqn:reform}, and reformulate the convex-splitting scheme by
%\begin{equation}\label{eqn:cs-reform}
%\bar\partial_\tau^\alpha u_n + Au_n=  u_{n-1}-(u_{n})^3, \qquad \text{for}~~ 1 \le n\le N,
%\end{equation}
%
%To derive the error $\| u(t_n)- u_n\|_{L^2\II}$,
we use the splitting that
\begin{equation}\label{eqn:error-split}
 u(t_n)- u_n =  (u(t_n) - v_n) + (v_n  - u_n) =: \theta_n + \rho_n .
\end{equation}
where $v_n$ satisfies the time stepping scheme
\begin{align}\label{eqn:vn-1}
 \bar\partial_\tau^\alpha  v_n  + A  v_n = f(u(t_n)),\qquad \text{with}~~v_0 = u_0,
\end{align}

\begin{Lemma}\label{lem:cs-01}
Let $u_0\in H^2\II\cap H_0^1\II$ and $u(t)$ be the solution of the time-fractional Allen-Cahn equation \eqref{eqn:phase-field}.
Then $\theta_n = u(t_n) - v_n$, where $v_n$ is the solutions of \eqref{eqn:vn-1},  satisfies
\begin{equation}\label{error-estimate-vn}
\max_{1\le n\le N}  \|\theta_n\|_{L^2(\Omega)}\le c\tau^\alpha .
\end{equation}
where the constant $c$ may depends on $\alpha, \kappa, T, u_0$ but is independent of $u$ and $\tau$.
\end{Lemma}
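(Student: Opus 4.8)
The plan is to compare the true solution $u(t)$, which by \eqref{re-form-nonlinear} satisfies $u(t) = F(t)u_0 + \int_0^t E(t-s) f(u(s))\,\d s$ with a source term $g(t) := f(u(t))$, against its BE-CQ discretization $v_n$ defined by \eqref{eqn:vn-1}, and to invoke the linear error estimate of Lemma \ref{lem:error-linear} directly with this choice of $g$. The key point is that $v_n$ is the implicit BE-CQ scheme \eqref{eqn:BE-CQ-linear} applied to the \emph{linear} equation \eqref{eqn:linear} with right-hand side $g(t) = f(u(t))$ and the same initial datum $u_0$, so Lemma \ref{lem:error-linear} applies verbatim and gives
\begin{equation*}
\|\theta_n\|_{L^2\II} \le c\, t_n^{\alpha-1}\tau\, \|Au_0 + g(0)\|_{L^2\II} + c\tau \int_0^{t_n} (t_n-s)^{\alpha-1}\|g'(s)\|_{L^2\II}\,\d s.
\end{equation*}

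Next I would bound the two terms on the right. For the first term, $t_n^{\alpha-1}\tau \le \tau^\alpha$ since $t_n \ge \tau$ and $\alpha - 1 < 0$, and $\|Au_0 + g(0)\|_{L^2\II} = \|{-}\kappa^2\Delta u_0 + f(u_0)\|_{L^2\II} \le c$ because $u_0 \in H^2\II\cap H_0^1\II$ and $|u_0|\le 1$ (so $f(u_0)$ is bounded). For the second term I need $\|g'(s)\|_{L^2\II} = \|\frac{\d}{\d s} f(u(s))\|_{L^2\II} = \|f'(u(s)) u_t(s)\|_{L^2\II} \le \|f'(u(s))\|_{L^\infty\II}\|u_t(s)\|_{L^2\II}$. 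By Theorem \ref{thm:max} we have $\|u(s)\|_{L^\infty\II}\le 1$, hence $\|f'(u(s))\|_{L^\infty\II} = \|1 - 3u(s)^2\|_{L^\infty\II} \le c$; and by the regularity estimate \eqref{reg-PDE3} with $s=0$, $\|u_t(s)\|_{L^2\II} \le c\, s^{\alpha-1}$. Therefore $\|g'(s)\|_{L^2\II} \le c\, s^{\alpha-1}$, and
\begin{equation*}
\tau \int_0^{t_n} (t_n - s)^{\alpha-1} s^{\alpha-1}\,\d s = \tau\, t_n^{2\alpha-1} B(\alpha,\alpha) \le c\,\tau\, T^{\alpha-1} t_n^\alpha \cdot t_n^{-\alpha}\,;
\end{equation*}
more carefully, $t_n^{2\alpha-1} = t_n^{\alpha-1}\cdot t_n^{\alpha} \le \tau^{\alpha-1}\cdot t_n^\alpha$ when $\alpha < 1$, wait — that is the wrong direction. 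The clean estimate is $\tau\, t_n^{2\alpha-1} \le \tau^{\alpha}\cdot t_n^{\alpha-1}\cdot t_n^{\alpha} \cdot \tau^{1-\alpha}/t_n^{?}$; instead simply note $t_n^{2\alpha-1}\le T^\alpha t_n^{\alpha-1} \le T^\alpha \tau^{\alpha-1}$, so $\tau\, t_n^{2\alpha-1}\le T^\alpha \tau^\alpha$. Combining both contributions yields $\max_{1\le n\le N}\|\theta_n\|_{L^2\II}\le c\tau^\alpha$, with $c$ depending on $\alpha,\kappa,T,u_0$ but not on $\tau$ or on further smoothness of $u$.

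The main obstacle — and the reason this is stated as a separate lemma rather than a one-liner — is verifying that the source term $g(t) = f(u(t))$ has exactly the regularity required by Lemma \ref{lem:error-linear}, namely that $g' \in L^1(0,T;L^2\II)$ with the integrable singularity $\|g'(s)\|_{L^2\II}\lesssim s^{\alpha-1}$ at $t=0$. This hinges on two earlier results applied in tandem: the maximum principle (Theorem \ref{thm:max}) to control $\|f'(u)\|_{L^\infty}$, and the sharp regularity \eqref{reg-PDE3} of $\partial_t u$ in Theorem \ref{THM:Reg} to control $\|u_t\|_{L^2\II}$ near $t=0$. Once these are in place the Beta-function integral is routine and the $O(\tau^\alpha)$ rate follows; no Gr\"onwall argument is needed here since \eqref{eqn:vn-1} has a \emph{frozen} source $f(u(t_n))$ rather than $f(v_n)$. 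The remaining piece, estimating $\rho_n = v_n - u_n$, will require the discrete Gr\"onwall and maximal $\ell^p$ regularity and is handled subsequently.
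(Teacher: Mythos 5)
Your proposal is correct and follows essentially the same route as the paper: apply Lemma \ref{lem:error-linear} with $g(t)=f(u(t))$, bound $\|g'(s)\|_{L^2(\Omega)}\le cs^{\alpha-1}$ via the uniform bound on $f'(u)$ and the regularity estimate \eqref{reg-PDE3}, and evaluate the resulting convolution integral to get $c\tau t_n^{\alpha-1}\le c\tau^{\alpha}$. The only cosmetic difference is that you invoke the maximum principle (Theorem \ref{thm:max}, which needs $|u_0|\le 1$) to control $\|f'(u)\|_{L^\infty}$, whereas the paper uses the $L^\infty((0,T)\times\Omega)$ bound from Theorem \ref{thm:exists}, consistent with the lemma's stated hypotheses.
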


\begin{proof}
Using Lemma \ref{lem:error-linear}, we have the following estimate that
\begin{align*}%\label{eqn:theta-1}
\begin{aligned}
\|u_h(t_n)-v_h^n\|_{L^2(\Omega)}
\le& ct_n^{\alpha-1}\tau\bigg(\|A u_0 + g(u_0)\|_{L^2(\Omega)}\bigg) +c\tau\int_0^{t_n}(t_n-s)^{\alpha-1}\|f'(u(s))\partial_su(s)\|_{L^2(\Omega)}\d s. \\
\end{aligned}
\end{align*}
Using the fact that $u\in L^\infty((0,T)\times \Omega)$ by Theorem \ref{thm:exists}, we have $\| f'(u(s)) \|_{L^2\II} \le c$, and hence
\begin{equation*}
\|f'(u(s))\partial_su(s)\|_{L^2(\Omega)} \le c \|\partial_su(s)\|_{L^2(\Omega)} \le c s^{\alpha-1},
\end{equation*}
where the last inequality is given by \eqref{reg-PDE3}. As a result, we derive that
\begin{align*}%\label{eqn:theta-1}
\begin{aligned}
\|u_h(t_n)-v_h^n\|_{L^2(\Omega)}
\le  ct_n^{\alpha-1}\tau  +c\tau\int_0^{t_n}(t_n-s)^{\alpha-1}s^{\alpha-1}\d s \le c\tau (t_n^{\alpha-1} + t_n^{2\alpha-1}) \le c\tau t_n^{\alpha-1} \le c\tau^\alpha. \\
\end{aligned}
\end{align*}
\end{proof}

The next lemma gives the bound of $\rho_n$. %{\color{blue}$|u_0| \le 1$ might not be necessary, so we shall prove the discrete regularity
%that $u_n\in \ell^\infty(0,T;L^\infty\II)$}

\begin{Lemma}\label{lem:cs-02}
Suppose that $u_0\in H^2\II\cap H_0^1\II$  and $|u_0| \le 1$.
Let $v_n$ be the solution of   \eqref{eqn:vn-1}  and $u_n$ be  the solution of  the CS scheme \eqref{eqn:cs}. Then $\rho_n = v_n-u_n$  satisfies
\begin{equation}\label{error-estimate}
\max_{1\le n\le N}  \|\rho_n\|_{L^2(\Omega)}\le c\tau^\alpha .
\end{equation}
where the constant $c$ may depends on $\alpha, \kappa, T, u_0$ but is independent of $u$ and $\tau$.
\end{Lemma}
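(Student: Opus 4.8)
The plan is to set up an error equation for $\rho_n = v_n-u_n$ that has the form of the linear scheme \eqref{eqn:BE-CQ-linear} with vanishing initial value, and then to combine the discrete maximal $\ell^p$-regularity of Lemma \ref{lem:max-reg} with the discrete fractional Gr\"onwall inequality of Lemma \ref{lem:disc-Gronwall}. Subtracting the CS scheme \eqref{eqn:cs} from \eqref{eqn:vn-1}, using $f(s)=s-s^3$ and the splitting $u(t_n)-u_{n-1}=\big(u(t_n)-u(t_{n-1})\big)+\theta_{n-1}+\rho_{n-1}$ together with $u_n^3-u(t_n)^3=-(\theta_n+\rho_n)P_n$, one obtains
\begin{equation*}
\bar\partial_\tau^\alpha\rho_n+A\rho_n = d_n + \rho_{n-1} - P_n\rho_n,\qquad \rho_0=0,
\end{equation*}
where $P_n:=u(t_n)^2+u(t_n)u_n+u_n^2$ and $d_n:=\big(u(t_n)-u(t_{n-1})\big)+\theta_{n-1}-P_n\theta_n$ is independent of $\rho$. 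Since $|u_0|\le1$, Theorem \ref{thm:max} gives $|u(t_n)|\le1$ and Theorem \ref{thm:dmp-cs} gives $|u_n|\le1$, whence $0\le P_n\le 3$ pointwise.

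With $\rho_0=0$, I would first invoke the maximal $\ell^p$-regularity of Lemma \ref{lem:max-reg} (for a fixed $p$ to be chosen) to get
\begin{equation*}
\|(\bar\partial_\tau^\alpha\rho_n)_{n=1}^m\|_{\ell^p(L^2\II)}\le c\|(d_n)_{n=1}^m\|_{\ell^p(L^2\II)}+c\|(\rho_{n-1}-P_n\rho_n)_{n=1}^m\|_{\ell^p(L^2\II)},
\end{equation*}
and then, using $|P_n|\le3$ together with $\rho_0=0$ to absorb the shifted index, bound the last term by $c_0\|(\rho_n)_{n=1}^m\|_{\ell^p(L^2\II)}$. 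This puts the hypothesis of Lemma \ref{lem:disc-Gronwall} in place (valid since we will take $p>1/\alpha$), and its conclusion gives $\max_{1\le n\le N}\|\rho_n\|_{L^2\II}\le c\|(d_n)_{n=1}^N\|_{\ell^p(L^2\II)}$.

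It then remains to show $\|(d_n)_{n=1}^N\|_{\ell^p(L^2\II)}\le c\tau^\alpha$. For the $\theta$-part, Lemma \ref{lem:cs-01} yields $\|\theta_n\|_{L^2\II}\le c\tau^\alpha$ for every $n$ (and $\theta_0=0$), so $\|\theta_{n-1}-P_n\theta_n\|_{L^2\II}\le c\tau^\alpha$. For the increment, the regularity estimate \eqref{reg-PDE3} with $s=0$ gives $\|\partial_tu(t)\|_{L^2\II}\le ct^{\alpha-1}$, hence
\begin{equation*}
\|u(t_n)-u(t_{n-1})\|_{L^2\II}\le c\int_{t_{n-1}}^{t_n}s^{\alpha-1}\,\d s=\tfrac{c}{\alpha}\big(t_n^\alpha-t_{n-1}^\alpha\big).
\end{equation*}
Using $t_1^\alpha=\tau^\alpha$ and $t_n^\alpha-t_{n-1}^\alpha\le\alpha\tau t_{n-1}^{\alpha-1}$ for $n\ge2$, a direct summation gives
\begin{equation*}
\|(d_n)_{n=1}^N\|_{\ell^p(L^2\II)}^p\le c\tau^{1+\alpha p}\Big(1+\sum_{j=1}^{N-1}j^{(\alpha-1)p}\Big)+cT\tau^{\alpha p},
\end{equation*}
and choosing $p>\max\{1/\alpha,\,1/(1-\alpha)\}$ renders the sum $\sum_j j^{(\alpha-1)p}$ finite, so that $\|(d_n)_{n=1}^N\|_{\ell^p(L^2\II)}\le c(\tau^{\alpha+1/p}+\tau^\alpha)\le c\tau^\alpha$, which finishes the proof.

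I expect the main difficulty to lie in the coupling step: one must be sure the $\rho$-dependent part of the right-hand side, $\rho_{n-1}-P_n\rho_n$, can be absorbed into the Gr\"onwall term — this is precisely where the two maximum principles are indispensable, since they keep $P_n$ uniformly bounded — and one must pick $p$ large enough that both the Gr\"onwall hypothesis ($p>1/\alpha$) and the convergence of the discrete sum ($p>1/(1-\alpha)$) hold at once. A secondary but conceptually important point is the extrapolation lag $u(t_n)-u(t_{n-1})$ generated by the explicit concave term in the CS scheme: only the limited bound $\|\partial_tu(t)\|_{L^2\II}\le ct^{\alpha-1}$ is available near $t=0$, so this contribution is genuinely $O(\tau^\alpha)$ rather than $O(\tau)$, and it is the $\ell^p$-in-time summation — not a pointwise estimate — that delivers the clean rate $\tau^\alpha$.
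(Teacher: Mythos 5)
Your proposal is correct and follows essentially the same route as the paper: the same error equation for $\rho_n$ with $\rho_0=0$, discrete maximal $\ell^p$-regularity to bound $\bar\partial_\tau^\alpha\rho_n$, the two maximum principles to control the cubic difference, and the discrete fractional Gr\"onwall inequality with $p>1/\alpha$ to close the estimate. The only difference is that you work out explicitly the $\ell^p$-summation of the lagged increment $u(t_n)-u(t_{n-1})$ (requiring also $p>1/(1-\alpha)$), a detail the paper states without proof; your computation of that step is correct.
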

\begin{proof}
We note that $\rho_n$ satisfies the following discrete problem
\begin{equation*}
\bar\partial_\tau^\alpha \rho_n + A \rho_n
=(u(t_n) - u_{n-1}) + (u(t_n)^3-(u_n)^3) ,\qquad \text{with} ~~\rho_0 = 0.
\end{equation*}
By applying the discrete maximal $\ell^p$-regularity in Lemma \ref{lem:max-reg}, we obtain that for all $1<p<\infty$:
\begin{equation*}
  \begin{aligned}
    \|(\bar\partial_\tau^\alpha \rho_n)_{n=1}^m\|_{\ell^p(L^2(\Omega))}
&\le c\|(u(t_n) - u_{n-1})_{n=1}^m\|_{\ell^p(L^2(\Omega))}  + c\|(u(t_n)^3-(u_n)^3)_{n=1}^m\|_{\ell^p(L^2(\Omega))} = \sum_{i=1}^2 I_i.
  \end{aligned}
\end{equation*}
%The first term $I_1$ can be bounded by
% \begin{equation*}
%  \begin{aligned}
%   I_1 \le c\|(\theta_n)_{n=1}^m\|_{\ell^p(L^2(\Omega))}+ c\|(\rho_n)_{n=1}^m\|_{\ell^p(L^2(\Omega))}
%   \le c\tau^\alpha + c\|(\rho_n)_{n=1}^m\|_{\ell^p(L^2(\Omega))}.
%  \end{aligned}
%\end{equation*}
Using the regularity estimate \eqref{reg-PDE3}, we have an estimate for $I_1$
\begin{equation*}
  \begin{aligned}
 I_1 &\le c\|(u(t_{n-1}) - u_{n-1})_{n=1}^m\|_{\ell^p(L^2(\Omega))} + c\|(u(t_{n}) - u(t_{n-1}))_{n=1}^m\|_{\ell^p(L^2(\Omega))} \\
 &\le c\tau^\alpha + c\|(\rho_n)_{n=1}^m\|_{\ell^p(L^2(\Omega))} + c\Big|\Big|\Big(\int_{t_{n-1}}^{t_n} u'(s)\,ds \Big)_{n=1}^m\Big|\Big|_{\ell^p(L^2(\Omega))} \\
 &\le c\tau^\alpha + c\|(\rho_n)_{n=1}^m\|_{\ell^p(L^2(\Omega))}.
  \end{aligned}
\end{equation*}
Finally, using the fact that $|u(x,t)|\le 1$ and $|u^n|\le 1$ respectively by Theorems \ref{thm:max} and \ref{thm:dmp-cs}, we derive the bound for the second term
\begin{equation*}
  \begin{aligned}
  I_2 \le c \|(u(t_n) - u_n )_{n=1}^m\|_{\ell^p(L^2(\Omega))} &\le c\|(\theta_n)_{n=1}^m\|_{\ell^p(L^2(\Omega))}+ c\|(\rho_n)_{n=1}^m\|_{\ell^p(L^2(\Omega))}\\
  &\le c\tau^\alpha + c\|(\rho_n)_{n=1}^m\|_{\ell^p(L^2(\Omega))}.%c\tau^\alpha + c\|(\rho_n)_{n=1}^m\|_{\ell^p(L^2(\Omega))}.
  \end{aligned}
\end{equation*}
Combining the preceding three estimates, we arrive at
\begin{equation*}
  \begin{aligned}
    \|(\bar\partial_\tau^\alpha \rho_n)_{n=1}^m\|_{\ell^p(L^2(\Omega))} \le  c\tau^\alpha + c\|(\rho_n)_{n=1}^m\|_{\ell^p(L^2(\Omega))}.
  \end{aligned}
\end{equation*}
By choosing $p>1/\alpha$ and applying the discrete Gr\"onwall's inequality in Lemma \ref{lem:disc-Gronwall}), we obtain
\begin{align*}
\max_{1\le n\le N}\|\rho_n\|_{L^2(\Omega)}\le c\tau^\alpha .
\end{align*}
This completes the proof of the lemma.
\end{proof}

Combing Lemmas  \ref{lem:cs-01} and \ref{lem:cs-02}, we have the error estimate for the CS scheme \eqref{eqn:cs}.

\begin{theorem}\label{thm:error-cs}
Suppose that $u_0\in H^2\II\cap H_0^1\II$  and $|u_0| \le 1$.  Let $u$ be the solution of the time-fractional Allen-Cahn equation \eqref{eqn:phase-field}.
Then $u_n$, the solution of the CS scheme \eqref{eqn:cs}, satisfies the error estimate
\begin{equation}\label{error-estimate-cs}
\max_{1\le n\le N}  \|u (t_n)-u_n\|_{L^2(\Omega)}\le c\tau^\alpha .
\end{equation}
where the constant $c$ may depends on $\alpha, \kappa, T, u_0$ but is independent of $u$ and $\tau$.
\end{theorem}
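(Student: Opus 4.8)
The plan is to obtain \eqref{error-estimate-cs} by a triangle inequality applied to the two auxiliary estimates already in place. I would start from the splitting \eqref{eqn:error-split}, writing $u(t_n)-u_n=\theta_n+\rho_n$, where $\theta_n=u(t_n)-v_n$ is the consistency error of the \emph{linearized} BE-CQ scheme \eqref{eqn:vn-1} driven by the exact nonlinearity $f(u(t_n))$, and $\rho_n=v_n-u_n$ is the defect between that linearized scheme and the genuine CS scheme \eqref{eqn:cs}. Then $\max_{1\le n\le N}\|u(t_n)-u_n\|_{L^2(\Omega)}\le \max_{1\le n\le N}\|\theta_n\|_{L^2(\Omega)}+\max_{1\le n\le N}\|\rho_n\|_{L^2(\Omega)}$, so it suffices to control the two pieces separately.

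For the first piece I would simply invoke Lemma \ref{lem:cs-01}: since $u_0\in H^2\II\cap H_0^1\II$, Theorem \ref{THM:Reg} furnishes the sharp bound $\|\partial_t u(s)\|_{L^2(\Omega)}\le c s^{\alpha-1}$ together with $u\in L^\infty((0,T)\times\Omega)$ from Theorem \ref{thm:exists}, hence $\|f'(u(s))\partial_s u(s)\|_{L^2(\Omega)}\le c s^{\alpha-1}$. Inserting this into the linear error bound of Lemma \ref{lem:error-linear} and integrating the weakly singular kernel $(t_n-s)^{\alpha-1}s^{\alpha-1}$ yields $\max_{1\le n\le N}\|\theta_n\|_{L^2(\Omega)}\le c\tau^\al$.

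For the second piece I would invoke Lemma \ref{lem:cs-02}, where the hypothesis $|u_0|\le 1$ becomes essential: the maximum principles of Theorems \ref{thm:max} and \ref{thm:dmp-cs} give $|u(x,t)|\le 1$ and $|u_n|\le 1$, so the cubic nonlinearity is effectively Lipschitz and $\|u(t_n)^3-u_n^3\|_{L^2(\Omega)}\le c\|u(t_n)-u_n\|_{L^2(\Omega)}\le c(\|\theta_n\|+\|\rho_n\|)$. Writing the error equation satisfied by $\rho_n$ (with $\rho_0=0$), applying the discrete maximal $\ell^p$ regularity of Lemma \ref{lem:max-reg}, and using the already-controlled $\theta_n$ together with $u(t_n)-u(t_{n-1})=\int_{t_{n-1}}^{t_n}u'(s)\,ds$ to absorb the lag term $u_{n-1}$ into the $O(\tau^\al)$ consistency contribution, one arrives at $\|(\bar\partial_\tau^\al\rho_n)_{n=1}^m\|_{\ell^p(L^2(\Omega))}\le c\tau^\al+c\|(\rho_n)_{n=1}^m\|_{\ell^p(L^2(\Omega))}$. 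Choosing $p>1/\al$ and applying the discrete fractional Gr\"onwall inequality of Lemma \ref{lem:disc-Gronwall} then closes this to $\max_{1\le n\le N}\|\rho_n\|_{L^2(\Omega)}\le c\tau^\al$.

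Adding the two bounds gives \eqref{error-estimate-cs}. At the level of this theorem the argument is thus just the triangle inequality; the genuinely delicate points have already been resolved in the preceding lemmas and in Section \ref{sec:PDE} — namely, establishing the sharp $t^{\alpha-1}$ blow-up of $\partial_t u$ at $t=0$ without extra regularity assumptions, and closing the Gr\"onwall loop for $\rho_n$ in the presence of the concave-part lag $u_{n-1}$ rather than $u_n$ in \eqref{eqn:cs}, which is the one place where the discrete maximal $\ell^p$-regularity and the uniform $L^\infty$ bounds must be used in tandem. I expect no new obstacle here beyond bookkeeping.
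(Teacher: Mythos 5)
Your proposal is correct and follows the paper's own route exactly: the theorem is obtained by the triangle inequality applied to the splitting \eqref{eqn:error-split}, combining the bound on $\theta_n$ from Lemma \ref{lem:cs-01} with the bound on $\rho_n$ from Lemma \ref{lem:cs-02}. Your summaries of how those two lemmas are proved also match the paper's arguments.
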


\subsection{Error analysis of the weighted time stepping scheme.}
Now we shall turn to the error estimate for the WCS scheme \eqref{eqn:wcs} and the LWS scheme \eqref{eqn:lws}.
Besides the useful tools applied in the previous section, we also need the following lemma
for the bound of $\bar\partial_\tau^\alpha u(t_n)$.

\begin{Lemma}\label{lem:partu}
Let $u_0\in H^2\II\cap H_0^1\II$ and $u$ be the solution of the time-fractional Allen-Cahn equation \eqref{eqn:phase-field}.
Meanwhile, let $\bar\partial_\tau^\alpha$  denote the BE-CQ given by \eqref{eqn:BE-CQ}.
Then we have for $n\ge 1$
\begin{equation*}%\label{error-estimate}
  \max_{1 \le n\le N} \|\bar\partial_\tau^\alpha u(t_n)\|_{L^2(\Omega)}\le c .
\end{equation*}
\end{Lemma}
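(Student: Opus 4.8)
The plan is to bound $\bar\partial_\tau^\alpha u(t_n)$ directly from its definition \eqref{eqn:BE-CQ}, since the classical consistency error estimate of the BE-CQ approximation would require more temporal regularity of $u$ than Theorem~\ref{THM:Reg} supplies. Writing $U_j:=u(t_j)-u_0$ (so $U_0=0$), I would first use the case $s=0$ of \eqref{reg-PDE3}, i.e. $\|u_t(s)\|_{L^2(\Omega)}\le c\,s^{\alpha-1}$, together with $u\in C([0,T];L^2(\Omega))$ from \eqref{reg-PDE}, to write $U_j=\sum_{k=1}^j\delta_k$ with $\delta_k:=u(t_k)-u(t_{k-1})=\int_{t_{k-1}}^{t_k}u_t(s)\,\d s$ (the integrand being absolutely Bochner-integrable near $0$ because $\alpha>0$). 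Interchanging the order of summation then recasts
\begin{equation*}
\bar\partial_\tau^\alpha u(t_n)=\tau^{-\alpha}\sum_{j=1}^n\omega_{n-j}U_j=\tau^{-\alpha}\sum_{k=1}^n\varpi_{n-k}\,\delta_k,\qquad \varpi_m:=\sum_{l=0}^m\omega_l,
\end{equation*}
which reduces the estimate to controlling the partial sums $\varpi_m$ of the quadrature weights.

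I would next use that $\sum_{m\ge0}\varpi_m\zeta^m=(1-\zeta)^{\alpha}/(1-\zeta)=(1-\zeta)^{\alpha-1}$, so that $\varpi_m=\prod_{l=1}^m\frac{l-\alpha}{l}>0$ for $\alpha\in(0,1)$, and, since $\log(1-\alpha/l)\le-\alpha/l$,
\begin{equation*}
\log\varpi_m\le-\alpha\sum_{l=1}^m\frac1l\le-\alpha\log(1+m),\qquad\text{hence}\qquad 0<\varpi_m\le(1+m)^{-\alpha}.
\end{equation*}
Feeding in the bounds $\|\delta_1\|_{L^2(\Omega)}\le c\int_0^\tau s^{\alpha-1}\,\d s=c\alpha^{-1}\tau^\alpha$ and, for $k\ge2$, $\|\delta_k\|_{L^2(\Omega)}\le c\,\tau\,t_{k-1}^{\alpha-1}=c\,\tau^\alpha(k-1)^{\alpha-1}$, the factors $\tau^{-\alpha}$ and $\tau^\alpha$ cancel and one is left with
\begin{equation*}
\|\bar\partial_\tau^\alpha u(t_n)\|_{L^2(\Omega)}\le c\,\varpi_{n-1}+c\sum_{j=1}^{n-1}\varpi_{n-1-j}\,j^{\alpha-1}\le c+c\sum_{j=1}^{n-1}(n-j)^{-\alpha}j^{\alpha-1}.
\end{equation*}

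The one genuinely nontrivial point is then to show that the discrete convolution $S_n:=\sum_{j=1}^{n-1}(n-j)^{-\alpha}j^{\alpha-1}$ is bounded uniformly in $n$ — a discrete analogue of $\int_0^n(n-s)^{-\alpha}s^{\alpha-1}\,\d s=\Gamma(\alpha)\Gamma(1-\alpha)$. For $n\ge3$ I would split $S_n$ at $j=\lceil n/2\rceil$: when $j\le\lceil n/2\rceil$ one has $(n-j)^{-\alpha}\le c\,n^{-\alpha}$ and $\sum_{j\le\lceil n/2\rceil}j^{\alpha-1}\le c\,n^{\alpha}$, while when $j>\lceil n/2\rceil$ one has $j^{\alpha-1}\le c\,n^{\alpha-1}$ and $\sum_{j>\lceil n/2\rceil}(n-j)^{-\alpha}=\sum_{1\le m\le n/2}m^{-\alpha}\le c\,n^{1-\alpha}$; each half contributes $O(1)$, and the finitely many remaining small values of $n$ are trivially bounded. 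Hence $S_n\le c$, giving $\max_{1\le n\le N}\|\bar\partial_\tau^\alpha u(t_n)\|_{L^2(\Omega)}\le c$ with $c$ independent of $\tau$ and $N$, which is the assertion. I expect the bookkeeping for this convolution sum (and the elementary estimate $\varpi_m\le(1+m)^{-\alpha}$) to be the only step requiring care.
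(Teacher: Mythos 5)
Your proof is correct and follows essentially the same route as the paper: both rewrite $\bar\partial_\tau^\alpha u(t_n)$ as a convolution of the weights of $(1-\zeta)^{\alpha-1}$ with the backward differences of $u$, bound those weights by $(1+m)^{-\alpha}$ via $\ln(1+x)\le x$, invoke $\|u_t(s)\|_{L^2(\Omega)}\le cs^{\alpha-1}$ from \eqref{reg-PDE3}, and reduce everything to the uniform boundedness of $\sum_{j=1}^{n-1}(n-j)^{-\alpha}j^{\alpha-1}$. The only difference is cosmetic: you derive the weight identity by Abel summation rather than by the composition rule for convolution quadrature, and you spell out the splitting argument for the convolution sum that the paper asserts without proof.
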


\begin{proof}
By setting $y(t) = u(t)-u_0$, then we have
\begin{equation*}
 \bar\partial_\tau^\alpha u(t_n) = \bar\partial_\tau^\alpha y(t_n) = \bar\partial_\tau^{\alpha-1} \psi_n = \sum_{j=0}^n \omega_{n-j} \psi_j,%\bar\partial_\tau^{1} y(t_n).
\end{equation*}
where $\psi_n =  \bar\partial_\tau^{1} y(t_n) $, for $n=1,\ldots,N$ and $\psi_0=0$. By the regularity estimate \eqref{reg-PDE3}, we have
\begin{equation*}
  \| \psi_1 \|_{L^2\II} \le \tau^{-1}\|  \int_0^\tau u'(s) \,\d s  \|_{L^2\II} \le   \tau^{-1}\int_0^\tau \| u'(s) \|_{L^2\II} \,\d s
  \le c  \tau^{-1}\int_0^\tau s^{\alpha-1} \,\d s \le c\tau^{\alpha-1}.
\end{equation*}
Meanwhile, for $n\ge2$, we derive that
\begin{equation*}
  \| \psi_n \|_{L^2\II} \le \tau^{-1}\|  \int_{t_{n-1}}^{t_n} u'(s) \,\d s  \|_{L^2\II} \le   \tau^{-1}\int_{t_{n-1}}^{t_n} \| u'(s) \|_{L^2\II} \,\d s
  \le c  \tau^{-1}\int_{t_{n-1}}^{t_n} s^{\alpha-1} \,\d s \le c\tau^{-1} t_{n-1}^{\alpha-1}.
\end{equation*}
Finally, find a bound for the weights $\omega_{n}^{(\alpha-1)} = \Pi_{i=1}^n (1-\frac\alpha{j})$ in \eqref{eqn:alpha-1}.
By the trivial inequality $\ln(1 + x) \le  x$ for $x > -1$, we derive
\begin{equation*}
\begin{aligned}
\ln\omega_{n}^{(\alpha-1)} =  \sum_{j=1}^n \ln\bigg(1 - \frac{\alpha}{j}\bigg)& \leq -\alpha \sum_{j=1}^nj^{-1}
\le  -\alpha \ln (n+1) .
  \end{aligned}
\end{equation*}
which indicates that  for $n\ge 0$
 satisfy the estimate that
\begin{equation*}
 0<\omega_{n}^{(\alpha-1)} < (n+1)^{-\alpha}.
\end{equation*}
Therefore,
\begin{equation*}
  \begin{aligned}
    \|\bar\partial_\tau^\alpha u(t_n)\|_{ L^2(\Omega)} &\le   \tau^{1-\alpha} \sum_{j=2}^n \omega_{n-j}^{(\alpha-1)} \| \psi_j \|_{L^2\II} +  \tau^{1-\alpha} \omega_{n-1}^{(\alpha-1)}\| \psi_1 \|_{L^2\II}\\
    & \le   c \sum_{j=2}^n (n-j+1)^{-\alpha} (j-1)^{\alpha-1} +  c n^{-\alpha} \le c. \\
  \end{aligned}
\end{equation*}
where the constant $c$ is independent of $n$. This completes the proof.
\end{proof}

\begin{theorem}\label{thm:error-wcs}
Suppose that $u_0\in H^2\II\cap H_0^1\II$  and $|u_0| \le 1$. Let $u$ be the solution of the time-fractional Allen-Cahn equation \eqref{eqn:phase-field}.
Then $u_n$, the solution of the WCS scheme \eqref{eqn:wcs}, satisfies
\begin{equation*}
\max_{1\le n\le N}  \|u(t_n)-u_n\|_{L^2(\Omega)}\le c\tau^\alpha .
\end{equation*}
where the constant $c$ may depends on $\alpha, \kappa, T, u_0$ but is independent of $u$ and $\tau$.
\end{theorem}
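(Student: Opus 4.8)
The plan is to follow the template of the proof of Theorem~\ref{thm:error-cs}, reusing the error splitting $u(t_n)-u_n=\theta_n+\rho_n$ from \eqref{eqn:error-split}, where $v_n$ solves the linearized scheme \eqref{eqn:vn-1} with source $f(u(t_n))$. Since \eqref{eqn:vn-1} is the same auxiliary problem used in the CS analysis, the bound $\max_{1\le n\le N}\|\theta_n\|_{L^2(\Omega)}\le c\tau^\alpha$ of Lemma~\ref{lem:cs-01} applies verbatim, so the task reduces to estimating $\rho_n=v_n-u_n$. Subtracting the WCS scheme \eqref{eqn:wcs} from \eqref{eqn:vn-1}, and using $f(u)=u-u^3$, $\rho_0=0$, the increment satisfies the discrete problem
\begin{equation*}
\bar\partial_\tau^\alpha\rho_n+A\rho_n=\big(u(t_n)-u_{n,\alpha}\big)+\big((u_n)^3-u(t_n)^3\big),\qquad \rho_0=0 ,
\end{equation*}
which differs from the CS case only in that the consistency term now carries the fractional extrapolation $u_{n,\alpha}=I_\alpha u_n$ instead of $u_{n-1}$.

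The key new point is to control $u(t_n)-u_{n,\alpha}$ in $\ell^p(L^2(\Omega))$. I would split $u(t_n)-u_{n,\alpha}=\big(u(t_n)-I_\alpha u(t_n)\big)+\big(I_\alpha u(t_n)-I_\alpha u_n\big)$, where $I_\alpha u(t_n)$ is the extrapolation \eqref{eqn:frac-interp} built from the exact solution values. By \eqref{eqn:frac-interp2} the first bracket equals $\tau^\alpha\bar\partial_\tau^\alpha u(t_n)$, and Lemma~\ref{lem:partu} gives $\|(\tau^\alpha\bar\partial_\tau^\alpha u(t_n))_{n=1}^m\|_{\ell^p(L^2(\Omega))}\le \tau^\alpha T^{1/p}\max_n\|\bar\partial_\tau^\alpha u(t_n)\|_{L^2(\Omega)}\le c\tau^\alpha$. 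Since $u(0)=u_0$, linearity of $I_\alpha$ shows the second bracket equals $-\sum_{j=1}^{n-1}\omega_{n-j}\big(u(t_{n-j})-u_{n-j}\big)=-\sum_{j=1}^{n-1}\omega_{n-j}(\theta_{n-j}+\rho_{n-j})$, a discrete convolution of the error with the weights $\{\omega_j\}_{j\ge1}$; because $\sum_{j\ge1}|\omega_j|=1$ (as $\sum_{j\ge0}\omega_j=0$, $\omega_0=1$, $\omega_j<0$ for $j\ge1$), the discrete Young inequality bounds this by $\|(\theta_n)_{n=1}^m\|_{\ell^p(L^2(\Omega))}+\|(\rho_n)_{n=1}^m\|_{\ell^p(L^2(\Omega))}\le c\tau^\alpha+c\|(\rho_n)_{n=1}^m\|_{\ell^p(L^2(\Omega))}$. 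For the cubic term, the discrete maximum principle for the WCS scheme (Theorem~\ref{thm:dmp}) gives $|u_n|\le1$ and the continuous maximum principle (Theorem~\ref{thm:max}) gives $|u(x,t)|\le1$, whence $|(u_n)^3-u(t_n)^3|\le 3\,|u_n-u(t_n)|$ and this term is likewise $\le c\tau^\alpha+c\|(\rho_n)_{n=1}^m\|_{\ell^p(L^2(\Omega))}$.

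Collecting these estimates and invoking the discrete maximal $\ell^p$-regularity of Lemma~\ref{lem:max-reg} yields, for every $0\le m\le N$,
\begin{equation*}
\|(\bar\partial_\tau^\alpha\rho_n)_{n=1}^m\|_{\ell^p(L^2(\Omega))}+\|(A\rho_n)_{n=1}^m\|_{\ell^p(L^2(\Omega))}\le c\tau^\alpha+c\|(\rho_n)_{n=1}^m\|_{\ell^p(L^2(\Omega))} ,
\end{equation*}
and then, choosing $p>1/\alpha$, the discrete fractional Gr\"onwall inequality of Lemma~\ref{lem:disc-Gronwall} gives $\max_{1\le n\le N}\|\rho_n\|_{L^2(\Omega)}\le c\tau^\alpha$. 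Combining with Lemma~\ref{lem:cs-01} and the triangle inequality gives the asserted $O(\tau^\alpha)$ bound. I expect the only genuinely delicate step to be the treatment of $u(t_n)-u_{n,\alpha}$: the solution has merely the limited smoothing $\|u'(s)\|_{L^2(\Omega)}\le cs^{\alpha-1}$ near $t=0$ (Theorem~\ref{THM:Reg}), so one cannot Taylor-expand the extrapolation, and one must rely on the weighted bound for $\bar\partial_\tau^\alpha u(t_n)$ from Lemma~\ref{lem:partu} for the consistency part while absorbing the error-convolution part into the Gr\"onwall argument. The remaining points—verifying the summability constant of the $\{\omega_j\}$ convolution, and running the identical template for the LWS scheme \eqref{eqn:lws} (with the extra stabilization term and the constraint $S+\tau^{-\alpha}\ge2$ needed for its discrete maximum principle)—are routine.
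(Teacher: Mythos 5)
Your proposal is correct and follows essentially the same route as the paper: the same splitting $u(t_n)-u_n=\theta_n+\rho_n$ via the auxiliary scheme \eqref{eqn:vn-1}, the bound $\|\theta_n\|\le c\tau^\alpha$ from Lemma \ref{lem:cs-01}, the decomposition of $u(t_n)-u_{n,\alpha}$ into the consistency part $\tau^\alpha\bar\partial_\tau^\alpha u(t_n)$ (controlled by Lemma \ref{lem:partu}) plus a discrete convolution of the error absorbed by Young's inequality, the maximum principles for the cubic term, and finally discrete maximal $\ell^p$-regularity with the discrete fractional Gr\"onwall inequality for $p>1/\alpha$. Your extra care in verifying $\sum_{j\ge1}|\omega_j|=1$ and in isolating the delicate step is consistent with, and slightly more explicit than, the paper's argument.
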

\begin{proof}
To derive the error, we shall use the splitting \eqref{eqn:error-split} and note that the estimate for $\theta_n$ has been given in the Lemma \eqref{lem:cs-01}.
Then $\rho_n$ satisfies the time stepping problem
\begin{equation*}
\bar\partial_\tau^\alpha \rho_n + A \rho_n
=(u(t_n) - u_{n,\alpha}) + (u(t_n)^3-(u_n)^3) ,\qquad \text{with} ~~\rho_0 = 0.
\end{equation*}
Using the discrete maximal $\ell^p$-regularity in Lemma \ref{lem:max-reg}, we obtain that for all $1<p<\infty$:
\begin{equation*}
  \begin{aligned}
    \|(\bar\partial_\tau^\alpha \rho_n)_{n=1}^m\|_{\ell^p(L^2(\Omega))}
&\le  c\|(u(t_n) - u_{n,\alpha})_{n=1}^m\|_{\ell^p(L^2(\Omega))} + c\|(u(t_n)^3-(u_n)^3)_{n=1}^m\|_{\ell^p(L^2(\Omega))} =:   I_1+I_2.
  \end{aligned}
\end{equation*}
The second term could be bounded using the same argument in Lemma \ref{lem:cs-02}, with the help of the (discrete) maximum principle
and regularity estimate \eqref{reg-PDE3}. In particular, we have
\begin{equation*}
  \begin{aligned}
  I_2 \le  c\tau^\alpha + c\|(\rho_n)_{n=1}^m\|_{\ell^p(L^2(\Omega))}.
  \end{aligned}
\end{equation*}
Now we turn to the first term.
Using the definition of fractional weighted extrapolation \eqref{eqn:frac-interp}, we have
\begin{equation*}
  \begin{aligned}
 I_1 \le  \tau^\alpha \|(\bar \partial_\tau^\alpha u(t_n))_{n=1}^m\|_{\ell^p(L^2(\Omega))} +
 {\Big|\hskip-1.5pt\Big|}\Big(\sum_{i=1}^{n-1} \omega_{n-i}(u_i - u(t_i))\Big)_{n=1}^m {\Big|\hskip-1.5pt\Big|}_{\ell^p(L^2(\Omega))} .
  \end{aligned}
\end{equation*}
Using the Young's inequality of discrete convolution, we arrive at
 \begin{equation*}
  \begin{aligned}
 I_1  \le  \tau^\alpha \|(\bar \partial_\tau^\alpha u(t_n))_{n=1}^m\|_{\ell^p(L^2(\Omega))} + c\|(u_n - u(t_n))_{n=1}^m\|_{\ell^p(L^2(\Omega))}
  \end{aligned}
\end{equation*}
Then Lemma \ref{lem:partu} leads to
 \begin{equation}\label{eqn:esti1}
  \begin{aligned}
 I_1  \le  c \tau^\alpha + c\|(u_n - u(t_n))_{n=1}^m\|_{\ell^p(L^2(\Omega))} \le   c \tau^\alpha + c\|(\rho_n)_{n=1}^m\|_{\ell^p(L^2(\Omega))}
  \end{aligned}
\end{equation}
Combining the preceding two estimates, we arrive at
\begin{equation*}
  \begin{aligned}
    \|(\bar\partial_\tau^\alpha \rho_n)_{n=1}^m\|_{\ell^p(L^2(\Omega))} \le  c\tau^\alpha + c\|(\rho_n)_{n=1}^m\|_{\ell^p(L^2(\Omega))}.
  \end{aligned}
\end{equation*}
By choosing $p>1/\alpha$ and applying the discrete Gr\"onwall's inequality in Lemma \ref{lem:disc-Gronwall}, we obtain
\begin{align*}
\max_{1\le n\le N}\|\rho_n\|_{L^2(\Omega)}\le c\tau^\alpha .
\end{align*}
This completes the proof of the lemma.
\end{proof}

Similar argument may derive the error estimate for the LWS scheme \eqref{eqn:lws} with smooth initial data.
\begin{Corollary}\label{cor:error-lws}
Suppose that $u_0\in H^2\II\cap H_0^1\II$ and $|u_0| \le 1$. Let $u$ be the solution of the time-fractional Allen-Cahn equation \eqref{eqn:phase-field}.
Then $\{u_n\}$, the solution of the LWS scheme \eqref{eqn:lws}, satisfies the error estimate
\begin{equation}\label{error-estimate-lws}
\max_{1\le n\le N}  \|u (t_n)-u_n\|_{L^2(\Omega)}\le c\tau^\alpha .
\end{equation}
where the constant $c$ may depends on $\alpha, \kappa, T, u_0$ but is independent of $u$ and $\tau$.
\end{Corollary}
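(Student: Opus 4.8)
The plan is to mirror the proof of Theorem~\ref{thm:error-wcs} almost verbatim, using the splitting \eqref{eqn:error-split}, $u(t_n)-u_n = \theta_n + \rho_n$, with $v_n$ the solution of the auxiliary linear scheme \eqref{eqn:vn-1}. Since \eqref{eqn:vn-1} does not involve the particular nonlinear scheme, Lemma~\ref{lem:cs-01} already supplies $\max_{1\le n\le N}\|\theta_n\|_{L^2(\Omega)}\le c\tau^\alpha$, so only $\rho_n = v_n - u_n$ needs attention. Subtracting the LWS scheme \eqref{eqn:lws} from \eqref{eqn:vn-1} and using $f(s)=s-s^3$ gives the discrete problem
\[
\bar\partial_\tau^\alpha \rho_n + A\rho_n = \big(u(t_n)-u_{n,\alpha}\big) - \big(u(t_n)^3-(u_{n,\alpha})^3\big) + S\big(u_n-u_{n,\alpha}\big), \qquad \rho_0 = 0 .
\]

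First I would apply the discrete maximal $\ell^p$-regularity, Lemma~\ref{lem:max-reg}, with a fixed $p>1/\alpha$, to bound $\|(\bar\partial_\tau^\alpha\rho_n)_{n=1}^m\|_{\ell^p(L^2(\Omega))}$ by $c(I_1+I_2+I_3)$, where $I_1,I_2,I_3$ are the $\ell^p(L^2(\Omega))$-norms of the three terms on the right. The estimates of $I_1$ and $I_2$ are identical to those in Theorem~\ref{thm:error-wcs}: writing $u(t_n)-u_{n,\alpha} = \tau^\alpha\bar\partial_\tau^\alpha u(t_n) - \sum_{i=1}^{n-1}\omega_{n-i}\big(u(t_i)-u_i\big)$ and invoking Lemma~\ref{lem:partu}, Lemma~\ref{lem:cs-01} and Young's inequality for discrete convolutions yields $I_1 \le c\tau^\alpha + c\|(\rho_n)_{n=1}^m\|_{\ell^p(L^2(\Omega))}$; and since the discrete maximum principle of Theorem~\ref{thm:dmp} forces $|u_{n,\alpha}|\le 1$ (as a convex combination of $u_0,\dots,u_{n-1}$) while Theorem~\ref{thm:max} gives $|u(t_n)|\le 1$, one has $|u(t_n)^3-(u_{n,\alpha})^3|\le 3\,|u(t_n)-u_{n,\alpha}|$, hence $I_2\le 3I_1 \le c\tau^\alpha + c\|(\rho_n)_{n=1}^m\|_{\ell^p(L^2(\Omega))}$. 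For the new term $I_3$ I would simply write $u_n-u_{n,\alpha} = \big(u_n-u(t_n)\big) + \big(u(t_n)-u_{n,\alpha}\big)$, so that $\|(u_n-u_{n,\alpha})_{n=1}^m\|_{\ell^p(L^2(\Omega))}\le \|(\theta_n+\rho_n)_{n=1}^m\|_{\ell^p(L^2(\Omega))} + I_1 \le c\tau^\alpha + c\|(\rho_n)_{n=1}^m\|_{\ell^p(L^2(\Omega))}$, and therefore $I_3 \le c\tau^\alpha + c\|(\rho_n)_{n=1}^m\|_{\ell^p(L^2(\Omega))}$ with a constant $c$ now also depending on the fixed stabilization constant $S$.

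Collecting the three bounds gives $\|(\bar\partial_\tau^\alpha\rho_n)_{n=1}^m\|_{\ell^p(L^2(\Omega))}\le c\tau^\alpha + c\|(\rho_n)_{n=1}^m\|_{\ell^p(L^2(\Omega))}$ for all $m\le N$, whence the discrete fractional Gr\"onwall inequality, Lemma~\ref{lem:disc-Gronwall} (applicable because $p>1/\alpha$ and $\rho_0=0$), yields $\max_{1\le n\le N}\|\rho_n\|_{L^2(\Omega)}\le c\tau^\alpha$. Adding this to the bound on $\theta_n$ and using \eqref{eqn:error-split} finishes the proof.

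The step I expect to require the most care is the treatment of the stabilization term $S(u_n-u_{n,\alpha})$. Since $u_n-u_{n,\alpha}=\tau^\alpha\bar\partial_\tau^\alpha u_n$, a naive estimate would reproduce $\bar\partial_\tau^\alpha\rho_n$ on the right-hand side (forcing an absorption argument valid only for small $\tau$), or would lead one to estimate $\bar\partial_\tau^\alpha\theta_n$, for which only an $L^2$-bound on $\theta_n$---and not on $A\theta_n$---is available from Lemma~\ref{lem:cs-01}. The decomposition $u_n-u_{n,\alpha}=(u_n-u(t_n))+(u(t_n)-u_{n,\alpha})$ circumvents both issues by reducing $I_3$ to quantities already controlled in the analysis of $\theta_n$ and $\rho_n$ and, importantly, bounding it by $\|(\rho_n)_{n=1}^m\|_{\ell^p(L^2(\Omega))}$ rather than by $\|(\bar\partial_\tau^\alpha\rho_n)_{n=1}^m\|_{\ell^p(L^2(\Omega))}$, which is exactly the form required by the Gr\"onwall lemma. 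A final bookkeeping point: the discrete maximum principle used above requires $S+\tau^{-\alpha}\ge 2$, which holds automatically for $\tau$ sufficiently small, so it imposes no restriction on the asymptotic convergence rate.
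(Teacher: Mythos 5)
Your proposal is correct and follows the same architecture as the paper's proof: the splitting \eqref{eqn:error-split} with Lemma \ref{lem:cs-01} handling $\theta_n$, discrete maximal $\ell^p$-regularity (Lemma \ref{lem:max-reg}) applied to the equation for $\rho_n$, the bound \eqref{eqn:esti1} for $u(t_n)-u_{n,\alpha}$ via Lemma \ref{lem:partu} and Young's inequality, the maximum principles (Theorems \ref{thm:max} and \ref{thm:dmp}) to make the cubic term Lipschitz, and finally the discrete fractional Gr\"onwall inequality with $p>1/\alpha$. The one place you genuinely diverge is the stabilization term $S(u_n-u_{n,\alpha})$: the paper writes $u_n-u_{n,\alpha}=\tau^\alpha\bar\partial_\tau^\alpha u_n$, splits off $\tau^\alpha\bar\partial_\tau^\alpha u(t_n)$ (controlled by Lemma \ref{lem:partu}) plus $\tau^\alpha\bar\partial_\tau^\alpha(u(t_n)-u_n)$, and then invokes a ``discrete inverse inequality'' to convert the latter back into $\|(u(t_n)-u_n)_{n=1}^m\|_{\ell^p(L^2(\Omega))}$; you instead decompose $u_n-u_{n,\alpha}=(u_n-u(t_n))+(u(t_n)-u_{n,\alpha})$ and bound each piece by quantities already controlled ($\theta_n$, $\rho_n$, and $I_1$). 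Your route is slightly more economical: it avoids appealing to an inverse-type bound for $\tau^\alpha\bar\partial_\tau^\alpha$ on $\ell^p$, which the paper uses without stating or proving as a lemma (it does hold, since $\tau^\alpha\bar\partial_\tau^\alpha$ is a discrete convolution with absolutely summable weights, so it is again just Young's inequality, but you sidestep the need to argue this). Both routes land on the same inequality $\|(\bar\partial_\tau^\alpha\rho_n)_{n=1}^m\|_{\ell^p(L^2(\Omega))}\le c\tau^\alpha+c\|(\rho_n)_{n=1}^m\|_{\ell^p(L^2(\Omega))}$ and then conclude identically; your closing remark that the constraint $S+\tau^{-\alpha}\ge 2$ needed for the discrete maximum principle is harmless for small $\tau$ is a correct and worthwhile bookkeeping observation that the paper leaves implicit.
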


\begin{proof}
To derive the error, we shall use the usual splitting \eqref{eqn:error-split} and note that the estimate for $\theta_n$ has been given in the Lemma \eqref{lem:cs-01}.
Then $\rho_n$ satisfies the time stepping problem
\begin{equation*}
\bar\partial_\tau^\alpha \rho_n + A \rho_n
= (u(t_n) - u_{n,\alpha}) + (u(t_n)^3-(u_{n,\alpha})^3) + S(u_n - u_{n-\alpha})
\end{equation*}
with $\rho_0 = 0$.
Using the discrete maximal $\ell^p$-regularity in Lemma \ref{lem:max-reg}, as well as the fact that $|u(x,t)| \le 1 $ and $|u_{n,\alpha}(x)| \le 1$ for all $n\ge 0$, we obtain that for all $1<p<\infty$:
\begin{equation*}
  \begin{aligned}
    \|(\bar\partial_\tau^\alpha \rho_n)_{n=1}^m\|_{\ell^p(L^2(\Omega))}
&\le  c\|(u(t_n) - u_{n,\alpha})_{n=1}^m\|_{\ell^p(L^2(\Omega))}   + \| (u_n - u_{n-\alpha})_{n=1}^m\|_{\ell^p(L^2(\Omega))} =:I_1+I_2.
  \end{aligned}
\end{equation*}
The estimate for the first term has been given by \eqref{eqn:esti1}. Now we apply Lemma \ref{lem:partu} to obtain that
\begin{equation*}
  \begin{aligned}
I_2 &= c\| (\bar\partial_\tau^\alpha u_n)_{n=1}^m  \|_{\ell^p(L^2(\Omega))} \\
&\le   c \tau^\alpha \|(\bar\partial_\tau^\alpha u(t_n) )_{n=1}^m\|_{\ell^p(L^2(\Omega))}+
c \tau^\alpha \|(\bar\partial_\tau^\alpha (u(t_n)-u_n))_{n=1}^m\|_{\ell^p(L^2(\Omega))} \\
 & \le c \tau^\alpha + c \tau^\alpha \|(\bar\partial_\tau^\alpha (u(t_n)-u_n))_{n=1}^m\|_{\ell^p(L^2(\Omega))}.
  \end{aligned}
\end{equation*}
Then the discrete (in time) inverse inequality yields that
 \begin{equation*}
  \begin{aligned}
I_2 \le c \tau^\alpha + c  \|( u(t_n)-u_n)_{n=1}^m\|_{\ell^p(L^2(\Omega))} \le c \tau^\alpha + c\|(\rho_n)_{n=1}^m\|_{\ell^p(L^2(\Omega))}.
  \end{aligned}
\end{equation*}
Finally, the preceding estimates together with the discrete Gr\"onwall's inequality in Lemma \ref{lem:disc-Gronwall} result in the desired result.
\end{proof}

\begin{remark}
In this paper, we only present the argument for the homogeneous Dirichlet boundary condition.
In fact, all the argument could be applied to other types of boundary conditions,
e.g., periodic boundary condition and Nuemann boundary condition, since the analysis only depends on
the abstract operator $A$ and its resolvent estimate.
\end{remark}\bigskip

\section{Numerical results}\label{sec:numerics}
In this section, we present some numerical experiments to confirm the theoretical findings and to offer
new insight on the time-fractional Allen-Cahn dynamics.\vskip5pt
\begin{example}\label{Example 5.1.}
In the first example, we use a two-dimensional problem to support the convergence rate of the proposed numerical schemes.
To this end, we let $\Omega=(0,1)^2$ and $\kappa=0.5$, and take the fractional power $\alpha$ to be $0.4$, $0.6$, $0.8$, respectively.
We use the smooth initial condition $u_0(x,y)=x(1-x)y(1-y)$.
\end{example}

In the computation, The central finite difference method is used for the discretization in spatial space  with $h=1/200$ in each direction.
Here we present the error of numerical solution of the CS \eqref{eqn:cs}, WCS \eqref{eqn:wcs} and LWS schemes \eqref{eqn:lws},
in Table \ref{tab:a-time}. Since the exact solution is unknown, to numerically evaluate  pointwise-in-time temporal error $e_t$, we compute
\begin{equation*}
  e_t \approx \max_{1\le n\le N}\|u_\tau^n-u_{\tau/2}^{2n}\|_{L^2(\Omega)}.
\end{equation*}
Numerical experiments show that all the three schemes are convergent with rate  $O(\tau^\alpha)$.
This observation fully supports our theoretical result in Theorems \ref{thm:error-cs} and \ref{thm:error-wcs}, and
Corollary \ref{cor:error-lws}.

\begin{table}[htb!]
\caption{Example \ref{Example 5.1.}: $e_t$
with $T= 1$,  $\tau=T/N$,  $N=k\times10^4$, and $h=1/200$.}\label{tab:a-time}
\begin{center}
\vspace{-.3cm}{\setlength{\tabcolsep}{7pt}
     \begin{tabular}{|c|c|ccccc|c|}
     \hline
     Scheme & $\alpha\backslash k$    &$1 $ &$2 $ & $4 $ & $8 $ &$16$  &rate \\
          \hline
       &  $0.4$             &6.01e-4  &4.50e-4  &3.36e-4  &2.51e-4 &1.87e-4 & $\approx$ 0.42 (0.40)\\
     CS  &  $0.6$           &1.38e-4  &8.78e-5  &5.64e-5  &3.65e-5 &2.37e-5 & $\approx$ 0.63 (0.60)\\
      &  $0.8$              &2.51e-5  &1.26e-5 &6.51e-6 &3.62e-6 &2.04e-6 &  $\approx$ 0.84 (0.80)\\
      \hline
       &  $0.4$           &4.84e-4 &3.76e-4 &2.90e-4 &2.22e-4 &1.70e-4 & $\approx$ 0.38 (0.40)\\
     WCS  &  $0.6$           &1.32e-4 &8.54e-5  &5.54e-5  &3.60e-5 &2.35e-5 & $\approx$ 0.62 (0.60)\\
      &  $0.8$              &3.32e-5  &1.79e-5 &9.72e-6 &5.29e-6 &2.89e-6 &  $\approx$ 0.85 (0.80)\\
      \hline
       &  $0.4$           &5.70e-4 &4.35e-4 &3.29e-4 &2.48e-4 &1.86e-4 & $\approx$ 0.41 (0.40)\\
     LWS  &  $0.6$           &1.70e-4 &1.10e-4  &7.16e-5  &4.67e-5 &3.05e-5 & $\approx$ 0.62 (0.60)\\
      &  $0.8$              &6.71e-5  &3.76e-6 &2.10e-6 &1.18e-6 &6.64e-6 &  $\approx$ 0.83 (0.80)\\
      \hline
     \end{tabular}}
\end{center}
\end{table}

\begin{example}\label{Example 5.2.} Consider the one-dimensional fractional Allen--Cahn equation in $[0,2\pi]$ with zero Dirichlet boundary conditions.
We fix $\kappa=0.1$ and take the fractional power $\alpha$ to be $0.5$, $0.7$, $0.9$, respectively.
The central finite difference method is used for the discretization in spatial space  with $N=2^7$, and the time step $\tau=10^{-2}$. %We test two initial dates
%\begin{itemize}
%\item[(a)] smooth initial value: $u(0,x)=0.05\sin(x)$;
%\item[(b)] random initial value with zero mean: $u(0,x)=0.05(2*\text{rand}(x)-1)$.
%\end{itemize}
Here we test smooth initial value: $u(0,x)=0.05\sin(x)$.
\end{example}
\begin{figure}[!h]
\centering
\includegraphics[width=0.8\textwidth,height=7cm]{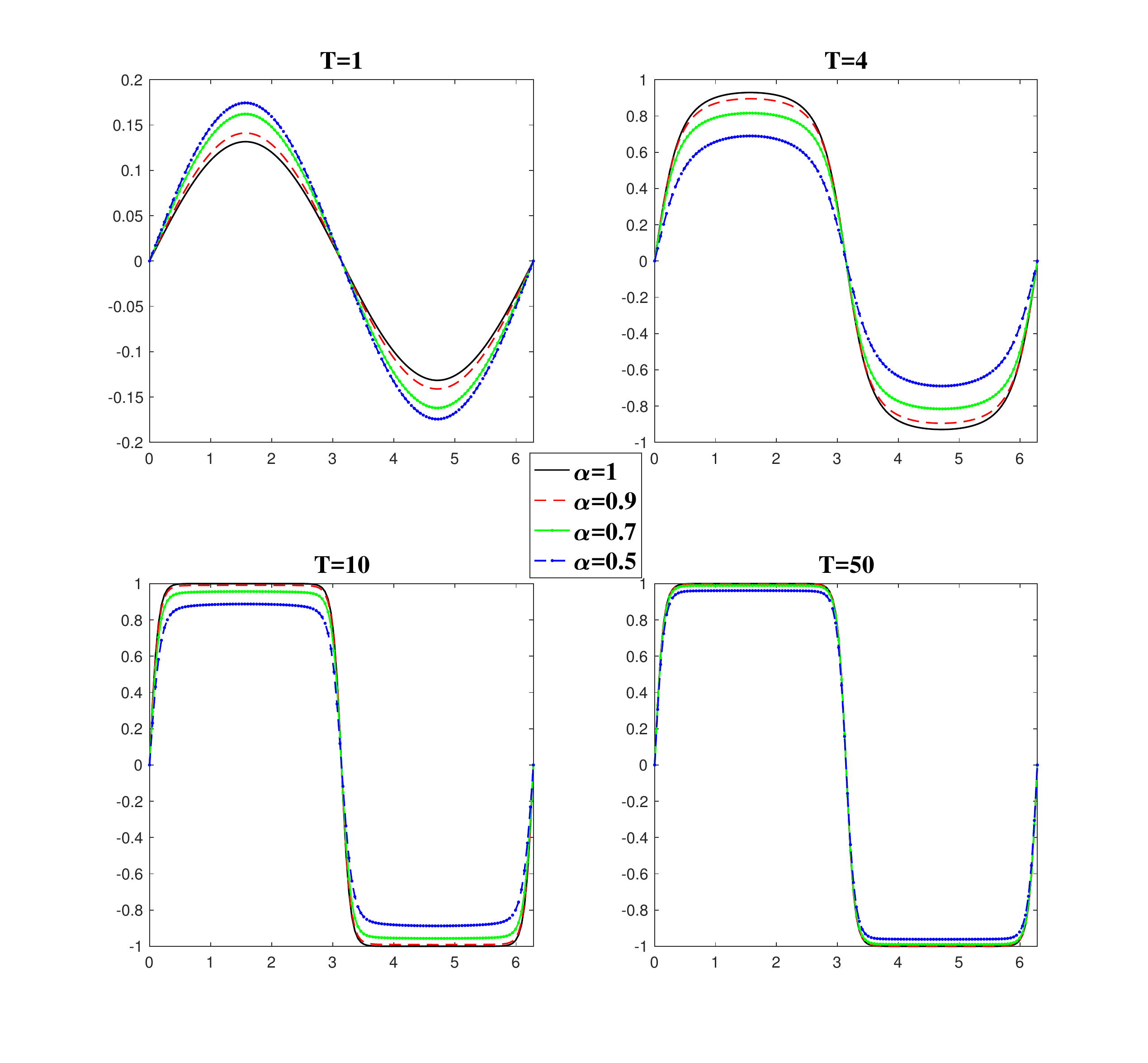}\\
\includegraphics[width=0.8\textwidth,height=7.5cm]{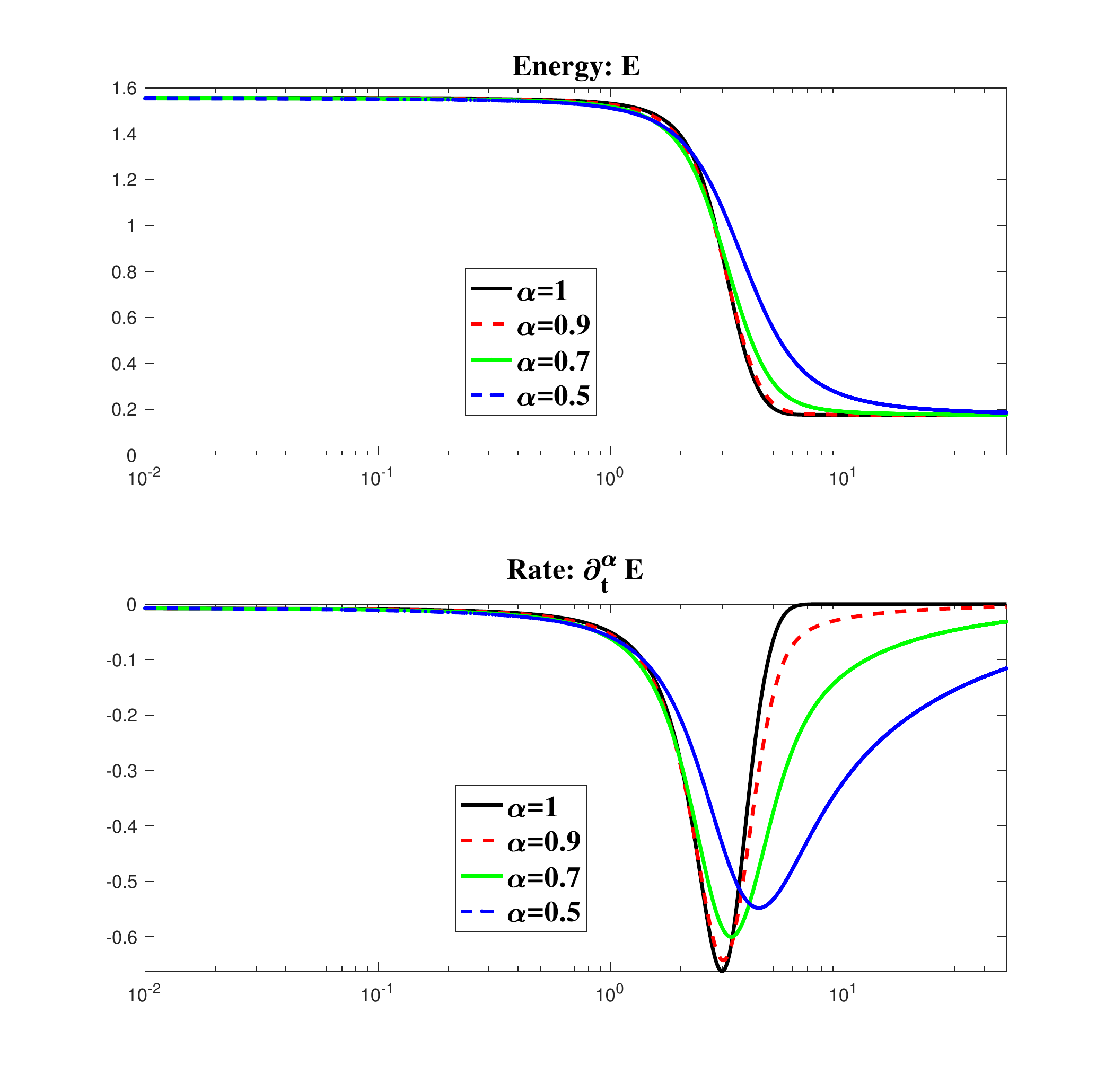}
\caption{(Example \ref{Example 5.2.}:) Above four subfigures: configurations of phase field evolution; below two subfigures: evolutions of energy and fractional derivative of energy.}\label{ex2:1}
\end{figure}

The classical Allen--Cahn equation ($\alpha=1$) is also computed for comparison.
We present the solutions and energy curves in Figure \ref{ex2:1}.
We observe that for time fractional case, the evolution is slower than that of the classical Allen--Cahn.
In particular,
we observe that the dynamic is slower for the smaller $\alpha$, especially for long time, e.g. after $T=10$, where the classical Allen--Cahn
almost arrives at the steady state but the fractional one with $\alpha=0.5$ is far from the steady state.
But in both two cases, the energy decays monotonically, and moreover, the following energy dispassion law
$$\bar{\partial}^\alpha_\tau E(t_n)\le 0,\qquad \text{for all}~~n \ge 1,$$
holds as expected.

\begin{example}\label{Example 5.3.}
Consider the two-dimensional fractional Allen--Cahn equation in $(0,2\pi)^2$ with zero Dirichlet boundary conditions.
We fix $\kappa=0.1$ and take the fractional power $\alpha$ to be $0.5$, $0.7$, $9$, respectively.
The central finite difference method is used for the discretization in spatial space  with $N=2^7$ in each direction, and the time step $\tau=5\times10^{-3}$.
In the experiment, we test the random initial value with zero mean
$u(0,x,y)=0.05(2*\text{rand}(x,y)-1),$
where $\text{rand}(\cdot)$ denotes a random number in $[0,1]$.
\end{example}

\begin{figure}[!h]
\centering
\includegraphics[width=0.8\textwidth,height=16cm]{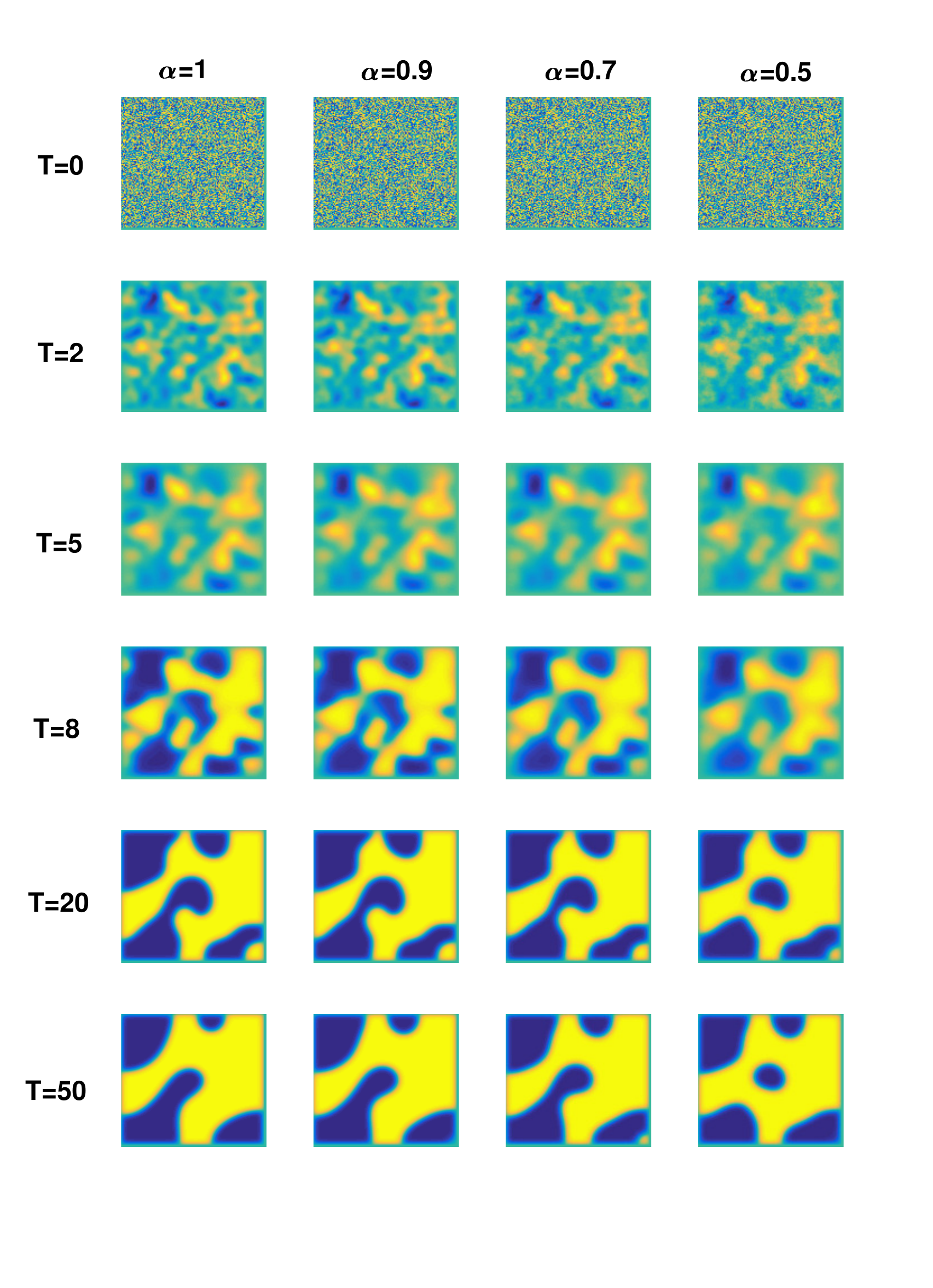}\vspace{-1cm}
\caption{(Example \ref{Example 5.3.}) Configurations of phase field evolution.}\label{ex3:1}
\end{figure}
\begin{figure}[!h]
\centering
\includegraphics[width=0.8\textwidth,height=7.5cm]{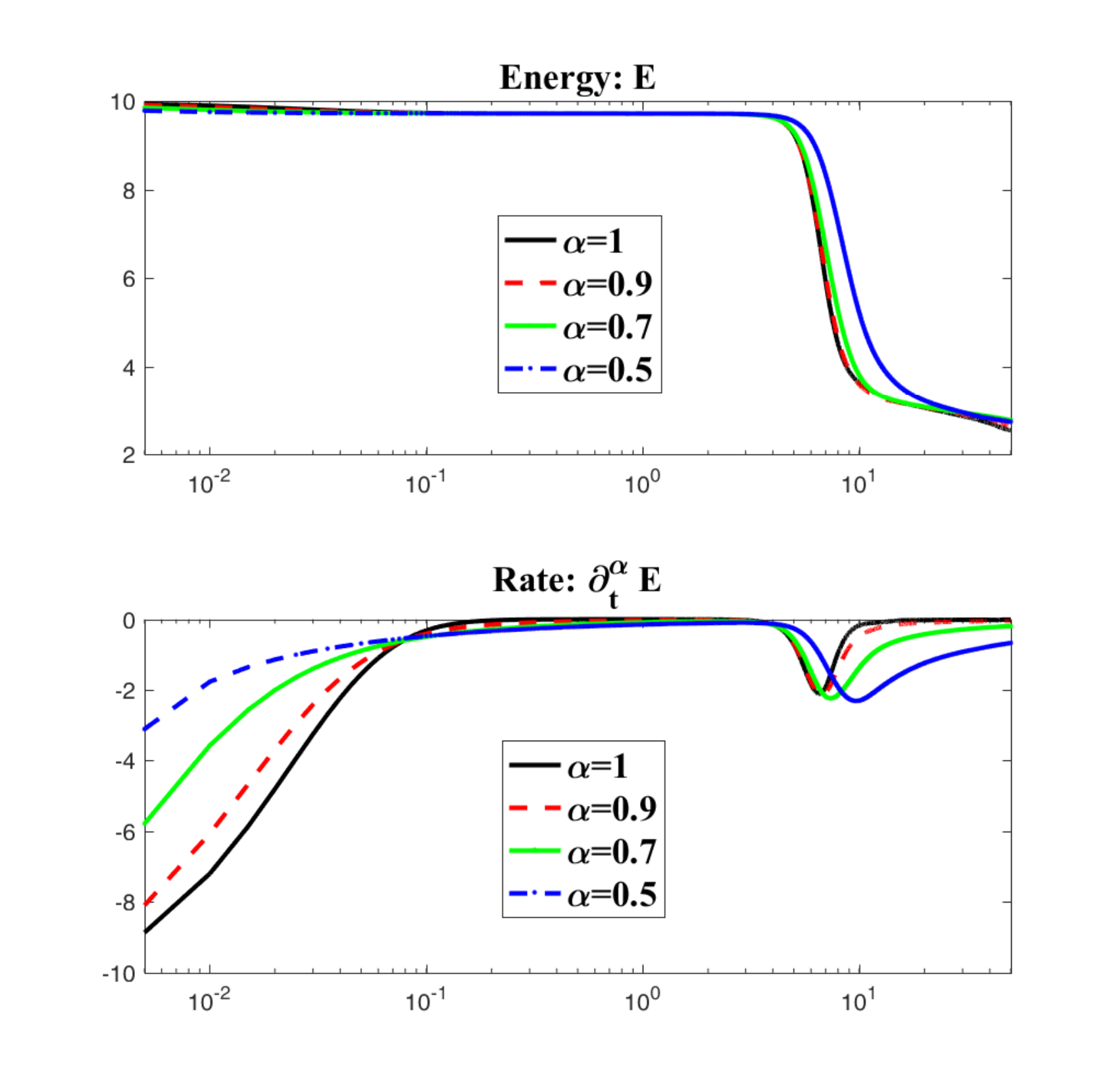}
\caption{(Example \ref{Example 5.3.}) Evolutions of energy and fractional derivative of energy}\label{ex3:2}
\end{figure}

In Figure \ref{ex3:1}, the numerical solutions look almost the same before $T=5$ for all four equations,
which corresponds to the phase separation period.  After that the solutions looking apparently different,
 in this period the dynamics enters the long time scale phase coarsening. This is further verified by the energy evolution.
 Numerically, we can conclude that the equations with different fractional orders have similar dynamics for the phase separation
 but the coarsening dynamics becomes slower for the smaller  fractional orders.
%\red{QD: how to tell the differences in the coarsening dynamics? seems  not so dramatic.}
Again, from the numerical experiments, we observe the fractional energy dispassion law
$$\bar{\partial}^\alpha_\tau E(u_n)\le 0,\qquad \text{for all}~~n \ge 1.$$

\begin{example}\label{Example 5.4.}
It well-known that the classical Allen--Cahn equation will converge to the mean curvature flow.
In this example, we intend to numerically compare the difference between the dynamics of the
integer order AC equation and the ones of the time-fractional Allen--Cahn equation.
To this end, we continue the two-dimensional problem with $\kappa=0.1$. %, and take the order $\alpha$ to be $0.5$, $0.75$ and $1$ in our experiments.
%The central finite difference method is used for the discretization in spatial space with $N=2^8$ in each direction.
We shall test two experiments with  initial data representing different interfaces, i.e., a circle (I) and a dumbbell (II),
and observe how the phase parameter and the energy evolve for different values of $\alpha$.
\end{example}

The results for the first experiment are presented in Figure \ref{ex4:1}, and those for the second one are shown in Figure \ref{ex4:2}, respectively.
In both two tests, it is seen that the relaxation time changes with different $\alpha$.
For the circular case, after a short time for phase reordering, all three circles shrink as time going.
Particularly, the area of the circle in the classical Allen--Cahn equation shrinks linearly,
which essentially illustrate the dynamics of mean curvature flow in the two-dimension.
For the time-fractional Allen--Cahn model, the area shrinking rate seems to exhibit a power-law scaling.
The detailed behavior will not be presented here.
For the dumbbell case, in all three equations it is observed that two balls shrink first while the neck in between keeps flat,
where the curvature is zero. It is reasonable to conjecture that the dynamics of time-fractional AC equation may converge
to some kind of mean curvature flow, at least for the simple structure cases. This will be an interesting topic in our future studies.

\begin{figure}[!h]
\centering
\includegraphics[width=0.8\textwidth,height=7cm]{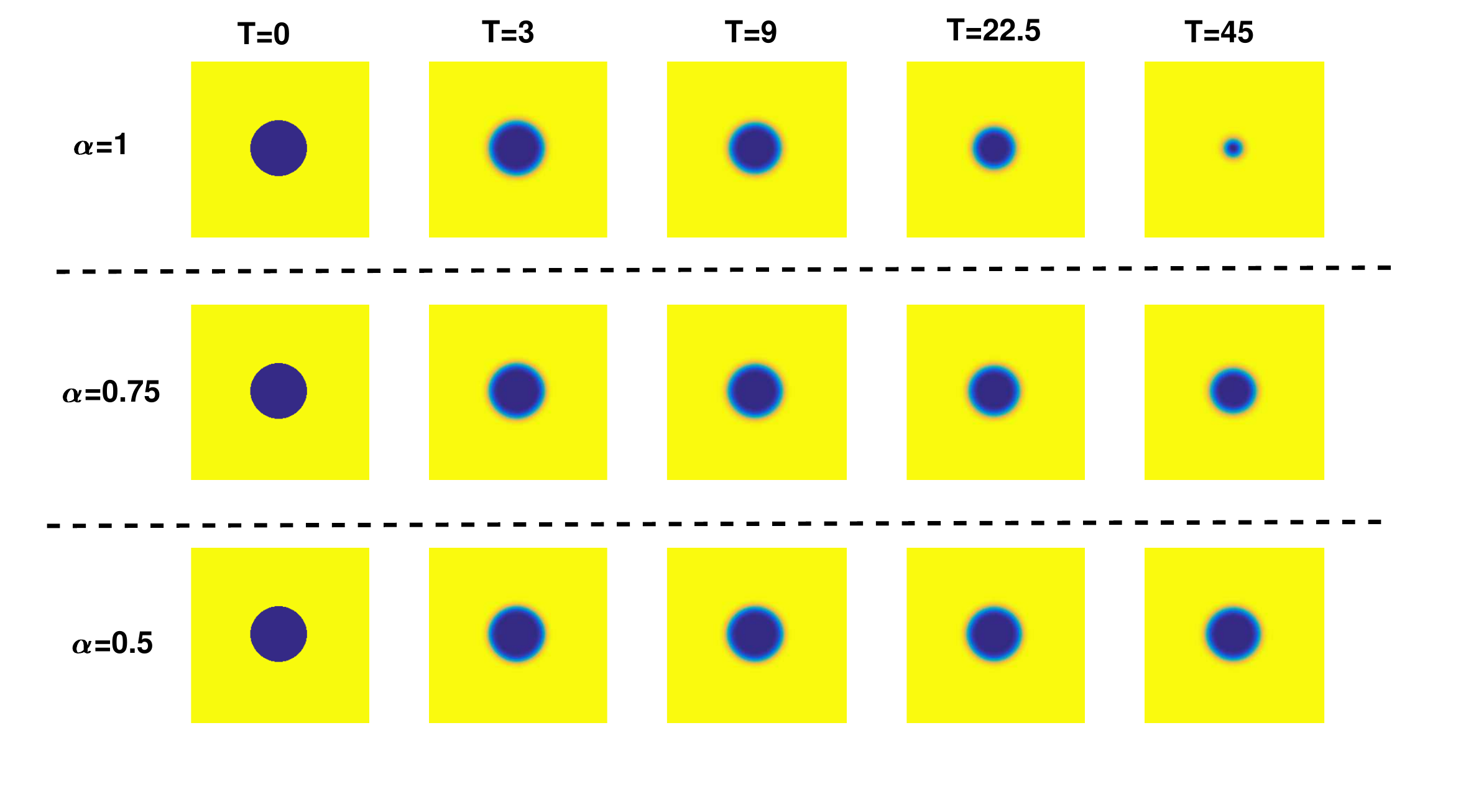}\vspace{0cm}\\
\includegraphics[width=0.8\textwidth,height=5.5cm]{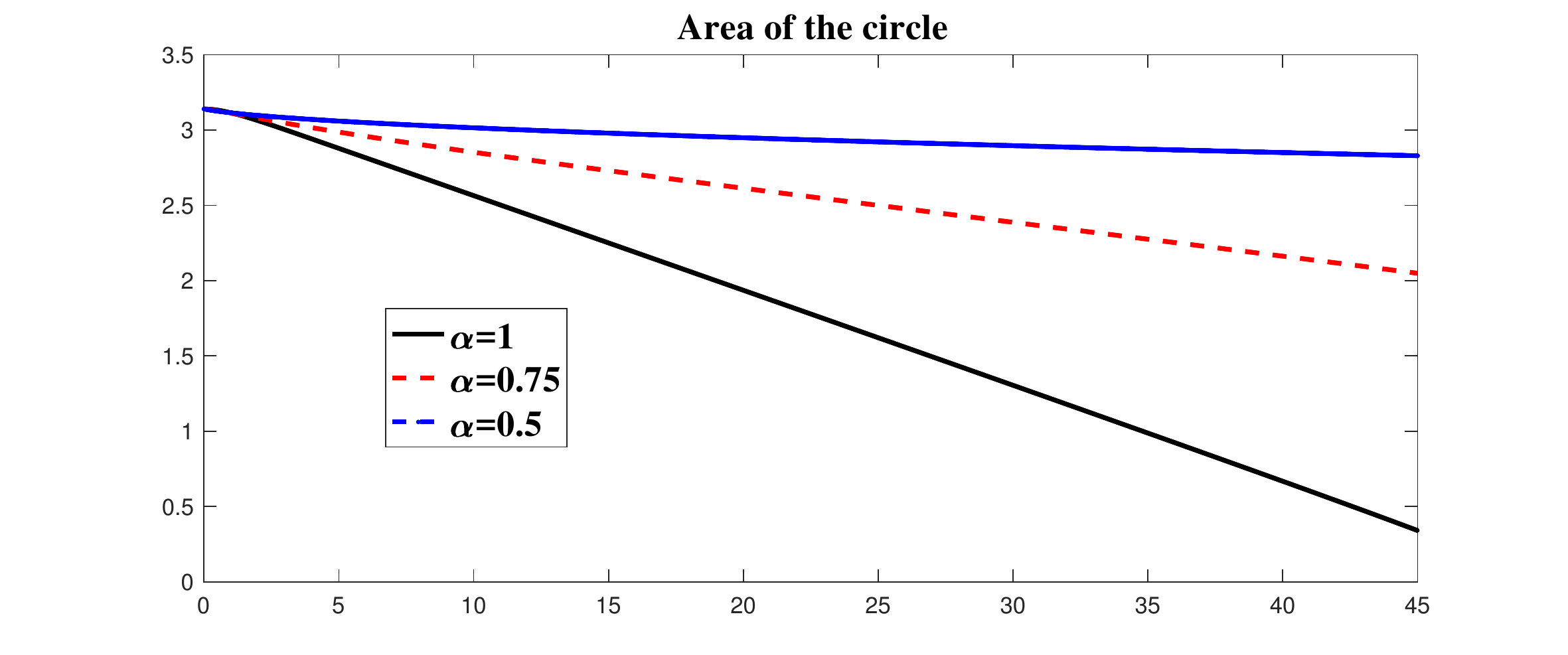}
\caption{(Example \ref{Example 5.4.} (I)) Comparisons between classical AC and factional AC.}\label{ex4:1}
\end{figure}

\begin{figure}[!h]
\centering
\includegraphics[width=0.8\textwidth,height=7cm]{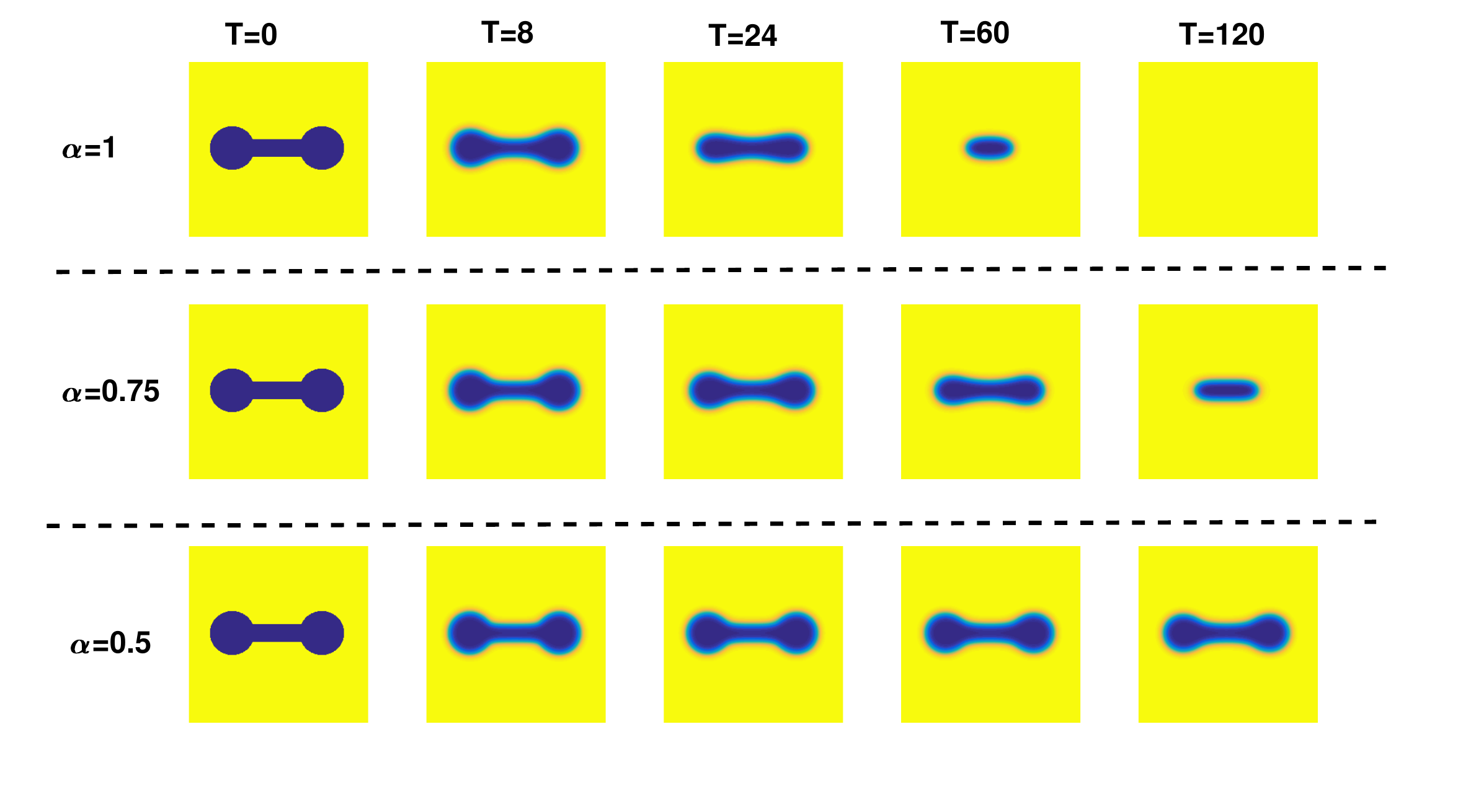}\vspace{0cm}\\
\includegraphics[width=0.8\textwidth,height=5.5cm]{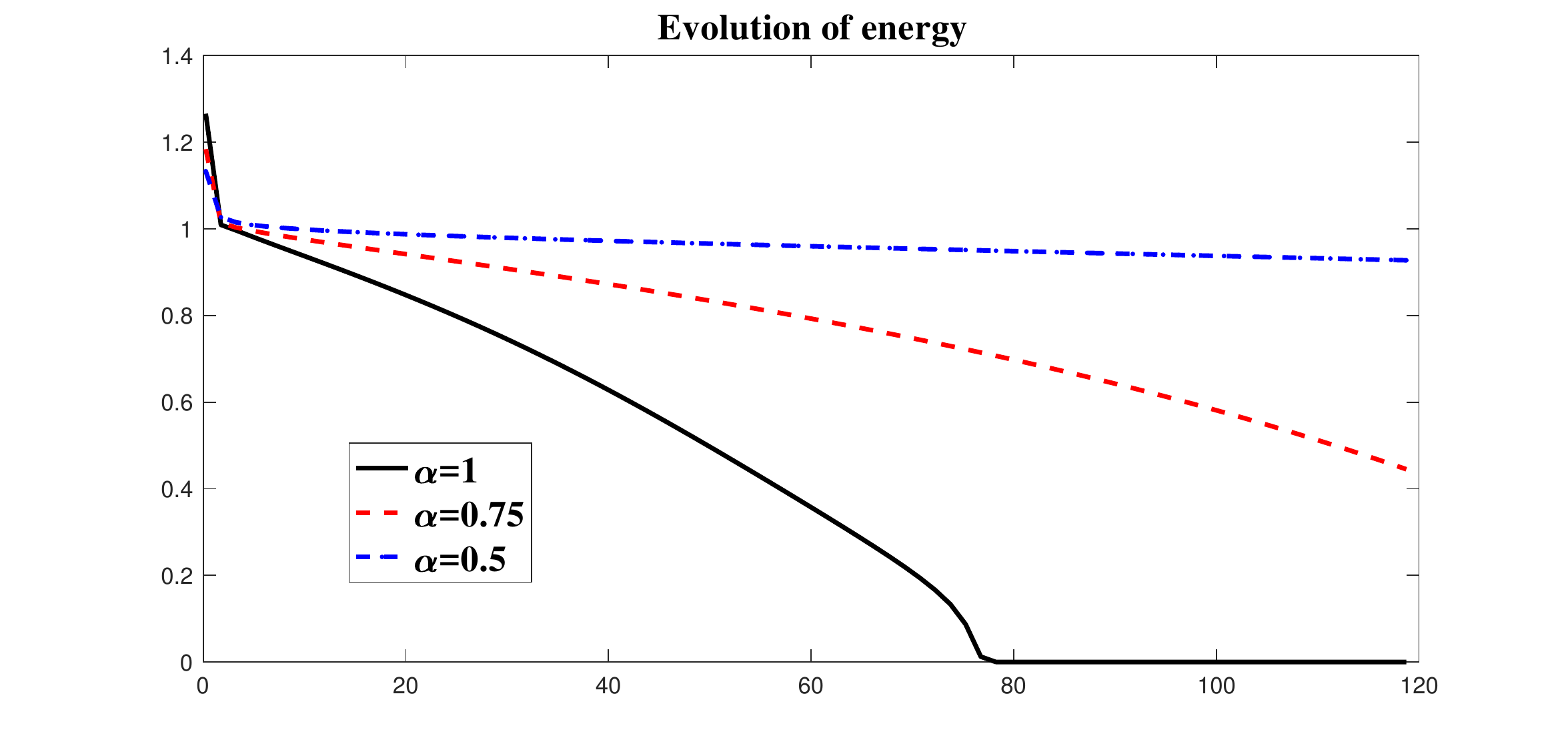}
\caption{(Example \ref{Example 5.4.} (II)) Comparisons between classical AC  and factional AC.}\label{ex4:2}
\end{figure}

\section{conclusion}
In this paper, we study a time-fractional Allen-Cahn equation, where the conventional first-order time-derivative is replaced by a fractional
derivative with order $\alpha\in(0,1)$, resulting in history dependent nonlocal-in-time dynamics.
The well-posedness, solution regularity, and maximum principle are proved, by using some useful tools such as the maximal
$L^p$ regularity of fractional evolution equations and the fractional Gr\"owall's inequality.
Then we developed three unconditionally solvable and stable time stepping schemes, i.e., convex splitting scheme, weighted convex splitting scheme and linear weighted stabilized scheme.
The energy dissipation property (in a weighted average sense) were discussed for the last two time stepping schemes.
Finally, we show the convergence of those time stepping schemes, where the error is of order $O(\tau^\alpha)$ without any extra regularity assumption on the solution.

It is promising to extend our argument to the general nonlocal-in-time phase field model
$$D_\rho u(t) -\kappa^2 \Delta u = -F'(u),$$
with historical initial data, where the nonlocal operator is given by
$$D_\rho u(t) = \int_0^\infty (u(t) - u(t-s))\rho(s)\,\d s,$$
which contains many popular examples, such as Caputo fractional derivative we discussed in this paper, tempered fractional derivative \cite{Sabzikar:2015},
distributed-order derivative \cite{Chechkin:2002}, or nonlocal operator with a finite nonlocal horizon \cite{DuYangZhou:2017},
even though the theoretical analysis of those models remain fairly scarce, and those technical tools, such as (discrete) maximal regularity
and weighted Gr\"onwall's inequality, await rigorous study.
Finally, the simulated dynamics of the nonlocal-in-time Allen-Cahn equations observed in numerical experiments reveal
interesting insight on nonlocal-in-time curvature dependent dynamics, which
awaits further theoretical studies in the future.

\bibliographystyle{abbrv}

\begin{thebibliography}{1}


\bibitem{Ainthworth:2017}
M.~Ainthworth and Z.~Mao.
\newblock Analysis and approximation of a fractional Cahn-Hilliard equation.
\newblock {\em  SIAM J. Numer. Anal.},  55(4), 1689–1718, 2017.

\bibitem{Allen:1979}
S.~M. Allen and J.~W. Cahn.
\newblock A microscopic theory for antiphase boundary motion and its
  application to antiphase domain coarsening.
\newblock {\em Acta Metall.}, 27(6):1085--1095, 1979.

%\bibitem{Alikhanov:2015}
%A.~A. Alikhanov.
%\newblock A new difference scheme for the time fractional diffusion equation.
%\newblock {\em J. Comput. Phys.}, 280:424--438, 2015.

\bibitem{Akagi:2016}
Goro Akagi, Giulio Schimperna, and Antonio Segatti.
\newblock Fractional {C}ahn--{H}illiard, {A}llen--{C}ahn and porous medium
  equations.
\newblock {\em Journal of Differential Equations}, 261(6):2935--2985, 2016.



\bibitem{Anderson:1998}
D.~M. Anderson, G.~B. McFadden, and A.~A. Wheeler.
\newblock Diffuse-interface methods in fluid mechanics.
\newblock {\em Annu. Rev. Fluid Mech.}, 30(1):139--165, 1998.


\bibitem{Antil:2017}
Harbir Antil and S{\"o}ren Bartels.
\newblock Spectral approximation of fractional {PDE}s in image processing and
  phase field modeling.
\newblock {\em Computational Methods in Applied Mathematics}, 17(4):661--678,
  2017.

\bibitem{Bates:2006}
P.~W Bates.
\newblock On some nonlocal evolution equations arising in materials science.
\newblock {\em Nonlinear dynamics and evolution equations}, 48:13--52, 2006.

\bibitem{Bates:2009}
P.~W Bates, Sarah Brown, and Jianlong Han.
\newblock Numerical analysis for a nonlocal {A}llen-{C}ahn equation.
\newblock {\em Int. J. Numer. Anal. Model}, 6(1):33--49, 2009.



%\bibitem{ABHN}
%W.~Arendt, C.~J. Batty, M.~Hieber, and F.~Neubrander.
%\newblock Vector-valued Laplace Transforms and Cauchy Problems.
%\newblock {\em  Birkh\"auser, Basel}, 2nd~ed., 2011.

%\bibitem{Bazhlekova:2015}
%E.~Bazhlekova, B.~Jin, R.~Lazarov, and Z.~Zhou.
%\newblock An analysis of the Rayleigh--Stokes problem for a generalized
%  second-grade fluid.
%\newblock {\em Numer. Math.}, 131(1):1--31, 2015.
%
%\bibitem{BramblePasciakSammonThomee:1989}
%J.~H. Bramble, J.~E. Pasciak, P.~H. Sammon, and V.~Thom\'ee.
%\newblock Incomplete iterations in multistep backward difference methods for
%  parabolic problems with smooth and nonsmooth data.
%\newblock {\em Math. Comp.}, 52(186):339--367, 1989.
%
%\bibitem{BrambleSammon:1980}
%J.~H. Bramble and P.~H. Sammon.
%\newblock Efficient higher order single step methods for parabolic problems.
%  {I}.
%\newblock {\em Math. Comp.}, 35(151):655--677, 1980.



\bibitem{Caffarelli:2010}
L.~Caffarelli, J.-M.~Roquejoffre, and O.~Savin.
\newblock Nonlocal Minimal Surfaces.
\newblock {\em Commun. Pure Appl. Math.}, 63(9):1111--1144, 2010.

\bibitem{Cahn:1958}
J.~W. Cahn and J.~E. Hilliard.
\newblock Free energy of a nonuniform system. {I}. interfacial free energy.
\newblock {\em  J. Chem. Phys.}, 28(2):258--267, 1958.


\bibitem{Clarke:1987}
S.~Clarke and D.~D. Vvedensky.
\newblock Origin of reflection high-energy electron-diffraction intensity
  oscillations during molecular-beam epitaxy: A computational modeling
  approach.
\newblock {\em Phys. Rev. Lett.}, 58(21):2235, 1987.

%
\bibitem{Chechkin:2002}
A.~V. Chechkin, R.~Gorenflo, I.~M. Sokolov.
\newblock Retarding subdiffusion and accelerating superdiffusion governed by distributed-order fractional diffusion equations.
\newblock {\em Phys. Rev. E}, 66:046129, 2002.

%%
%\bibitem{Chen:2018}
%S.~Chen and J.~Shen.
%\newblock  Log orthogonal functions: their approximation properties and applications to fractional differential equations.
%\newblock {\em preprint}, 2018.
%
%\bibitem{ChenShenWang:2016}
%S.~Chen, J.~Shen, and L.-L. Wang.
%\newblock Generalized {J}acobi functions and their applications to fractional
%  differential equations.
%\newblock {\em Math. Comp.}, 85(300):1603--1638, 2016.

%\bibitem{ChengNakagawaYamamotoYamazaki:2009}
%J.~Cheng, J.~Nakagawa, M.~Yamamoto, and T.~Yamazaki.
%\newblock Uniqueness in an inverse problem for a one-dimensional fractional
%  diffusion equation.
%\newblock {\em Inverse Problems}, 25(11):115002, 2009.

\bibitem{ChenZhang:2018}
L.~Chen, J.~Zhao, W.~Cao, H.~Wang, and J.~Zhang.
\newblock  An accurate and efficient algorithm for the time-fractional molecular beam epitaxy model with slope selection.
\newblock {\em preprint arXiv:1803.01963}, 2018.


%\bibitem{CuestaLubichPalencia:2006}
%E.~Cuesta, C.~Lubich, and C.~Palencia.
%\newblock Convolution quadrature time discretization of fractional
%  diffusion-wave equations.
%\newblock {\em Math. Comp.}, 75(254):673--696, 2006.

%\bibitem{DouglasDupontEwing:1979}
%J.~Douglas, Jr., T.~Dupont, and R.~E. Ewing.
%\newblock Incomplete iteration for time-stepping a {G}alerkin method for a
%  quasilinear parabolic problem.
%\newblock {\em SIAM J. Numer. Anal.}, 16(3):503--522, 1979.


\bibitem{Du:2019}
Q.~Du,
\newblock Nonlocal Modeling, Analysis and Computation, CBMS-NSF regional conference series, Vol 94, SIAM, 2019.

\bibitem{DF:2019}
Q.~Du and X.-B.~Feng,
\newblock The phase field method for geometric moving interfaces and their numerical approximations.
\newblock arXiv preprint arXiv:1902.04924, 2019.

\bibitem{DLLZ:2018}
Q.~Du, L.~Ju, X.~Li, and Z.~Qiao.
\newblock Stabilized linear semi-implicit schemes for the nonlocal
  {C}ahn--{H}illiard equation.
\newblock {\em Journal of Computational Physics}, 363:39--54, 2018.

\bibitem{DLLZ:2019}
Q.~Du, L.~Ju, X.~Li, and Z.~Qiao.
\newblock Maximum principle preserving exponential time differencing schemes for the nonlocal {A}llen-{C}ahn
equation.
\newblock {\em SIAM J. Numer. Anal.}, 2019.



\bibitem{Du:2004}
Q.~Du, C.~Liu, and X.~Wang.
\newblock A phase field approach in the numerical study of the elastic bending
  energy for vesicle membranes.
\newblock {\em J. Comput. Phys.}, 198(2):450--468, 2004.



\bibitem{DuYangZhou:2017}
Q.~Du, J.~Yang, and Z.~Zhou.
\newblock Analysis of a nonlocal-in-time parabolic equation.
\newblock {\em Discrete Contin. Dyn. Syst. Ser. B},
  22(2):339--368, 2017.

\bibitem{DuYang:2016}
Q.~Du and J.~Yang.
\newblock Asymptotically compatible Fourier spectral approximations of nonlocal Allen-Cahn equations.
 \newblock {\em  SIAM J. Numer. Anal.}, 54(3): 1899--1919, 2016.





\bibitem{golding2006physical}
I.~Golding and E.~C. Cox.
\newblock Physical nature of bacterial cytoplasm.
\newblock {\em Phys. Rev. Lett.}, 96(9):098102, 2006.



%
%\bibitem{Fischer:2018}
%M.~Fischer.
%\newblock Fast and Parallel Runge-Kutta Approximation of Fractional Evolution Equations
%\newblock {\em arXiv preprint arXiv:1803.05335},  2018.


\bibitem{Guan:2014}
Z.~Guan, J.~Lowengrub, C.~Wang, and S.~Wise.
\newblock Second order convex splitting schemes for periodic nonlocal
  {C}ahn--{H}illiard and {A}llen--{C}ahn equations.
\newblock {\em Journal of Computational Physics}, 277:48--71, 2014.

\bibitem{Gui:2015}
C.~Gui and M.~Zhao.
\newblock Traveling wave solutions of {A}llen--{C}ahn equation with a
  fractional laplacian.
\newblock {\em Annales de l'Institut Henri Poincare (C) Non Linear Analysis},
  32(4):785--812, 2015.

\bibitem{Landau:2013}
Lev~Davidovich Landau and Evgenii~Mikhailovich Lifshitz.
\newblock {\em Course of theoretical physics}.
\newblock Elsevier, 2013.

%\bibitem{JinLazarovZhou:L1}
%B.~Jin, R.~Lazarov, and Z.~Zhou.
%\newblock An analysis of the {L}1 scheme for the subdiffusion equation with
%  nonsmooth data.
%\newblock {\em IMA J. Numer. Anal.}, 36(1):197--221, 2016.

\bibitem{JinLazarovZhou:SISC2016}
B.~Jin, R.~Lazarov, and Z.~Zhou.
\newblock Two fully discrete schemes for fractional diffusion and
  diffusion-wave equations with nonsmooth data.
\newblock {\em SIAM J. Sci. Comput.}, 38(1):A146--A170, 2016.
%
%\bibitem{JinLiZhou:CN}
%B.~Jin, B.~Li, and Z.~Zhou.
%\newblock An analysis of the {C}rank--{N}icolson method for subdiffusion.
%\newblock {\em IMA J. Numer. Anal.}, 38(1):518-541, 2017.
%
%\bibitem{JinLiZhou:correction}
%B.~Jin, B.~Li, and Z.~Zhou.
%\newblock Correction of high-order {BDF} convolution quadrature for fractional
%  evolution equations.
%\newblock {\em SIAM J. Sci. Comput.}, 39(6): A3129--A3152, 2017.

\bibitem{JinLiZhou:max-reg}
B.~Jin, B.~Li, and Z.~Zhou.
\newblock Discrete maximal regularity of time-stepping schemes for fractional
  evolution equations.
\newblock {\em Numer. Math.}, 138(1): 101--131, 2018.



\bibitem{JinLiZhou:nonlinear}
B.~Jin, B.~Li, and Z.~Zhou.
\newblock Numerical analysis of nonlinear subdiffusion equations.
\newblock {\em SIAM J. Numer. Anal.},  56(1): 1--23,  2018.


\bibitem{JinLiZhou:var}
B.~Jin, B.~Li, and Z.~Zhou.
\newblock Numerical analysis for the subdiffusion equation with a time-dependent coefficient.
\newblock {\em Math. Comp.},  in print. arXiv:1809.07583.

%\bibitem{JinLiZhou:control}
%B.~Jin, B.~Li, and Z.~Zhou.
%\newblock Pointwise-in-time error estimates for an optimal control problem with
%  subdiffusion constraint.
%\newblock {\em IMA J. Numer. Anal., } accepted. Preprint in arXiv:1707.08808.


%\bibitem{Karaa:2018}
%S.~Karaa.
%\newblock Semidiscrete finite element analysis of time fractional parabolic
%  problems: a unified approach.
%\newblock {\em SIAM J. Numer. Anal.}, 56(3):1673--1692, 2018.


%\bibitem{KaraalMustaphaPani:2017}
%S.~Karaa, K.~Mustapha, and A.~K. Pani.
%\newblock Optimal error analysis of a {FEM} for fractional diffusion problems by
%  energy arguments.
%\newblock {\em J. Sci. Comput.}, in press, 2017. DOI: 10.1007/s10915-017-0450-7.

\bibitem{KilbasSrivastavaTrujillo:2006}
A.~A. Kilbas, H.~M. Srivastava, and J.~J. Trujillo.
\newblock {\em Theory and {A}pplications of {F}ractional {D}ifferential
  {E}quations}.
\newblock Elsevier Science B.V., Amsterdam, 2006.

%\bibitem{Mustapha:SINUM2014}
%K.~Mustapha, B.~Abdallah and K.~Furati,
%\newblock A discontinuous {P}etrov--{G}alerkin method for time-fractional diffusion equations.
%\newblock {\em SIAM J. Numer. Anal.}, 52(5):2512--2529, 2014.



%\bibitem{Keeling:1989}
%S.~L. Keeling.
%\newblock Galerkin/{R}unge-{K}utta discretizations for parabolic equations with
%  time-dependent coefficients.
%\newblock {\em Math. Comp.}, 52(186):561--586, 1989.


%\bibitem{Kilbas:2006}
%A.~A.~A. Kilbas, H.~M. Srivastava, and J.~J. Trujillo.
%\newblock {\em Theory And Applications of Fractional Differential Equations},
%  volume 204.
%\newblock Elsevier Science Limited, 2006.

\bibitem{kirchner2000fractal}
J.~W. Kirchner, X.~Feng, and C.~Neal.
\newblock Fractal stream chemistry and its implications for contaminant
  transport in catchments.
\newblock {\em Nature}, 403(6769):524, 2000.








\bibitem{Liu:2018}
H.~Liu, A.~Cheng, H.~Wang and J.~Zhao.
\newblock Time-fractional Allen-Cahn and Cahn-Hilliard phase-field models and their numerical investigation.
\newblock {\em Comput. Math. Appl.}, 76(8):1876--1892.




\bibitem{LinXu:2007}
Y.~Lin and C.~Xu.
\newblock Finite difference/spectral approximations for the time-fractional
  diffusion equation.
\newblock {\em J. Comput. Phys.}, 225(2):1533--1552, 2007.






\bibitem{Liu:2003}
C.~Liu and J.~Shen.
\newblock A phase field model for the mixture of two incompressible fluids and
  its approximation by a fourier-spectral method.
\newblock {\em Physica D}, 179(3-4):211--228, 2003.


\bibitem{Lubich:1988}
C.~Lubich.
\newblock Convolution quadrature and discretized operational calculus {I}.
\newblock {\em Numer. Math.}, 52(2): 129--145, 1988.

\bibitem{Mustapha:2014}
K.~Mustapha, B.~Abdallah, and K. M.~Furati.
\newblock A discontinuous {P}etrov-{G}alerkin method for time-fractional diffusion equations.
\newblock {\em SIAM J. Numer. Anal.}, 52(5): 2512--2529, 2014.

 


\bibitem{nigmatullin1986realization}
R.~Nigmatullin.
\newblock The realization of the generalized transfer equation in a medium with
  fractal geometry.
\newblock {\em Physica Status Solidi (b)}, 133(1):425--430, 1986.

\bibitem{OldhamSpanier:1974}
K.~B. Oldham and J.~Spanier.
\newblock {\em The {F}ractional {C}alculus}.
\newblock Academic Press, New York, 1974.


\bibitem{Paragios:2001}
N.~Paragios, O.~Mellina-Gottardo, and V.~Ramesh.
\newblock Gradient vector flow fast geodesic active contours.
\newblock In {\em Proceedings Eighth IEEE International Conference on Computer
  Vision. ICCV 2001}, volume~1, pages 67--73. IEEE, 2001.


\bibitem{Pismen:2001}
LM~Pismen.
\newblock Nonlocal diffuse interface theory of thin films and the moving
  contact line.
\newblock {\em Physical Review E}, 64(2):021603, 2001.




\bibitem{Qian:2003}
T.~Qian, X.-P. Wang, and P.~Sheng.
\newblock Molecular scale contact line hydrodynamics of immiscible flows.
\newblock {\em Phys. Rev. E}, 68(1):016306, 2003.








\bibitem{Sabzikar:2015}
F.~Sabzikar, M. M.~Meerschaert, and J.~Chen.
\newblock Tempered fractional calculus.
\newblock {\em J. Comput. Phys.}, 293:14--28, 2015.




 \bibitem{Shao:2010}
D.~Shao, W. J. Rappel, and H.~Levine.
\newblock Computational model for cell morphodynamics.
\newblock {\em Phys. Rev. Lett.}, 105(10):108104, 2010.


\bibitem{STY1}
J. Shen, T. Tang and J. Yang.
\newblock On the maximum principle preserving schemes for the generalized Allen-Cahn equation.
\newblock {\em Comm. Math. Sci.}, 14(6):1517--1534, 2016.




 \bibitem{Song:2017}
 F.~Song, C.~Xu and G.~Karniadakis.
\newblock A fractional phase-field model for two-phase flows with tunable sharpness: Algorithms and simulations.
\newblock {\em Comput Methods Appl Mech Eng.}, 305: 376--404, 2016.



\bibitem{SunWu:2006}
Z.-Z. Sun and X.~Wu.
\newblock A fully discrete scheme for a diffusion wave system.
\newblock {\em Appl. Numer. Math.}, 56(2):193--209, 2006.


\bibitem{TY1}
T. Tang and J. Yang.
\newblock Implicit-explicit scheme for the Allen-Cahn equation preserves the maximum principle.
\newblock {\em J. Comput. Math.}, 34(5):471--481, 2016.

\bibitem{Tang:2018}
T.~Tang, H.~Yu and T.~Zhou.
\newblock  On energy dissipation theory and numerical stability for time-fractional phase field equations.
\newblock {\em preprint arXiv:1808.01471}, 2018.



\bibitem{Valdinoci:2013}
Enrico Valdinoci.
\newblock A fractional framework for perimeters and phase transitions.
\newblock {\em Milan Journal of Mathematics}, 81(1):1--23, 2013.

\bibitem{Waals:1894}
JD~Van~der Waals.
\newblock Thermodynamic theory of capillarity assuming steady density change.
\newblock {\em Journal of Physical Chemistry}, 13(1):657--725, 1894.




\bibitem{Wise:2008}
S.~M. Wise, J.~S. Lowengrub, H.~B. Frieboes, and V.~Cristini.
\newblock Three-dimensional multispecies nonlinear tumor growth I: model and
  numerical method.
\newblock {\em J. Theor. Biol.}, 253(3):524--543, 2008.

\bibitem{Xu:1998}
C.~Xu and J.~L. Prince.
\newblock Snakes, shapes, and gradient vector flow.
\newblock {\em IEEE Trans. Image Process.}, 7(3):359--369, 1998.



\end{thebibliography}

\end{document}